%% file: main.tex
\documentclass{article}
\DeclareUnicodeCharacter{FF0C}{,}

\usepackage[margin=1in]{geometry}  

\usepackage{listings}
\usepackage{amsmath,mathtools}
\usepackage{amsthm}
\usepackage{tikz}
\usepackage{caption,subcaption}
\usepackage{array}
\usepackage{mdwmath}
\usepackage{multirow}
\usepackage{mdwtab}
\usepackage{eqparbox}
\usepackage{multicol}
\usepackage{amsfonts}
\usepackage{multirow,bigstrut,threeparttable}
\usepackage{amsthm}
\usepackage{array}
\usepackage{bbm}
\usepackage{epstopdf}
\usepackage{mdwmath}
\usepackage{mdwtab}
\usepackage{eqparbox}
\usepackage{latexsym}
\usepackage{amssymb}
\usepackage{bm}
\usepackage{amssymb}
\usepackage{graphicx}

\usepackage{mathrsfs}
\usepackage{epsfig}
\usepackage{psfrag}
\usepackage{setspace}
\usepackage{tikz-cd}

\usepackage{tcolorbox}

\usepackage[
CJKbookmarks=true,
bookmarksnumbered=true,
bookmarksopen=true,
colorlinks=true,
citecolor=red,
linkcolor=blue,
anchorcolor=red,
urlcolor=blue
]{hyperref}
\usepackage{cleveref}
\usepackage[ruled]{algorithm2e}
\usepackage{stfloats}

\usepackage[backend=biber, style=numeric, giveninits=true, sorting=nyt]{biblatex} 

\DeclareNameAlias{author}{family-given}
\usepackage{nameref}
\usepackage[normalem]{ulem}

\usepackage{soul}

\input xy
\xyoption{all}

\theoremstyle{plain}
\newtheorem{theorem}{Theorem}[section]
\newtheorem{proposition}{Proposition}[section]
\newtheorem{lemma}{Lemma}[section]

\newtheorem{assumption}{Assumption}[section]

\crefname{theorem}{Theorem}{Theorems}
\crefname{proposition}{Proposition}{Propositions}
\crefname{corollary}{Corollary}{Corollaries}

\theoremstyle{definition}
\newtheorem{definition}{Definition}[section]
\newtheorem{remark}{Remark}[section]

\crefname{definition}{Definition}{Definitions}
\crefname{remark}{Remark}{Remarks}

\def\EE{\mathbb{E}}

\def\PP{P}
\def\QQ{Q}
\def\RR{\mathbb{R}}

\def\calB{\mathcal{B}}
\def\calC{\mathcal{C}}

\def\calI{\mathcal{I}}

\def\calK{\mathcal{K}}

\def\calN{\mathcal{N}}

\def\calP{\mathcal{P}}

\def\calS{\mathcal{S}}

\def\calX{\mathcal{X}}
\def\calY{\mathcal{Y}}
\def\calZ{\mathcal{Z}}

\newcommand{\cc}{\textup{c}}

\newcommand{\Let}{\triangleq}

\newcommand{\trpose[1]}{\leftexp{t}{#1}}

\def\1{\mathbbm{1}}
\def\var{\mathsf{Var}}

\usepackage{booktabs}
\usepackage{adjustbox}
\usepackage{multirow}
\usepackage{listings}

\theoremstyle{plain}

\def \var {\mathsf{Var}}

\usepackage{xspace}

\newcommand{\diff}{{\rm d}}

\definecolor{myblue}{rgb}{.8, .8, 1}
\definecolor{mathblue}{rgb}{0.2472, 0.24, 0.6} 
\definecolor{mathred}{rgb}{0.6, 0.24, 0.442893}
\definecolor{mathyellow}{rgb}{0.6, 0.547014, 0.24}

\usepackage{enumitem}

\usepackage{accents}

\input{sty}
\usepackage{dsfont}
\usepackage{authblk} 

\usepackage{pgfplots}
\pgfplotsset{compat=1.17}
\usepackage[T1]{fontenc} 

\newcommand{\Cwass}{{CW}}
\renewcommand{\Let}{\triangleq}
\usepackage{wrapfig}

\title{Statistical Inference in Causal Partial Identification \\ with Smooth Densities}

\author[1]{Sirui Lin}
\author[2]{Zijun Gao\thanks{The first two authors contributed equally to this work.}}
\author[1]{Jos\'{e} Blanchet}
\author[1]{Peter Glynn\thanks{Emails: \texttt{zijungao@marshall.usc.edu,  siruilin, jose.blanchet, glynn@stanford.edu}}}

\affil[1]{Department of Management Science and Engineering, Stanford University}
\affil[2]{Marshall School of Business, University of Southern California}

\begin{document}
\maketitle
\begin{abstract}
Many causal quantities are only partially identifiable due to the inherent missingness of potential outcomes, and the associated partial identification (PI) sets can be obtained by solving an optimal transport (OT) problem.
Covariates often provide additional information about the potential outcomes and thus yield tighter PI sets, which can be obtained via conditional optimal transport (COT).
However, COT-based PI set estimators are susceptible to the curse of dimensionality in the covariates and outcomes, which precludes the asymptotic normality and hinders statistical inference.
In this paper, we exploit smoothness in the marginal densities of covariates and potential outcomes and develop a wavelet-based primal method for COT with multivariate outcomes and covariates.
Moreover, for quadratic cost functions, we establish a stability result for COT and prove asymptotic normality of the proposed estimator. This characterization of the asymptotic distribution enables valid statistical inference for the partial identification set.
Empirically, we validate the estimation and inference performance of our approach through numerical experiments in comparison with existing benchmarks.
\end{abstract}


\textbf{Keywords:} Causal inference; partial identification; conditional optimal transport; curse of dimensionality; smooth density estimation; wavelet methods.
\section{Introduction}\label{sec:introduction}

We study statistical inference for causal estimands that are only partially identifiable under the potential outcome framework. 
The target is a partial identification (PI) set, which can be characterized through an optimal transport (OT) formulation~\parencite{gao2024bridging}. 
When covariates are available, conditional optimal transport (COT) yields tighter PI sets by conditioning on the covariate distribution, thus provides a natural framework for inference in modern observational and experimental datasets~\parencite{ji2023model,lin2025estimation,fan2025partial}. 

The problem is important for causal inference because PI sets quantify uncertainty caused by missing counterfactual outcomes, and their widths directly inform decisions in policy evaluation, fairness assessments, and treatment effect analysis. 
In particular, PI sets can be interpreted as sharp bounds on counterfactual distributions or causal estimands, which makes them central to counterfactual inference.
At the same time, COT is of independent interest in OT theory as a conditional analogue of Wasserstein distances, closely related to adapted Wasserstein metrics studied in stochastic and dynamic settings~\parencite{backhoff2022estimating}. 
Thus, advances in COT estimation and inference have immediate implications for both causal inference and statistical OT.

This perspective connects causal inference with statistical OT, an area with rapidly developing theory and methods~\parencite{villani2009optimal, panaretos2019statistical,niles2022minimax,HutterRigollet2021MinimaxOT,manole2024plugin,hundrieser2022unifying}.
This momentum reflects OT's broad impact across econometrics~\parencite{galichon2018optimal}, image analysis and signal processing~\parencite{rubner2000earth,basu2014detecting,kolouri2017optimal}, generative modeling~\parencite{arjovsky2017wasserstein,genevay2018learning,gulrajani2017improved,wang2023efficient}, distributional robustness~\parencite{mohajerin2018data,kuhn2019wasserstein,blanchet2019quantifying}, and finance/model risk~\parencite{xu2020cot,nguyen2021robustifying,backhoff2022estimating}.
Against this broader backdrop, we focus on statistical inference for causal PI sets via conditional optimal transport.

Current approaches to COT-based PI sets face substantial limitations. 
Dual formulations can yield valid confidence sets but may be conservative and require strong, hard-to-verify assumptions on potential functions~\parencite{ji2023model,al2025error}. 
Penalization and discretization approaches reduce COT to OT at the cost of conservative or slow convergence rates and an acute curse of dimensionality in the covariates and outcomes~\parencite{manupriya2024consistent,lin2025tightening,lin2025estimation}. 
Quantile regression can provide asymptotically exact confidence sets but is limited to scalar outcomes~\parencite{fan2010sharp}. 
Sampling-based methods learn transport maps or samplers rather than the optimal objective value that defines the PI set~\parencite{hosseini2025conditional}. 
Overall, existing methods do not provide asymptotically normal estimators with practical inference for multivariate outcomes in high-dimensional covariate settings.

We address these challenges with a smoothness-based, primal COT estimator built on wavelet expansions of the observable marginal densities, inspired by recent advances in statistical OT~\parencite{HutterRigollet2021MinimaxOT,niles2022minimax,manole2024plugin}. 
Unlike dual approaches that require smoothness or regularity of Kantorovich potentials and involve solving optimization problems over function spaces to recover the dual potentials~\parencite{ji2023model,al2025error}, our primal estimator instead exploits smoothness of the densities, an assumption that is more directly verifiable in practice since the densities are observable but potentials are not. 
The resulting estimator simply plugs in a smooth density estimate to compute the COT value, without the need to solve any optimization problem.
Under smoothness assumptions, we establish faster convergence rates for general Lipschitz costs, and for quadratic costs we prove a COT stability result that leads to asymptotic normality and valid confidence sets.

Our contributions span both causal PI inference and COT theory. On the causal side, we provide a principled inferential procedure with improved rates and asymptotically normal confidence sets; to our knowledge, this is the first PI inference approach via COT that delivers asymptotic normality for multivariate outcomes under smoothness. 
On the OT side, we establish a stability-based central limit theorem for conditional transport under quadratic costs, extending smooth OT results to the conditional setting; to our knowledge, this is the first such CLT for conditional optimal transport, as existing OT CLTs do not cover conditional transport values. 
A direct comparison of convergence rates and assumptions with prior COT estimators appears in \Cref{tab:cot_rates}. 
The following points summarize the methodological, theoretical, and empirical contributions.


    \begin{enumerate}
        \item We develop a primal, smoothness-based estimator for COT using wavelet expansions of observable marginal densities, complementing dual approaches that rely on potential smoothness~\parencite{ji2023model,al2025error}. Under smoothness conditions, we establish convergence rates for general Lipschitz costs, demonstrating fast estimation accuracy for causal PI sets.
 
        \item For quadratic costs, we generalize OT stability to the COT setting and leverage it to prove asymptotic normality and conduct valid statistical inference for causal PI sets. 
        \item We provide empirical evidence that the proposed estimator improves the estimation accuracy over existing COT benchmarks and asymptotically controls the type I error for inference.
    \end{enumerate}

\paragraph{Organization.}
\Cref{sec:problem_form} introduces the potential outcomes framework, conditional optimal transport, and wavelet estimation. 
\Cref{sec:estimation} presents the wavelet-based COT estimator and its convergence rates. 
\Cref{sec:inference} focuses on quadratic costs and develops the COT stability result and asymptotic normality. 
\Cref{sec:empirical} reports experimental results. 
\Cref{sec:discussion} concludes with limitations and future directions. 
All proofs and additional empirical results appear in the supplementary materials.

\paragraph{Notation.}
We denote $[n] = \{1,...,n\}, n\wedge m = \min(n,m), n \vee m = \max(n,m)$. 
We denote by $\calP(\Omega)$ the set of probability measures on $\Omega$. 
We use $\mathcal{P}_{\mathrm{ac}}(\Omega)$, for $\Omega \subseteq \mathbb{R}^d$, to denote the set of probability measures that are absolutely continuous with respect to the Lebesgue measure on $\mathbb{R}^d$.
We use $\mu_{Y,Z}$ to denote the probability distribution (and its associated measure on $\calY \times \calZ$) of $(Y, Z)$ under $\mu$, with $\mu_{Z}$ denoting the marginal distribution of $Z$, and $\mu^{z}_{Y}$ the conditional distribution of $Y$ given $Z=z$. By $\mu = \mu_Z \otimes \mu_Y^{Z}$, we mean that for any measurable function $g$, $\int g \diff \mu = \int_{\calZ} \int_{\calY} g(z,y) \diff \mu_Y^{z}(y) \diff \mu_Z(z)$. 
We denote the set of the joint couplings $\pi$ with marginals $\mu, \nu \in \calP(\calX)$ ($\calX$ may be $\calY, \calZ$ or $\calY \times \calZ$) by
\begin{equation}\label{eq:coupling_set}
    \Pi(\mu, \nu) \Let \left\{\pi \in \calP(\calX^2): \pi_{X} = \mu,~\pi_{X'} = \nu \right\}.
\end{equation}
Given a density function $f(y,z)$, we denote its marginal density of $z$ by $f_Z(z)$, and the conditional density of $y$ given $z$ by
$f(y \mid z) = {f(y,z)}/{f_Z (z)}$.
Given a measurable objective function $h$ and probability distributions $\PP, \QQ \in \calP(\RR^{d_X})$, the optimal transport distance is defined as 
    \[
    W_h(\PP, \QQ) \Let \min_{\pi \in \Pi(\PP, \QQ)} \EE_{\pi}[h(X, X')],
    \]
    When $h(x,x') = \|x - x'\|_2^p$, we denote $W_p(\PP, \QQ) = W_h(\PP, \QQ)^{\frac{1}{p}}$, which is the so-called Wasserstein $p$-distance.

For joint distributions $\PP$ and $\QQ$ on $\mathcal Y \times \mathcal Z$,
we write $\phi_Z$ and $\psi_Z$ for the Kantorovich potentials associated
with the conditional laws $\PP_Y^z$ and $\QQ_Y^z$.
We use $P_Z$ to denote the marginal distribution of $Z$ induced from $P$, and similarly for $Q_Z$.
We use $W_p(\PP,\QQ)$ for the Wasserstein-$p$ distance between $\PP$ and $\QQ$, and $\Cwass_p(\PP,\QQ)$ for the conditional Wasserstein-$p$ distance defined as $(\EE[W_p(\PP(Y\mid Z),\QQ(Y\mid Z))^p])^{1/p}$.

\section{Preliminary}\label{sec:problem_form}

\subsection{Potential Outcome Model and Conditional Optimal Transport}\label{sec:potential.outcome.model}
Suppose there are $2n$ units, and each unit is associated with two potential outcomes \( Y_i(0) \) and \( Y_i(1) \in [0,1]^{d_Y} \) (see, e.g., \parencite{rubin1974estimating}). Specifically, \( Y_i(0) \), \( Y_i(1) \) is the outcome under control, treatment, respectively.
We allow $d_Y \ge 1$, so that the outcome, normalized to $[0,1]^{d_Y}$, may be vector-valued.
This setting naturally arises in applications such as clinical trials with primary and secondary outcomes,
physics experiments where outcomes correspond to two- or three-dimensional spatial locations,
and policy evaluations involving multiple aspects, such as effectiveness and cost.
In addition, we suppose each unit also comes with a covariate vector \( Z_i \in [0,1]^{d_Z} \), \( d_Z \ge 1 \).
We consider the following super-population model~\parencite{imbens2015causal} for $(Y_i(0), Y_i(1), Z_i)$,
\begin{align}\label{eq:super.population.model}
    (Y_i(0), Y_i(1), Z_i)
    \stackrel{\mathrm{i.i.d.}}{\sim} \mu,
\end{align}
where $\mu$ denotes an unknown joint distribution.
This super-population model satisfies the standard stable unit treatment value assumption (SUTVA) \parencite{imbens2015causal}, as described in the first paragraph of Section~\ref{sec:potential.outcome.model}.

Each unit receives a binary treatment assignment \( W_i \in \{0,1\} \), where \( W_i = 1 \) indicates treatment and \( W_i = 0 \) indicates control, and only the potential outcome corresponding to the received treatment level is observed, i.e., \( Y_i=Y_i(W_i) \).
For the treatment assignment, we focus on a completely randomized design with half of the units assigned to the treatment, that is,
\begin{align}\label{eq:CRD.half.treated}
    (W_i)_{i=1}^{2n}\ \sim\ \mathrm{Uni}\Big\{ w\in\{0,1\}^{2n}:\ \sum_{i=1}^{2n} w_i=n\Big\}.
\end{align}
Our approach and analysis can be generalized to assignment mechanisms satisfying the unconfoundedness and the overlap assumptions in~\cite{imbens2015causal}.
Under conditions \eqref{eq:super.population.model} and \eqref{eq:CRD.half.treated}, the outcome-covariate distributions of \( (Y_i(1), Z_i) \) and \( (Y_i(0), Z_i) \) are identifiable from the observed data. 
We use $P$ to denote the distribution of $(Y_i(0), Z)$ and $Q$ to denote the distribution of $(Y_i(1), Z)$, and write $p$ and $q$ for their densities when they exist. 
At the sample level, we introduce the notation $(Y_i(w), Z_i(w)) := (Y_i(w), Z_i)$ if $W_i = w$ for $w = 0,1$. Reindexing the sample, we will denote the sample from treatment group as $\calS_{1} := ((Y_i(1), Z_i(1)), i=1,...,n)$ and sample from control group as  $\calS_{0} :=((Y_i(0), Z_i(0)), i=1,...,n)$, such that $\calS_1$ can be viewed as an i.i.d.~sample from $\QQ$ and $\calS_0$ can be viewed as an i.i.d.~sample from $\PP$, both with size $n$.

In this paper, we focus on causal estimands of the form 
\begin{align}\label{eq:causal.estimand}
    V \Let \mathbb{E}_{\mu}[h(Y(0), Y(1))].
\end{align} 
Here, \( h: \calY \times \calY \rightarrow \mathbb{R} \) is a pre-specified objective function.
As $Y_i(0)$, $Y_i(1)$ are never observed simultaneously, the joint distribution of \( (Y_i(0), Y_i(1), Z_i) \), i.e., $\mu$, is not identifiable.
As a result, causal estimands~\eqref{eq:causal.estimand}  may also not be identifiable from observed data, i.e., two different joint distributions can yield different values of the causal estimand while producing the same distribution over observable quantities.
This issue is commonly known as \textit{partial identification}.
For partially identifiable causal estimands, point estimation is infeasible, and the object of interest becomes the partial identification (PI) set, that is, the set of values consistent with the identifiable marginal distributions of \( (Y_i(0), Z_i(0))\) and \( (Y_i(1), Z_i(1)) \), which we denote as 
\begin{align}\label{eq:COT.coupling}
\begin{split}
    \Pi_{\cc}(\PP, \QQ) \Let &\left\{ \pi \in \mathcal{P}(\calY^2 \times \calZ): \right.\\
&~~ \left.\pi_{Y(0), Z} = \PP, \ \pi_{Y(1), Z} = \QQ \right\}.
\end{split}
\end{align}
Given a measurable function $h$, the PI set for the moment $\mathbb{E}_{\mu}[h(Y(0), Y(1))]$ is
\begin{align}\label{eq:COT.PI}
    \left\{ \mathbb{E}_\pi[h(Y(0), Y(1))] : \pi \in \Pi_{\cc}(\PP, \QQ)  \right\}
    = [V_{\cc}, \widetilde V_{\cc}],
\end{align}
where we use the fact that the coupling set $\Pi_{\cc}$ in~\eqref{eq:COT.coupling} is convex and thus the induced PI set in~\eqref{eq:COT.PI} is a convex subset of $\mathbb{R}$ and hence an interval\footnote{The interval may be unbounded, with endpoints in $\{-\infty,+\infty\}$.}. Therefore, characterizing the PI set reduces to computing its lower and upper bounds.
In particular, we focus on the lower bound
\begin{align}\label{eq:COT.lower}
    V_{\cc}
    = \min_{\pi \in \Pi_{\cc}(\PP, \QQ) } \mathbb{E}_\pi[h(Y(0), Y(1))],
\end{align}
which can be interpreted as a \textit{conditional optimal transport} problem. The upper bound $\widetilde V_{\cc}$ is obtained analogously by replacing $h$ with $-h$.


\begin{definition}[Conditional optimal transport]
    For a measurable objective function $h$ and $\PP, \QQ$ that satisfy $\PP_Z = \QQ_Z$, the conditional optimal transport between $\PP, \QQ$ with respect to $Z$ is defined as
    \[
        \Cwass_h(\PP, \QQ) = \min_{\pi \in \Pi_{\cc}(\PP, \QQ) } \mathbb{E}_\pi[h(Y(0), Y(1))].
    \]
    For $h = \|y_0 - y_1\|_2^p$, we denote $\Cwass_p(\PP, \QQ) := (\Cwass_h(\PP, \QQ))^{\frac{1}{p}}$.
\end{definition}

\subsection{Curse of Dimensionality and Wavelet Density Estimator}\label{sec:wavelet.density.estimator}
It is well known that empirical estimation of optimal transport values such as $V_{\cc}$ suffers from the curse of dimensionality of the outcome, even in the absence of covariates (e.g., \cite{fournier2015rate}). This phenomenon is intrinsic to plug-in estimators. In particular, in the primal approach, replacing $\PP$ and $\QQ$ with the corresponding empirical distributions typically yields a statistical error of order $n^{-1/d_Y}$, which deteriorates rapidly as the dimension of the outcome space increases. In the conditional optimal transport literature, existing approaches (e.g., \cite{lin2025tightening, lin2025estimation}) additionally depend on the covariate dimension, leading to even slower convergence rates of order $n^{-1/(d_Y + d_Z)}$ for estimating $V_{\cc}$. 

In this paper, we exploit smoothness properties of the underlying data-generating distributions to mitigate the curse of dimensionality. Our approach is based on a wavelet expansion of the joint densities. For this purpose, we introduce the definition of the Besov space.
\begin{definition}[Besov space]\label{defi:Besov.space}
Let $s>0$ and $p,q\in[1,\infty]$.
Let $\Psi=(\Phi,\Psi_1,\Psi_2,\ldots)$ be a wavelet system on $[0,1]^d$,
where $\Phi$ denotes the collection of scaling functions and $\Psi_j$ the collection of wavelet functions at resolution level $j$.
Let $j_0\ge 0$ denote a fixed coarse resolution level determined by the wavelet construction.

For $f\in L^p([0,1]^d)$ admitting the wavelet expansion
\begin{align*}    
&f(x)
=
\sum_{\zeta\in\Phi}\theta_\zeta\,\zeta(x)
+
\sum_{j=j_0}^\infty\sum_{\xi\in\Psi_j}\theta_\xi\,\xi(x),\\
\theta_\zeta=&\int_{[0,1]^d}\zeta(x)f(x)\,dx, ~~
\theta_\xi=\int_{[0,1]^d}\xi(x)f(x)\,dx,
\end{align*}
the Besov norm $\|f\|_{\mathcal B^s_{p,q}([0,1]^d)}$ is defined as
\[
\|(\theta_\zeta)_{\zeta\in\Phi}\|_{\ell_p}
+
\left\|
\left(
2^{\,j(s+d/2-d/p)}
\|(\theta_\xi)_{\xi\in\Psi_j}\|_{\ell_p}
\right)_{j\ge j_0}
\right\|_{\ell_q}.
\]
The Besov space $\mathcal B^s_{p,q}([0,1]^d)$ is defined as
\[
\left\{
f\in L^p([0,1]^d):
\|f\|_{\mathcal B^s_{p,q}([0,1]^d)}<\infty
\right\}.
\]
\end{definition}
The Besov norm can be extended to negative smoothness indices $s<0$ via duality:
\[
\mathcal B^{s}_{p',q'}([0,1]^d)
=
(\mathcal B^{-s}_{p,q}([0,1]^d))^*,
~~
\frac{1}{p'}+\frac{1}{p}=1,~
\frac{1}{q'}+\frac{1}{q}=1.
\]

In this paper, we choose the boundary-corrected wavelet system on $[0,1]^{d_Y + d_Z}$ and defer the details of this wavelet system to \Cref{sec:bc_wavelet}. The reason why the wavelet expansion is useful for the analysis arises from the following result.
\begin{proposition}[{\cite[Theorem 4]{niles2022minimax}}]
    Suppose that $\widehat \mu, \mu \in \calP_{\textup{ac}}([0,1]^d)$ with densities $\hat f, f \in L^2([0,1]^d)$. If $f(x) \geq \gamma^{-1}\,\forall x\in[0,1]^d$ for some $\gamma > 0$, then there is a universal constant $C_{0}>0$ such that
    \begin{align*}
        W_2(\widehat \mu, \mu) \leq C_{0} \gamma^{\frac{1}{2}}  \|\hat f - f\|_{\calB_{2,1}^{-1}([0,1]^d)}.
    \end{align*}
\end{proposition}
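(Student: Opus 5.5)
The plan is to compare $W_2$ with the dual negative Sobolev norm $\dot H^{-1}$ weighted by $\mu$, and then to bound that norm by the Besov $\mathcal B^{-1}_{2,1}$ norm. The first ingredient is the Peyre-type inequality: if $\mu$ has density bounded below by $\gamma^{-1}$ on the convex domain $[0,1]^d$, then
\[
W_2(\widehat\mu,\mu) \le 2\,\|\widehat\mu-\mu\|_{\dot H^{-1}(\mu)}, \qquad \|g\|_{\dot H^{-1}(\mu)} \Let \sup\Big\{\textstyle\int \phi\, g : \int |\nabla \phi|^2 \, d\mu \le 1\Big\}.
\]
To prove it, I will run the Benamou--Brenier argument along the linear interpolation $\rho_t=(1-t)\mu+t\widehat\mu$. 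Each $\rho_t \ge (1-t)\gamma^{-1}$, so we have $\|g\|_{\dot H^{-1}(\rho_t)}\le (1-t)^{-1/2}\|g\|_{\dot H^{-1}(\mu)}$. Integrating the velocity field solving $\partial_t\rho_t+\nabla\cdot(\rho_t v_t)=0$ then gives
\[
W_2 \le \int_0^1 \|g\|_{\dot H^{-1}(\rho_t)}\,dt \le 2\|g\|_{\dot H^{-1}(\mu)}, \qquad g = \hat f - f.
\]
Since $\int|\nabla\phi|^2 d\mu\ge\gamma^{-1}\|\nabla\phi\|_{L^2}^2$, we get $\|g\|_{\dot H^{-1}(\mu)}\le \gamma^{1/2}\|g\|_{\dot H^{-1}(\mathrm{Leb})}$, which produces the factor $\gamma^{1/2}$.

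The second step bounds the unweighted norm: $\|g\|_{\dot H^{-1}}\lesssim\|g\|_{\mathcal B^{-1}_{2,1}}$ for mean-zero $g$. Here $g=\hat f-f$ has mean zero because both are densities. By duality, $\mathcal B^{-1}_{2,1}=(\mathcal B^{1}_{2,\infty})^*$, so $\int\phi g\le \|\phi\|_{\mathcal B^1_{2,\infty}}\|g\|_{\mathcal B^{-1}_{2,1}}$. Since $g$ has mean zero, we may subtract the mean of $\phi$ and apply Poincaré on the cube, giving $\|\phi-\bar\phi\|_{H^1}\lesssim\|\nabla\phi\|_{L^2}$. The continuous embedding $H^1=\mathcal B^1_{2,2}\hookrightarrow\mathcal B^1_{2,\infty}$ then lets us take the supremum over $\|\nabla\phi\|_{L^2}\le1$.

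The main obstacle is making the embedding and the duality valid with constants depending only on $d$ and the fixed boundary-corrected wavelet system, together with the rigorous justification of the Benamou--Brenier step. For the latter, one must solve the elliptic equation $-\nabla\cdot(\rho_t\nabla u)=g$ with Neumann boundary conditions on the cube, using the lower bound on $\rho_t$. I would first reduce to smooth positive densities by approximation and the lower semicontinuity of $W_2$. I would then invoke the standard wavelet characterization of $H^1$ on $[0,1]^d$ for the boundary-corrected system to get the norm equivalence.
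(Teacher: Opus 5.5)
Your argument is correct, but it does not follow the paper's route. The paper gives no argument of its own; its proof simply cites Theorem 4 of Niles-Weed and Berthet, specialized to $W_2$. You instead give a self-contained proof of that bound from two ingredients. The first is Peyre's weighted inequality $W_2(\widehat\mu,\mu)\le 2\|\hat f-f\|_{\dot H^{-1}(\mu)}$, obtained via Benamou--Brenier along $\rho_t=(1-t)\mu+t\widehat\mu$. Strictly, the factor $(1-t)^{-1/2}$ comes from $\rho_t\ge(1-t)\mu$ as measures, not from the pointwise lower bound. The second is the comparison $\|g\|_{\dot H^{-1}}\lesssim\|g\|_{\mathcal{B}^{-1}_{2,1}}$ for mean-zero $g$.

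That second step is more elementary than you make it. With the paper's definition of the negative-index norm, $\int\phi g\le\|\phi\|_{\mathcal{B}^{1}_{2,\infty}}\|g\|_{\mathcal{B}^{-1}_{2,1}}$ holds with constant $1$. So you only need the Jackson-type estimate $2^{j}\|(\theta_\xi(\phi))_{\xi\in\Psi_j}\|_{\ell_2}\lesssim\|\nabla\phi\|_{L^2}$ for the boundary-corrected basis, plus Poincar\'e for the coarse coefficients of $\phi-\bar\phi$.

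The citation buys brevity and offloads the technicalities you flag: the weighted Neumann problem when $\rho_t$ is merely in $L^2$, and the wavelet estimates on the cube. Your route buys transparency, in three ways:
\begin{itemize}
\item Only $L^2$ densities and a lower bound on one of them are used.
\item One can take $C_0=2\sup_{\|\nabla\phi\|_{L^2}\le 1}\|\phi-\bar\phi\|_{\mathcal{B}^1_{2,\infty}}$, which depends on $d$ and the wavelet system rather than being literally universal.
\item If you use the pointwise bound $\rho_t\ge\min(t,1-t)\gamma^{-1}$ directly, the same integration also covers the weaker hypothesis $\hat f\vee f\ge\gamma^{-1}$ in the appendix restatement, with $2\sqrt{2}$ in place of $2$.
\end{itemize}
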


\begin{definition}[Wavelet density estimator]\label{defi:wavelet.density.estimator}
Let $J_n \ge j_0$ be a resolution level. Given i.i.d.~observations $(X_i)_{i=1}^n$ with density $f$, define the wavelet projection
\[
\widetilde f_{J_n}(x)
:=
\sum_{\zeta\in\Phi}\widehat \theta_\zeta\,\zeta(x)
+
\sum_{j=j_0}^{J_n}\sum_{\xi\in\Psi_j}\widehat \theta_\xi\,\xi(x),
\]
where $\widehat\theta_{\zeta}
:=
\frac{1}{n}\sum_{i=1}^n
\zeta(X_i), \widehat\theta_{\xi}
:=
\frac{1}{n}\sum_{i=1}^n
\xi(X_i).
$

The boundary-corrected wavelet density estimator is defined as
\begin{align}\label{eq:wavelet.estimator}
\widehat f_{J_n}(x)
=
\frac{\widetilde f_{J_n}(x)\,\mathbf 1\{\widetilde f_{J_n}(x)\ge 0\}}
{\int_{\widetilde f_{J_n}(x)\ge 0}\widetilde f_{J_n}(x)\,dx}.
\end{align}
Specifically, in this work, we take $J_n = \lfloor \frac{\log_2(n)}{2s + d_Y + d_Z}\rfloor$ and $j_0 \geq \lfloor\log_2(s/0.18 + 1)\rfloor + 1$ (which is standard as discussed in \cite[Section A.2]{manole2024plugin}).
\end{definition}

Additional to a wavelet expansion, we also focus on density with uniform smoothness described by the H\"older norm.
\begin{definition}[H\"older space]\label{defi:Holder.space}
Let $s > 0$, the H\"older space $C^s([0,1]^d)$ consists of
functions $f$ whose derivatives up to order $\lfloor s \rfloor$
extend continuously to $[0,1]^d$ and satisfy
\begin{align*}
&\|f\|_{C^s([0,1]^d)}
:=
\sum_{|\alpha|\le \lfloor s \rfloor}\!
\|D^\alpha f\|_\infty +
\sum_{|\alpha|=\lfloor s \rfloor}
\sup_{\substack{x,y\in[0,1]^d\\ x\ne y}}
\frac{|D^\alpha f(x)-D^\alpha f(y)|}
{\|x-y\|^{s-\lfloor s \rfloor}}
<\infty,
\end{align*}
where $
D^\alpha f
=
\frac{\partial^{|\alpha|} f}
{\partial x_1^{\alpha_1}\cdots \partial x_d^{\alpha_d}}$, $|\alpha|=\sum_{i=1}^d \alpha_i$.
\end{definition}

To conclude this section, we present the following result, showing that any smooth density function embeds into the Besov space, admitting a wavelet expansion.
\begin{proposition}[{c.f.,\cite[Lemma 25]{manole2024plugin}}]\label{prop:embedding}
    $\calC^s([0,1]^d) \subseteq \mathcal{B}^s_{\infty,\infty}([0,1]^{d})$, $\calC^s \subseteq \mathcal{B}^{s - \epsilon}_{2,2}$ for any $\epsilon > 0$.
\end{proposition}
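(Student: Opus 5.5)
The plan is to work entirely at the level of wavelet coefficients. The proposition then reduces to two norm comparisons. The first is a pointwise bound on the coefficients $\theta_\xi$ of a Hölder function. The second is an elementary comparison between weighted sequence norms.

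\textbf{Step 1: $\calC^s \subseteq \mathcal B^s_{\infty,\infty}$.} Take $f\in\calC^s([0,1]^d)$ and fix a wavelet $\xi\in\Psi_j$. Write $\xi(x)=2^{jd/2}\xi_0(2^jx-k)$, where $\xi_0$ is compactly supported, and let $Q_{j,k}$ be its support, of diameter $\lesssim 2^{-j}$. The boundary-corrected wavelets with $j\ge j_0$ chosen as in \Cref{defi:wavelet.density.estimator} have vanishing moments up to order at least $\lfloor s\rfloor$; this includes the boundary-adapted ones, by construction. We therefore subtract the Taylor polynomial $T_{x_0}f$ of degree $\lfloor s\rfloor$ at the center $x_0$ of $Q_{j,k}$ and obtain
\[
|\theta_\xi|=\Big|\int (f-T_{x_0}f)\,\xi\Big|\le \|f\|_{\calC^s}\,\sup_{x\in Q_{j,k}}\|x-x_0\|^{s}\,\|\xi\|_{L^1}\lesssim \|f\|_{\calC^s}\,2^{-js}\,2^{-jd/2}.
\]
The remainder bound $|f(x)-T_{x_0}f(x)|\lesssim\|f\|_{\calC^s}\|x-x_0\|^s$ follows from the mean value form of Taylor's theorem together with the Hölder seminorm of $D^\alpha f$, $|\alpha|=\lfloor s\rfloor$. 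When $s$ is an integer, one instead uses Taylor expansion to order $s-1$ and the bound on the $s$-th derivatives. For the scaling coefficients we only need $|\theta_\zeta|\le\|f\|_\infty\|\zeta\|_{L^1}\lesssim\|f\|_{\calC^s}$, and $\Phi$ is finite. With $p=\infty$ the Besov weight is $2^{j(s+d/2)}$, so
\[
\sup_{j\ge j_0} 2^{j(s+d/2)}\sup_{\xi\in\Psi_j}|\theta_\xi|\lesssim\|f\|_{\calC^s},
\]
and therefore $\|f\|_{\mathcal B^s_{\infty,\infty}}\lesssim\|f\|_{\calC^s}$.

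\textbf{Step 2: $\calC^s\subseteq\mathcal B^{s-\epsilon}_{2,2}$.} Let $N_j:=|\Psi_j|\lesssim 2^{jd}$. For $p=2$ the Besov weight is $2^{j(s-\epsilon)}$, since $d/2-d/p=0$. Using the bound from Step 1,
\[
\|(\theta_\xi)_{\xi\in\Psi_j}\|_{\ell_2}\le N_j^{1/2}\sup_{\xi\in\Psi_j}|\theta_\xi|\lesssim 2^{jd/2}\,2^{-j(s+d/2)}\|f\|_{\calC^s}=2^{-js}\|f\|_{\calC^s}.
\]
It follows that
\[
\sum_{j\ge j_0}\big(2^{j(s-\epsilon)}\|(\theta_\xi)_{\xi\in\Psi_j}\|_{\ell_2}\big)^2\lesssim\|f\|_{\calC^s}^2\sum_{j}2^{-2j\epsilon}<\infty.
\]
The scaling part is again a finite sum, so it is bounded. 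This gives $\|f\|_{\mathcal B^{s-\epsilon}_{2,2}}\lesssim_\epsilon\|f\|_{\calC^s}$. Equivalently, Step 2 is Step 1 combined with the general embedding $\mathcal B^s_{\infty,\infty}\subseteq\mathcal B^{s-\epsilon}_{2,2}$ on the bounded domain $[0,1]^d$. That embedding is the same counting argument, $\ell_\infty\to\ell_2$ on $N_j$ terms, and the $\epsilon$ loss is what pays for converting $q=\infty$ into $q=2$.

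\textbf{Main obstacle.} The only delicate point is Step 1 for the boundary-corrected wavelets near $\partial[0,1]^d$. There we must check two things: that these wavelets keep enough vanishing moments, and that they satisfy the uniform $L^1$ bound $\|\xi\|_{L^1}\lesssim 2^{-jd/2}$. This is exactly what the constraint $j_0\ge\lfloor\log_2(s/0.18+1)\rfloor+1$ is meant to guarantee. I would invoke the properties of the Cohen--Daubechies--Vial construction recorded in \Cref{sec:bc_wavelet}, as in \cite[Lemma 25]{manole2024plugin}, rather than re-derive them.
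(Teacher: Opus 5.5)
Your proposal is correct and follows essentially the same route as the paper. The paper cites \cite[Lemma 25]{manole2024plugin} both for the first inclusion and for the coefficient decay $\sup_{\xi\in\Psi_j}|\theta_\xi|\lesssim \|f\|_{\calC^s}2^{-j(s+d/2)}$, and then carries out exactly your Step~2 counting argument (that is, $|\Psi_j|\lesssim 2^{jd}$, multiplication by $2^{2j(s-\epsilon)}$, a geometric series, and finitely many scaling coefficients); your Step~1 simply re-derives, via vanishing moments and Taylor expansion, the bound the paper takes from that lemma. One small precision: the vanishing moments come from the order $N$ of the Daubechies family, with $0.18N>s$, while the constraint on $j_0$ only ensures that the boundary construction with that $N$ is valid at every level $j\ge j_0$; this does not affect your argument.
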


\subsection{Assumption}
\begin{assumption}[Bounded density]\label{a:boundedness_assp}
 Assume $P, Q \in \mathcal{P}_{\mathrm{ac}}([0,1]^{d_Y + d_Z})$, and $\gamma^{-1} \leq p$, $q \leq \gamma$ for $\gamma > 0$.
\end{assumption}

\begin{assumption}[Smoothness]\label{a:density_smooth}
 Assume that, for a fixed smoothness parameter $s>0$, the densities $p, q \in\calC^{s+\epsilon}([0,1]^{d_Y + d_Z})$ for some $\epsilon > 0$.
\end{assumption}

\begin{assumption}[Lipschitz objective]\label{a:Lip_obj}
    The objective function $h$ is $L_h$-Lipschitz, i.e., $|h(y_0) - h(y_1)| \leq L_h \|y_0 - y_1\|_2\,$, $ \forall y_0$, $y_1 \in [0,1]^{d_Y}$.
\end{assumption}

Assumption~\ref{a:boundedness_assp} postulates that the density is bounded away from zero, a condition that is both standard and practically mild in nonparametric optimal transport. When this assumption is violated, the convergence rate of optimal transport estimators is known to change qualitatively; see \cite[Section~5]{niles2022minimax} for a detailed discussion.

Assumption~\ref{a:density_smooth} imposes smoothness on the density, which is central to our analysis. This regularity enables a wavelet-based representation of the density and is crucial for constructing estimators that are statistically more efficient than a direct plug-in approaches. Note that, for technical reasons, we require the density to possess slightly higher smoothness than $s$, namely $s + \epsilon$ for an arbitrarily small $\epsilon > 0$.

Assumption~\ref{a:Lip_obj} imposes the Lipschitz continuity of $h$, which is natural in optimal transport,
as it guarantees stability of the optimal transport value with respect to perturbations of the input measures; see \parencite{villani2003topics}.

\section{Main Result}\label{sec:estimation}
\subsection{Wavelet-based COT Estimator}\label{sec:estimator_form}
In this section, we introduce the wavelet-based COT estimator, which is established upon the wavelet density estimator with alignment on the marginal density of $Z$.
\begin{definition}[Wavelet-based COT estimator]\label{defi:wavelet_estimator}
Suppose there is an i.i.d.~sample $\calS_0 = ((Y_i(0), Z_i(0)), 1\leq i \leq n)$ drawn $P$ and an i.i.d.~sample $\calS_1 = ((Y_j(1), Z_j(1)), 1\leq j \leq n)$ drawn from $Q$, $\calS_0 $ independent of $\calS_1$. Let the resolution level be $J_n = \lfloor \frac{\log_2(n)}{2s + d_Y + d_Z}\rfloor$.
    \begin{enumerate}[label=(\roman*)]
        \item Let $\widehat P_n \in \calP([0,1]^{d_Y+d_Z})$ be the distribution associated with the wavelet density estimator based on $\calS_0$ (see Definition~\ref{defi:wavelet.density.estimator}), and $\widehat Q_n$ is defined similarly based on $\calS_1$.
        \item Further, let $\widehat R_{n} = (\widehat \PP_{n,Z} + \widehat \QQ_{n,Z})/2 \in \calP([0,1]^{d_Z})$. 

        \item Let $\widehat P_n^{\dagger}(\diff y, \diff z) = {\widehat \PP_{n, Y}^z}(\diff y) \widehat R_{n}(\diff z)$, $\widehat Q_n^{\dagger}(\diff y, \diff z) = {\widehat \QQ_{n, Y}^z}(\diff y) \widehat R_{n}(\diff z)$. The wavelet-based COT estimator is defined by
        \[
            \widehat V_{\cc, n} := \Cwass_h(\widehat P_n^{\dagger}, \widehat Q_n^{\dagger}).
        \]
    \end{enumerate}
\end{definition}
Since conditional optimal transport requires alignment of the $Z$-marginal, we construct a shared $Z$-marginal using the pooled $Z$-samples from both data sources. The paired $(Y,Z)$ observations from each source are then used to estimate the corresponding conditional distributions.

Note that $\widehat P_n^{\dagger}$ and $\widehat Q_n^{\dagger}$ are continuous distributions. Consequently, $\widehat V_{\cc,n}$ is approximated via a resampling procedure, which is standard in the literature (see, e.g.,~\parencite{weed2019estimation, deb2021rates}). The detailed algorithm is deferred to \Cref{sec:optimal.transport.estimated.density}.

In contrast to \parencite{ji2023model}, our wavelet-based estimator does not require any cross-fitting and thus avoids the repeated computation across folds as well as the artificial randomness introduced by sample splitting.

\subsection{Fast Convergence Rate with General Objective}

\begin{table*}[tbp]
\centering
\caption{Comparison of statistical convergence rates for COT value estimation.}
\label{tab:cot_rates}
\begin{tabular}{lll}
\toprule
\textbf{Reference} & \textbf{Estimation Rate} & \textbf{Key Assumptions} \\
\midrule
\parencite{lin2025estimation} 
& $n^{-\frac{1}{2 \vee d_Y + d_Z}}$ 
& Lipschitz kernel \\

\parencite{ji2023model} 
& $n^{-1/2}$ 
& Density, optimal dual are estimated at rate $O(n^{-1/4})$\\

\textbf{This work}
& $n^{-\frac{s}{2s+d_Y+d_Z}}$ 
& Densities in $\mathcal C^{s+\epsilon}$, bounded away from zero  \\
\textbf{This work}
& $n^{-\frac{2s}{2s+d_Y+d_Z}} \vee n^{-1/2}$ 
& Densities in $\mathcal C^{s+\epsilon}$, bounded away from zero, $h$ quadratic\\
\bottomrule
\end{tabular}\label{tab:compare_rate}
\end{table*}

To bound the statistical convergence rate of the wavelet-based COT estimator, we first present the following stability result of COT under Lipschitz kernels and objectives.
\begin{proposition}[Stability]\label{prop:stability_I}
    Suppose that $W_1(\PP_Y^z, \PP_Y^{z'}) \leq L_p \|z - z'\|_2, W_1(\QQ_Y^z, \QQ_Y^{z'}) \leq L_p \|z - z'\|_2$ for any $z, z'$, and $h$ is $L_h$-Lipschitz, then
    \begin{align}\label{eq:stability_I}
    \begin{split}
        &|\Cwass_h(\widehat \PP, \widehat \QQ) - \Cwass_h(\PP, \QQ)| \leq 2L_h L_p W_1(\PP_Z, \widehat \PP_Z) + L_h \int W_1(\PP_Y^z, \widehat \PP_Y^z) + W_1(\QQ_Y^z, \widehat \QQ_Y^z) \,\diff  \widehat \PP_Z (z). 
        \end{split}
    \end{align}
\end{proposition}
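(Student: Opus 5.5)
The plan is to exploit the disintegrated form of COT. Since the conditional couplings can be chosen measurably in $z$ (a measurable selection of optimal plans), for $\PP_Z=\QQ_Z$ we have
\[
\Cwass_h(\PP,\QQ)=\int W_h(\PP_Y^z,\QQ_Y^z)\,\diff \PP_Z(z),
\]
and similarly $\Cwass_h(\widehat\PP,\widehat\QQ)=\int W_h(\widehat\PP_Y^z,\widehat\QQ_Y^z)\,\diff\widehat\PP_Z(z)$ with $\widehat\PP_Z=\widehat\QQ_Z$. The proof then reduces to two separate comparisons: one changing the conditional laws at fixed $z$, the other changing the integrating measure from $\PP_Z$ to $\widehat\PP_Z$. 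I read $h$ as a function of $y_0-y_1$ (or jointly Lipschitz) with constant $L_h$, and the $W_1$ in the statement as $W_1$ of the $Y$-conditionals.

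The first step is the pointwise estimate
\[
|W_h(\mu,\nu)-W_h(\mu',\nu')|\le L_h\bigl(W_1(\mu,\mu')+W_1(\nu,\nu')\bigr).
\]
To prove it, I glue an optimal plan for $(\mu',\nu')$ with $W_1$-optimal couplings of $\mu$ with $\mu'$ and of $\nu$ with $\nu'$ via the gluing lemma. This yields a coupling of $(\mu,\nu)$ whose cost exceeds $W_h(\mu',\nu')$ by at most $L_h\,\EE\|Y_0-Y_0'\|+L_h\,\EE\|Y_1-Y_1'\|$. Applying the argument symmetrically gives the absolute value. I then split
\[
\Cwass_h(\widehat\PP,\widehat\QQ)-\Cwass_h(\PP,\QQ)
=\int\bigl[W_h(\widehat\PP_Y^z,\widehat\QQ_Y^z)-W_h(\PP_Y^z,\QQ_Y^z)\bigr]\diff\widehat\PP_Z
+\Bigl[\int g\,\diff\widehat\PP_Z-\int g\,\diff\PP_Z\Bigr],
\]
where $g(z):=W_h(\PP_Y^z,\QQ_Y^z)$. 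The pointwise estimate bounds the first term by $L_h\int W_1(\PP_Y^z,\widehat\PP_Y^z)+W_1(\QQ_Y^z,\widehat\QQ_Y^z)\,\diff\widehat\PP_Z$, which is exactly the second term in \eqref{eq:stability_I}.

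For the second bracket, I apply the pointwise estimate again with $z,z'$. Together with the Lipschitz-kernel hypotheses this gives $|g(z)-g(z')|\le L_h(L_p+L_p)\|z-z'\|_2=2L_hL_p\|z-z'\|_2$. Kantorovich--Rubinstein duality then bounds the bracket by $2L_hL_pW_1(\PP_Z,\widehat\PP_Z)$, which completes \eqref{eq:stability_I}.

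The main obstacle is the first step, the disintegration identity $\Cwass_h=\int W_h\,\diff\PP_Z$. The ``$\ge$'' direction is immediate from disintegrating any $\pi\in\Pi_\cc$ along $Z$. The ``$\le$'' direction requires a measurable selection $z\mapsto\pi^z$ of optimal plans. I would obtain it by noting that the optimal-plan correspondence is closed-valued and measurable in $z$ (the kernels are weakly continuous by the $W_1$-Lipschitz assumption, and the cost is continuous on a compact set), and then invoking the Kuratowski--Ryll-Nardzewski theorem, as in Villani's Corollary 5.22. If that is delicate, an $\varepsilon$-optimal measurable selection suffices, because the proof only needs the identity up to arbitrary $\varepsilon$.
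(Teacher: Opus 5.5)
Your argument is correct and is essentially the paper's proof. Both rest on two ingredients: the disintegration $\Cwass_h(\PP,\QQ)=\int W_h(\PP_Y^z,\QQ_Y^z)\,\diff\PP_Z(z)$, which the paper cites and you justify via measurable selection, and the pointwise bound $|W_h(\mu,\nu)-W_h(\mu',\nu')|\le L_h\bigl(W_1(\mu,\mu')+W_1(\nu,\nu')\bigr)$, which the paper cites and you prove by gluing. One small point on the selection step: for $(\widehat\PP,\widehat\QQ)$ there is no Lipschitz-kernel hypothesis, so there you should rely only on measurability of the kernels, which Villani's Corollary 5.22 allows. The only real difference is how you pass from $\PP_Z$ to $\widehat\PP_Z$: the paper uses an optimal $W_1$ coupling plus a triangle inequality on the $\QQ$-kernel, whereas you note that $z\mapsto W_h(\PP_Y^z,\QQ_Y^z)$ is $2L_hL_p$-Lipschitz and apply Kantorovich--Rubinstein duality, which is the dual form of the same step and gives the same constant.
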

The first term on the right-hand side of \eqref{eq:stability_I} can be bounded using standard results from wavelet density estimation \parencite{niles2022minimax}. In contrast, we derive the convergence rate of the second term for the density estimator introduced in Section~\ref{sec:estimator_form}. 
\begin{proposition}[Key estimation bound]\label{prop:core_rate_main}
Under Assumption~\ref{a:boundedness_assp}-\ref{a:density_smooth},
    \[
        \EE\left[\int W^2_2(\widehat \QQ_Y^z, \QQ_Y^z) \, \widehat R(\diff z)\right] \leq C n^{-\frac{2s}{2s+d_Y+d_Z}},
    \]
    where $C$ is a constant that depends on $P, Q, \gamma, s, d_Y, d_Z$.
\end{proposition}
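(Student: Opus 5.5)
Throughout, $\widehat\QQ$ and $\widehat R$ denote $\widehat\QQ_n$ and $\widehat R_n$. The plan is to reduce the conditional $W_2$ error to a joint-density error in a negative Besov norm, and then control that error by standard wavelet estimation bounds.

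\textbf{Step 1 (pointwise reduction).} Fix $z$. First establish, on a high-probability event $E_n$, that the estimated conditional density $\hat q(\cdot\mid z)=\hat q(\cdot,z)/\hat q_Z(z)$ is bounded below by a constant multiple of $\gamma^{-1}$. On $[0,1]^{d_Y}$ the true conditional density $q(y\mid z)=q(y,z)/q_Z(z)$ is also bounded below by $\gamma^{-2}$, since $q_Z(z)\le \gamma$. Under Assumption~\ref{a:density_smooth} and the choice of $J_n$, the wavelet estimator satisfies $\|\hat q-q\|_\infty\to 0$ with high probability, and $\hat q_Z$ inherits this. Hence we may apply the $W_2$--Besov comparison of \cite[Theorem 4]{niles2022minimax} in dimension $d_Y$ with $\gamma$ replaced by $\gamma^2$ to get
\[
W_2^2(\widehat\QQ_Y^z,\QQ_Y^z)\lesssim \gamma^2\,\|\hat q(\cdot\mid z)-q(\cdot\mid z)\|^2_{\calB^{-1}_{2,1}([0,1]^{d_Y})}.
\]
Decompose $\hat q(\cdot\mid z)-q(\cdot\mid z)=\frac{\hat q(\cdot,z)-q(\cdot,z)}{\hat q_Z(z)}+q(\cdot,z)\bigl(\frac{1}{\hat q_Z(z)}-\frac1{q_Z(z)}\bigr)$. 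The second term is a fixed smooth function times the scalar $|\hat q_Z(z)-q_Z(z)|$. Off $E_n$, bound $W_2^2\le d_Y$ trivially, and show $\PP(E_n^c)$ decays faster than any polynomial in $n$ by a Talagrand/Bernstein bound on the sup-norm of the wavelet projection.

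\textbf{Step 2 (integrate in $z$).} Since $\widehat R$ has density bounded by roughly $\gamma$ on $E_n$, replace $\widehat R(\diff z)$ by $\gamma\,\diff z$. The second term then contributes $\int|\hat q_Z-q_Z|^2\diff z$. Since $\hat q_Z$ is the marginal of the joint wavelet estimator, this is at most $\|\hat q-q\|_{L^2}^2$, and its expectation is of order $n^{-2s/(2s+d)}$ with $d=d_Y+d_Z$. This is the standard variance-plus-bias calculation: variance $2^{J_n d}/n$, bias $2^{-2J_n s}$, using Proposition~\ref{prop:embedding}. The first term requires
\[
\int\|\hat q(\cdot,z)-q(\cdot,z)\|^2_{\calB^{-1}_{2,1}(\RR^{d_Y}_{[0,1]})}\,\diff z .
\]

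\textbf{Step 3 (main obstacle).} This mixed quantity is an $L^2$ norm in $z$ of a $\calB^{-1}$ norm in $y$, and bounding it is the crux. The simplest route bounds it by the $L^2$ norm, which gives exactly $n^{-2s/(2s+d)}$, the claimed rate. So no gain from the negative smoothness is needed, and the rate is driven by the $z$-direction.

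To justify this, use the tensor-product structure of the boundary-corrected wavelets: the coefficients of $\hat q(\cdot,z)$ in $y$ are linear combinations of the joint coefficients, and $\|\cdot\|_{\calB^{-1}_{2,1}}\lesssim\|\cdot\|_{L^2}$ on $[0,1]^{d_Y}$. Then apply Fubini, so that $\int\|\hat q(\cdot,z)-q(\cdot,z)\|^2_{L^2(\diff y)}\diff z=\|\hat q-q\|^2_{L^2}$.

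The delicate points are the renormalization and truncation in \eqref{eq:wavelet.estimator}. On $E_n$ the truncation is inactive. The normalizing constant differs from $1$ by $|\int(\tilde q-q)|$, which is of order $n^{-1/2}$ plus bias, so it only perturbs the bound by lower-order terms. Collecting the pieces gives $C n^{-2s/(2s+d_Y+d_Z)}$.
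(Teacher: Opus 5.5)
Your proposal is correct and follows essentially the same route as the paper: a high-probability event (from a sup-norm bound on the wavelet projection) on which truncation is inactive and $\hat q,\hat r$ are bounded above and below; the $W_2$--$\calB^{-1}_{2,1}$ comparison applied to the conditional laws with $\gamma^2$; the same split of $\hat q(\cdot\mid z)-q(\cdot\mid z)$ into a joint-density term and a marginal term; $\calB^{-1}_{2,1}\lesssim L^2$ plus Fubini and Cauchy--Schwarz to reduce both terms to $\|\hat q-q\|_{L^2}^2$; and the bias--variance bound for the linear wavelet projection. The only difference is cosmetic: the paper takes $\hat q=\tilde q$ exactly on that event (the projection already integrates to one, since constants lie in the span of the coarse scaling functions), so your normalization-perturbation argument is unnecessary, though harmless.
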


We are now ready to state our main result.
\begin{theorem}[Fast convergence of COT under smooth densities]\label{thm:main_thm1}
    Under \Cref{a:boundedness_assp} ($\gamma$), \ref{a:density_smooth} ($s$) and \ref{a:Lip_obj} ($L_h$), 
    \[
        \EE[|V_{\cc} - \widehat V_{\cc, n}|] \leq C L_h n^{-\frac{s}{2s+d_Y + d_Z}},      
    \]
    where $C$ is a constant that depends on $P, Q, \gamma, s, d_Y, d_Z$.
\end{theorem}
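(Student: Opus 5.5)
The plan is to combine \Cref{prop:stability_I} with \Cref{prop:core_rate_main}, with one intermediate comparison. Under \Cref{a:boundedness_assp} and \ref{a:density_smooth}, the population marginals agree, $P_Z=Q_Z$, and we call this common law $R$. The estimator is $\widehat V_{\cc,n}=\Cwass_h(\widehat P_n^\dagger,\widehat Q_n^\dagger)$. Both $\widehat P_n^\dagger$ and $\widehat Q_n^\dagger$ have $Z$-marginal $\widehat R_n$, and their conditionals are $\widehat P_{n,Y}^z$ and $\widehat Q_{n,Y}^z$. We apply \Cref{prop:stability_I} with $(\widehat\PP,\widehat\QQ)=(\widehat P_n^\dagger,\widehat Q_n^\dagger)$, which requires a Lipschitz kernel bound $W_1(\PP_Y^z,\PP_Y^{z'})\le L_p\|z-z'\|_2$ for the \emph{population} conditionals. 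This gives
\[
|\widehat V_{\cc,n}-V_{\cc}|\le 2L_hL_p\,W_1(R,\widehat R_n)+L_h\int \bigl[W_1(\PP_Y^z,\widehat \PP_{n,Y}^z)+W_1(\QQ_Y^z,\widehat \QQ_{n,Y}^z)\bigr]\,\widehat R_n(\diff z).
\]

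The first step is to verify the kernel Lipschitz condition. By \Cref{a:boundedness_assp} and \ref{a:density_smooth} with $s+\epsilon>0$, the conditional density $p(y\mid z)=p(y,z)/p_Z(z)$ is a ratio of functions that are bounded and bounded below. When $s+\epsilon\ge1$ it is Lipschitz in $z$ uniformly in $y$. Since $\calY=[0,1]^{d_Y}$ has diameter $\sqrt{d_Y}$, we get $W_1(\PP_Y^z,\PP_Y^{z'})\le \tfrac{\sqrt{d_Y}}{2}\|p(\cdot\mid z)-p(\cdot\mid z')\|_{L^1}\le L_p\|z-z'\|$, with $L_p$ depending on $\gamma$ and $\|p\|_{C^{s+\epsilon}}$; the same holds for $q$. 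If $s<1$, the same argument gives only Hölder continuity of order $s+\epsilon$. The cleanest way around this is to rerun the proof of \Cref{prop:stability_I} with the Hölder modulus: glue optimal $Z$-couplings with conditional couplings and bound $\EE_\pi\|Z-Z'\|^{s}\le W_1(R,\widehat R_n)^{s}$ by Jensen. I would flag this case as needing care; the main line assumes the Lipschitz case.

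Next comes the marginal term. $\widehat R_n$ averages the $Z$-marginals of two wavelet density estimators, so $W_1(R,\widehat R_n)\le \tfrac12\bigl(W_1(P_Z,\widehat P_{n,Z})+W_1(Q_Z,\widehat Q_{n,Z})\bigr)$ by convexity. We then use $W_1\le W_2$. The $Z$-marginal of $\widehat P_n$ is controlled through the joint: projecting onto the $z$-coordinate is $1$-Lipschitz, so $W_2(P_Z,\widehat P_{n,Z})\le W_2(P,\widehat P_n)$. By the proposition of \cite{niles2022minimax} quoted in Section~\ref{sec:wavelet.density.estimator} and standard wavelet risk bounds in $\calB^{-1}_{2,1}$ (available via \Cref{prop:embedding}), we get $\EE W_2(P,\widehat P_n)\lesssim n^{-\frac{s+1}{2s+d}}\vee n^{-1/2}$ with $d=d_Y+d_Z$, which is at most $n^{-s/(2s+d)}$.

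The conditional terms are handled by Cauchy–Schwarz and \Cref{prop:core_rate_main}, applied to both $Q$ and, symmetrically, $P$. The proposition's $\widehat R$ is exactly $\widehat R_n$, so
\[
\EE\int W_1(\QQ_Y^z,\widehat\QQ_{n,Y}^z)\,\widehat R_n(\diff z)\le \Bigl(\EE\int W_2^2(\widehat\QQ_{n,Y}^z,\QQ_Y^z)\,\widehat R_n(\diff z)\Bigr)^{1/2}\le C^{1/2}n^{-\frac{s}{2s+d}}.
\]
Summing the three terms yields the claim with constant $C L_h$, where $L_p$ is absorbed into $C$ since it depends only on $P,Q,\gamma,s$. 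The main obstacle is the regularity of the population kernel needed by \Cref{prop:stability_I}, especially for small $s$. I would try first to establish the Lipschitz case directly, as above, and then handle $s<1$ through the modified gluing argument. The Monte Carlo resampling approximation of $\widehat V_{\cc,n}$ is ignored here, since the statement concerns the exact estimator.
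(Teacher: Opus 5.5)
Your proposal is correct and follows the same route as the paper's proof. It applies \Cref{prop:stability_I} to $(\widehat P_n^{\dagger},\widehat Q_n^{\dagger})$, handles the $Z$-marginal term by convexity of $W_1$ and $W_1\le W_2$, and handles the conditional terms by Jensen plus \Cref{prop:core_rate_main} for both $P$ and $Q$.

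The only differences favour you. The paper bounds $W_2(P_Z,\widehat P_{n,Z})$ through the crude embedding $\|\cdot\|_{\calB^{-1}_{2,1}}\lesssim\|\cdot\|_{L^2}$ and \Cref{prop:L2-bound}, which gives only $n^{-s/(2s+d_Y+d_Z)}$. Your projection argument gives $n^{-(s+1)/(2s+d_Y+d_Z)}\vee n^{-1/2}$, up to a logarithm when $d_Y+d_Z=2$. The paper's kernel-Lipschitz check differentiates $q$ in $z$ and ignores the variation of $q_Z$, so it tacitly excludes $s+\epsilon<1$, which is exactly the case you flag. Your sharper marginal rate is what lets the H\"older gluing argument still give $n^{-s/(2s+d_Y+d_Z)}$ there, since $\bigl(n^{-(s+1)/(2s+d_Y+d_Z)}\bigr)^{s}\le n^{-s/(2s+d_Y+d_Z)}$.
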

When $s$ is sufficiently large, the convergence rate of the COT estimator is close to the parametric rate of order $n^{-1/2}$, relieving the dependence of dimension $d_Y + d_Z$. 
To conclude this section, we compare the rate and assumption of our method with other existing approach estimating the COT value that has a finite-sample convergence rate; see \Cref{tab:compare_rate}.

\subsection{Statistical Inference with Quadratic Objective}\label{sec:inference}

In this section, we focus on the quadratic objective $h(y_0, y_1) = \|y_0 - y_1\|_2^2$, which allows a stronger stability bound and thus 
an analysis of the asymptotic distribution of the wavelet-based COT estimator, enabling standard statistical inference.
The results below readily extend to any objective $h(y_0,y_1)=y_0^\top M y_1$ with $M \in \RR^{d_Y \times d_Y} \succeq 0$.

When the objective function is quadratic for an optimal transport problem, Brenier's theorem \parencite{knott1984optimal, brenier1991polar} shows that the optimal solution is a map defined as the gradient of the so-called Brenier potential function. In our conditional setting, we let $\varphi(y, z)$ be the Brenier potential between $\PP_Y^z$ and $\QQ_Y^z$ for each $z \in [0,1]^{d_Z}$. To obtain a sharper convergence-rate analysis than in the previous section, we will impose stronger smoothness assumptions on the densities $p,q$ and the functions $\varphi$. In particular, under a quadratic objective $h$, the smoothness of $\varphi$ can often be deduced from that of $p$ and $q$. Roughly speaking, for each fixed $z$, if the conditional densities $p_Y^z(y)$ and $q_Y^z(y) \in \calC^{s}([0,1]^{d_Y})$, then typically the corresponding potentials $\varphi(\cdot,z) \in \mathcal C^{s+2}([0,1]^{d_Y})$, under suitable regularity and boundary conditions. See, for instance, \cite[Chapter~12]{villani2008optimal}.

\begin{proposition}[Stability with quadratic objective]\label{prop:stability.COT}
     Under \Cref{a:boundedness_assp} ($\gamma$), and further assume that 
     \begin{enumerate}[label=(\roman*)]
         \item $\varphi(\cdot, z) \in \calC^2([0,1]^{d_Y})$ for any $z \in [0,1]^{d_Z}$.
         \item $C_2 :=\sup_{z\in[0,1]^{d_Z}}\|\varphi(\cdot, z)\|_{\calC^2([0,1]^{d_Y})}  < \infty$.
     \end{enumerate}
      Then there exists a constant $\lambda > 0$ that depends on $\gamma, C_2$ such that    
    \begin{align*}        
        0 \leq & \Cwass_2(\widehat P_n^{\dagger},\widehat Q_n^{\dagger})^2 - \Cwass_2(\PP,\QQ)^2 - \int \phi \diff (\widehat P_n^{\dagger} - \PP) 
        - \int \psi \diff (\widehat Q_n^{\dagger} - \QQ) \\
        \leq & 2\lambda \int W^2_2(\widehat \PP_Y^z, \PP_Y^z) + W^2_2(\widehat \QQ_Y^z, \QQ_Y^z) \widehat R_{n}(\diff z),
    \end{align*}
    where $\phi(y,z) = \|y\|^2_2 - \varphi(y,z)$ and $\psi(y,z) = \|y\|^2_2 - \varphi^*(y,z)$, with $\varphi^*(y,z) := \sup_{y' \in [0,1]^{d_Y}} \{y^\top y' - \varphi(y', z)\}$.
\end{proposition}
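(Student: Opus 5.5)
The plan is to reduce the conditional statement to a family of unconditional OT stability bounds, one for each $z$, and then integrate them against $\widehat R_n$.

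\textbf{Step 1: disintegrate the population value.} Since $\PP_Z=\QQ_Z$, we have
\[
\Cwass_2(\PP,\QQ)^2=\int W_2^2(\PP_Y^z,\QQ_Y^z)\,\PP_Z(\diff z).
\]
For each fixed $z$, Kantorovich duality for the quadratic cost with Brenier potential $\varphi(\cdot,z)$ gives
\[
W_2^2(\PP_Y^z,\QQ_Y^z)=\int\phi(\cdot,z)\,\diff\PP_Y^z+\int\psi(\cdot,z)\,\diff\QQ_Y^z,
\]
with $\phi,\psi$ as in the statement, normalized so that $\phi(y,z)+\psi(y',z)\le\|y-y'\|_2^2$. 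Integrating in $z$ yields $\Cwass_2(\PP,\QQ)^2=\int\phi\,\diff\PP+\int\psi\,\diff\QQ$. Measurability of $z\mapsto\varphi(\cdot,z)$ follows from a measurable-selection argument. Uniqueness of Brenier potentials up to constants makes the choice canonical, since we may fix the constant, e.g.\ $\varphi(0,z)=0$.

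\textbf{Step 2: disintegrate the estimated value.} By construction, $\widehat P_n^\dagger$ and $\widehat Q_n^\dagger$ share the $Z$-marginal $\widehat R_n$. Hence
\[
\Cwass_2(\widehat P_n^\dagger,\widehat Q_n^\dagger)^2=\int W_2^2(\widehat\PP_Y^z,\widehat\QQ_Y^z)\,\widehat R_n(\diff z).
\]
We also have $\int\phi\,\diff\widehat P_n^\dagger=\int\!\int\phi(\cdot,z)\,\diff\widehat\PP_Y^z\,\widehat R_n(\diff z)$, and similarly for $\psi$. Subtracting Step 1, the $\PP,\QQ$ terms cancel exactly, and the quantity of interest equals
\[
\int\Big[W_2^2(\widehat\PP_Y^z,\widehat\QQ_Y^z)-\int\phi(\cdot,z)\,\diff\widehat\PP_Y^z-\int\psi(\cdot,z)\,\diff\widehat\QQ_Y^z\Big]\widehat R_n(\diff z).
\]
It therefore suffices to bound the bracket pointwise in $z$ from below by $0$ and from above by $2\lambda\big(W_2^2(\widehat\PP_Y^z,\PP_Y^z)+W_2^2(\widehat\QQ_Y^z,\QQ_Y^z)\big)$.

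\textbf{Step 3: the lower bound.} This is immediate from weak duality. The pair $(\phi(\cdot,z),\psi(\cdot,z))$ is dual-feasible for every pair of marginals, so the bracket is nonnegative for each $z$.

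\textbf{Step 4: the upper bound.} This is the smooth-potential OT stability argument of \cite{manole2024plugin}, applied at each $z$. Fix $z$ and write $T=\nabla\varphi(\cdot,z)$. Let $\pi_1$ be an optimal coupling of $(\widehat\PP_Y^z,\PP_Y^z)$ and $\pi_2$ an optimal coupling of $(\QQ_Y^z,\widehat\QQ_Y^z)$. Glue $\pi_1$, the map $T$, and $\pi_2$ to obtain a coupling of $(\widehat\PP_Y^z,\widehat\QQ_Y^z)$. This coupling bounds $W_2^2(\widehat\PP_Y^z,\widehat\QQ_Y^z)$ from above, so the bracket is at most the gluing cost minus the two dual integrals. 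Two facts control the resulting expression:
\begin{itemize}
\item The Fenchel--Young gap $\varphi(y)+\varphi^*(y')-y^\top y'$ vanishes when $y'=T(y)$.
\item A second-order Taylor expansion, using $\nabla^2\varphi\preceq C_2 I$ and $\nabla^2\varphi^*\preceq \lambda_{\min}^{-1}I$, bounds the remaining terms by quadratic displacement terms.
\end{itemize}
Together these bound the bracket by a constant times $\int\|y-\tilde y\|^2\,\diff\pi_1+\int\|y'-\tilde y'\|^2\,\diff\pi_2$, which is the desired right-hand side.

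The upper bound on $\nabla^2\varphi^*$ requires uniform strong convexity of $\varphi(\cdot,z)$, which I would obtain from the Monge--Amp\`ere equation:
\[
\det\nabla^2\varphi(y,z)=\frac{p_Y^z(y)}{q_Y^z(\nabla\varphi(y,z))}.
\]
Since $p_Y^z=p(\cdot,z)/p_Z(z)$ and $p_Z\in[\gamma^{-1},\gamma]$ on the unit cube, the right-hand side is bounded below by a constant $c(\gamma)>0$. Combined with all eigenvalues being at most $C_2$, this gives $\lambda_{\min}(\nabla^2\varphi)\ge c(\gamma)C_2^{-(d_Y-1)}$. This yields $\lambda=\lambda(\gamma,C_2)$, uniformly in $z$.

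\textbf{Main obstacle.} I expect Step 4 to be the hardest part. The delicate points are, first, that $\varphi^*$ is a supremum over $[0,1]^{d_Y}$, so its smoothness is only guaranteed on $\nabla\varphi([0,1]^{d_Y})$. Second, the Taylor expansion must be carried out only at points where the gluing places mass in that region. I would first try to keep every expansion on the $\varphi$ side, expanding around the true point $y\sim\PP_Y^z$ using $C_2$-smoothness, and use strong convexity only through $\|T(y)-T(\tilde y)\|\ge\lambda_{\min}\|y-\tilde y\|$, i.e.\ only to transfer displacements between the two sides. This avoids ever differentiating $\varphi^*$ outside its good region.
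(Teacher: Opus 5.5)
Your proposal is correct and has the same architecture as the paper's proof. You apply the unconditional stability bound of \cite{manole2024plugin} to $(\PP_Y^z,\QQ_Y^z)$ for each $z$, then integrate against $\widehat R_n$, using that $\widehat P_n^\dagger$ and $\widehat Q_n^\dagger$ share the $Z$-marginal. The differences are in the bookkeeping and in how one ingredient is justified.

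\textbf{Bookkeeping.} The paper integrates the per-$z$ inequality term by term. This produces correction terms of the form $\int(\cdot)\,(\widehat R_n-R)(\diff z)$, which it then shows cancel via $W_2^2(\PP_Y^z,\QQ_Y^z)=\int\phi_z\,\diff\PP_Y^z+\int\psi_z\,\diff\QQ_Y^z$. Your Step 2 subtracts the disintegrated population identity directly. It lands on the same pointwise bracket with no correction terms, which is cleaner.

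\textbf{Strong convexity.} For uniform strong convexity the paper cites \Cref{lem:convex_lam}. Your Monge--Amp\`ere argument instead gives $\lambda_{\min}(\nabla_y^2\varphi)\ge c(\gamma)C_2^{-(d_Y-1)}$ from first principles. It also explains why $\lambda$ depends on $C_2$.

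\textbf{The stability bound itself.} The paper just invokes Proposition~12 of \cite{manole2024plugin}, while you re-derive it. Your concern about the domain of $\varphi^*$ is handled exactly as you suggest. Strong convexity of $\varphi(\cdot,z)$ on the convex cube gives, for $x\in[0,1]^{d_Y}$ and $y=\nabla_y\varphi(x,z)$,
\[
\varphi^*(\hat y,z)\le x^\top\hat y-\varphi(x,z)+\tfrac{\lambda}{2}\|\hat y-y\|_2^2 .
\]
So the Fenchel--Young gap at $(\hat x,\hat y)$ is at most a constant times $\|\hat x-x\|_2^2+\|\hat y-y\|_2^2$, without ever differentiating $\varphi^*$.

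\textbf{A caveat shared with the paper.} Take $\phi=\|y\|_2^2-\varphi$ and $\psi=\|y\|_2^2-\varphi^*$ literally as in the statement. Then the duality identity in your Step 1 is false. No normalization of the additive constant repairs it: $\varphi\mapsto\varphi+c$ sends $\varphi^*\mapsto\varphi^*-c$ and leaves $\phi+\psi$ unchanged. For example, with $\PP_Y^z=\QQ_Y^z$ uniform on $[0,1]$, the right-hand side equals $1/3$ instead of $0$. The correct potentials for the cost $\|y-y'\|_2^2$ are $\|y\|_2^2-2\varphi$ and $\|y\|_2^2-2\varphi^*$, for which $\|y-y'\|_2^2-\phi(y)-\psi(y')=2\big(\varphi(y)+\varphi^*(y')-y^\top y'\big)\ge0$. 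With this factor inserted, your Steps 1--4 go through verbatim.
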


When $s$ is sufficiently large, the right-hand side of the above inequality is of order $o_p(n^{-1/2})$, as implied by Eq.~\eqref{eq:stability_I} and \Cref{thm:main_thm1}. Thus, the target converges to the distributional limit of $\int \phi \diff (\widehat P_n^{\dagger} - \PP) + \int \psi \diff (\widehat Q_n^{\dagger} - \QQ)$. 
As a result, we have the following result.
\begin{theorem}\label{theo:CLT}
Under \Cref{a:boundedness_assp} ($\gamma$) - \ref{a:density_smooth} ($s$), and that $p$, $q$, $\varphi \in \mathcal{C}^{s+1}([0,1]^{d_Y+d_Z})$, $2s > d_Y + d_Z$,
    \begin{align*}
         \sqrt{n}\left(\Cwass^2_2(\widehat \PP_n^\dagger,\widehat \QQ^\dagger_n) - \Cwass^2_2(\PP,\QQ)^2 \right) 
         \stackrel{d}{\to}
         \mathcal N(0, \sigma^2),
    \end{align*}
    where $\sigma^2 := 2 \left(\var_{\PP}(\eta(Y,Z))+\var_{\QQ}(\kappa(Y,Z))\right)$ with
    \begin{align*}
        &\eta(y,z):=\phi(y,z)
        -\frac{1}{2}\left(\int\left(\phi(y',z) \frac{p(y',z)}{p_Z(z)}-\psi(y',z) \frac{q(y',z)}{q_Z(z)}\right) \diff y'\right), \\
        &\kappa(y,z):=~\psi(y,z)
        -\frac{1}{2}\left(\int\left(\psi(y',z) \frac{q(y',z)}{q_Z(z)}-\phi(y',z) \frac{p(y',z)}{p_Z(z)}\right) \diff y'\right).
    \end{align*}
\end{theorem}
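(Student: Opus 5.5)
The plan is to combine the two-sided stability bound of \Cref{prop:stability.COT} with a linearization of the first-order term. First, a remainder bound. Since $2s>d_Y+d_Z$, \Cref{prop:core_rate_main} (applied to both $\widehat\PP$ and $\widehat\QQ$) gives
\[
\EE\!\int W_2^2(\widehat\PP_Y^z,\PP_Y^z)+W_2^2(\widehat\QQ_Y^z,\QQ_Y^z)\,\widehat R_n(\diff z)\lesssim n^{-\frac{2s}{2s+d_Y+d_Z}}=o(n^{-1/2}).
\]
We must check that the hypotheses of \Cref{prop:stability.COT} hold: $\varphi\in\calC^{s+1}$ with $s+1\ge 2$ in $y$ uniformly in $z$ gives (i)–(ii). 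By Markov's inequality, the stability sandwich then reduces the claim to showing
\[
\sqrt n\Big(\int\phi\,\diff(\widehat P_n^\dagger-\PP)+\int\psi\,\diff(\widehat Q_n^\dagger-\QQ)\Big)\stackrel{d}{\to}\mathcal N(0,\sigma^2).
\]

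Second, I linearize the functional $L_n:=\int\phi\,\diff(\widehat P_n^\dagger-\PP)+\int\psi\,\diff(\widehat Q_n^\dagger-\QQ)$ in the density errors. Write $\bar\phi(z)=\int\phi(y,z)p(y\mid z)\diff y$ and $\bar\psi(z)=\int\psi(y,z)q(y\mid z)\diff y$, and use $p_Z=q_Z$. Since $\widehat P_n^\dagger=\widehat\PP^z_{n,Y}\otimes\widehat R_n$, we have
\[
\int\phi\,\diff\widehat P_n^\dagger=\int\!\!\int\phi\,\frac{\hat p(y,z)}{\hat p_Z(z)}\diff y\,\widehat R_n(z)\diff z .
\]
Expanding $\hat p/\hat p_Z$ and $\widehat R_n=(\hat p_Z+\hat q_Z)/2$ to first order in $\hat p-p$ and $\hat q-q$ gives
\[
L_n=\int\Big(\phi-\bar\phi+\tfrac12(\bar\phi+\bar\psi)\Big)\diff(\hat p-p)+\int\Big(\psi-\bar\psi+\tfrac12(\bar\phi+\bar\psi)\Big)\diff(\hat q-q)+\mathrm{Rem}_n .
\]
Using $\int \diff(\hat p-p)=0$, the $P$-integrand equals $\phi-\tfrac12(\bar\phi-\bar\psi)$ up to an additive term, which is $\eta$. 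Similarly, the $Q$-integrand is $\kappa$. The quadratic remainder $\mathrm{Rem}_n$ consists of products such as $\int(\hat p_Z-p_Z)\,(\hat p(\cdot\mid z)-p(\cdot\mid z))$. These are controlled by $\|\hat p-p\|_{L^2}^2+\|\hat q-q\|_{L^2}^2=O_p(n^{-2s/(2s+d)})=o_p(n^{-1/2})$, using the lower bound $\gamma^{-1}$ on densities. I also need the normalization/truncation in \eqref{eq:wavelet.estimator} to be inactive with probability tending to one. This follows from a sup-norm rate for $\widetilde f_{J_n}$ together with $p\ge\gamma^{-1}$.

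Third, I handle the linear terms $\int\eta\,\diff(\hat p-p)$ and $\int\kappa\,\diff(\hat q-q)$. Let $\Pi_{J_n}$ be the wavelet projection. Then
\[
\int\eta\,\diff(\widetilde p_{J_n}-p)=\frac1n\sum_i\big((\Pi_{J_n}\eta)(X_i)-\EE\,\Pi_{J_n}\eta\big)+\int(\Pi_{J_n}\eta-\eta)(p_{J_n}-p)\,.
\]
The bias term is bounded by $\|\eta-\Pi_{J_n}\eta\|_{L^2}\|p-\Pi_{J_n}p\|_{L^2}\lesssim 2^{-J_n(2s+1)}$. The needed regularity of $\eta$ comes from $\phi,\psi,p,q\in\calC^{s+1}$ and \Cref{prop:embedding}, and it gives $o(n^{-1/2})$. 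The stochastic term differs from $\frac1n\sum(\eta(X_i)-\EE\eta)$ by a mean-zero term with variance at most $\|\Pi_{J_n}\eta-\eta\|_{L^2}^2/n\to0$. The two samples are independent, so the Lindeberg CLT gives
\[
\sqrt n\,L_n\to\mathcal N\big(0,\var_\PP(\eta)+\var_\QQ(\kappa)\big).
\]
I would then reconcile this with the factor $2$ in $\sigma^2$ via the paper's normalization of the sample sizes in the design \eqref{eq:CRD.half.treated}.

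\textbf{Main obstacle.} I expect the hardest part to be the second step, in particular controlling the conditional-density ratio remainders uniformly in $z$ and the division by $\hat p_Z$. I would first try to establish $\|\hat p-p\|_\infty=O_p(n^{-s/(2s+d)}\sqrt{\log n})$ from standard wavelet sup-norm bounds. Together with $p_Z\ge\gamma^{-1}$, this turns every remainder into a product of two errors of order $o_p(n^{-1/4})$.
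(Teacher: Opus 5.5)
Your route is the paper's route. You use the sandwich of \Cref{prop:stability.COT}, with its remainder made $o_p(n^{-1/2})$ by \Cref{prop:core_rate_main} and $2s>d_Y+d_Z$. You then linearize using $\hat r-\hat p_Z=(\hat q_Z-\hat p_Z)/2$ and $p_Z=q_Z$, reducing everything to $\int\eta\,(\hat p-p)+\int\kappa\,(\hat q-q)$; your integrand $\phi-\bar\phi+\tfrac12(\bar\phi+\bar\psi)$ equals $\eta$ exactly, with no additive constant needed.

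The paper organizes your second step more economically. It writes $\int\phi\,\diff(\widehat P_n^\dagger-\widehat P_n)=\int\phi\,\frac{\hat p}{\hat p_Z}(\hat r-\hat p_Z)$ exactly and then replaces $\hat p/\hat p_Z$ by $p/p_Z$. The only remainder is then a single product, handled by Cauchy--Schwarz and $L^2$ rates on the event $\inf_z\hat p_Z\ge\gamma^{-1}/2$ from \Cref{lem:density_bound}. So the sup-norm rate you flag as the main obstacle is not needed.

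Your third step is a self-contained substitute for the paper's appeal to Lemmas 11 and 66 of \parencite{manole2024plugin}. It uses smoothness of $\eta$ and $\kappa$, hence of $\psi$, and that is not among the hypotheses. You have to derive it from $\varphi\in\calC^{s+1}$ through the Legendre transform and uniform strong convexity, as the paper does in \Cref{lem:smooth_kan}.

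The step that does not go through is the last one, reconciling the factor $2$. With independent samples of size $n$ from each arm and scaling $\sqrt n$, your argument gives the limit $\mathcal N\big(0,\var_\PP(\eta)+\var_\QQ(\kappa)\big)$. The design \eqref{eq:CRD.half.treated} offers nothing to reconcile: it is exactly what makes $\calS_0$ and $\calS_1$ independent i.i.d.\ samples of size $n$. Indeed $\int\eta\,(\widetilde p_{J_n}-p)=\frac1n\sum_i(\Pi_{J_n}\eta)(X_i)-\EE_{\PP}[\eta]$ has variance about $\var_\PP(\eta)/n$.

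The factor $2$ in $\sigma^2$ would arise only if $n$ denoted the total number of units ($n/2$ per arm), or equivalently if the statistic were scaled by $\sqrt{2n}$. The paper's proof does not derive this factor either; it simply asserts that $\sqrt n$ times Term (c) has asymptotic variance $2\var_\PP(\eta)$. So rather than promising a reconciliation, state the variance you actually obtain, and note that the stated $\sigma^2$ corresponds to the other normalization.
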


A key step in proving \Cref{theo:CLT} is to establish a central limit theorem for
\[
\int \phi \, d(\widehat P_n^{\dagger} - \PP)
+
\int \psi \, d(\widehat Q_n^{\dagger} - \QQ).
\]
Although $\widehat P_n^{\dagger}$ and $\widehat Q_n^{\dagger}$ are dependent through the shared estimator $\widehat R$ (thus the argument in \parencite{manole2023central} cannot be applied directly), the identity
$\widehat r - \widehat p = (\widehat q - \widehat p)/2$
allows a decomposition that yields the desired CLT; see \Cref{appe:sec:proof:theo:CLT}.


\begin{remark}[$d_Z = 0$]
    When there is no covariate, the central limit theorem can be derived with milder assumption. For example, for $d_Y = 1$, \cite{del2005asymptotics} shows the CLT of the plug-in estimator (using the empirical distribution) when $p,q \in \calC^1([0,1])$; \cite{manole2024plugin} extends the CLT of wavelet-based estimator for general $d_Y$ when $p, q \in \calC^s([0,1]^{d_Y}), s > d_Y/2 - 2$. By contrast, this work extends it to a more general result with covariate and identifies a similar smoothness condition $s > (d_Y + d_Z)/2$. 
\end{remark}

\begin{remark}[$\PP = \QQ$]
    The asymptotic variance $\sigma^2$ is positive if and only if the conditional distributions $P_{Y\mid Z}$ and $Q_{Y\mid Z}$ differ on a set of $z$ with positive probability.
When the two conditional distributions coincide, i.e., $P_{Y\mid Z}=Q_{Y\mid Z}$, the conditional Wasserstein distance is zero, and  
$\Cwass_2^2(\widehat P_n^\dagger,\widehat Q_n^\dagger)$ converges to zero faster than the order of $n^{-1/2}$. 
\end{remark}

\begin{remark}[Asymptotic variance estimation]
The asymptotic variance $\sigma^2$ in \Cref{theo:CLT} can be consistently estimated using plug-in estimators of the conditional potential functions. 
When these quantities are difficult to estimate, subsampling-based approaches, particularly the bootstrap, can be used to approximate the variance and construct confidence intervals.
\end{remark}

\section{Numerical Experiment}\label{sec:empirical}
The code and data are available at
\url{https://github.com/siruilin1998/causalOT_inference}.
\subsection{Estimation}\label{sec:empirical.estimation}

\begin{figure*}[tbp]
        \centering
        \begin{minipage}{0.32\textwidth}
                \centering
                \includegraphics[clip, trim = 0cm 0cm 0cm 0.75cm, width = 1\textwidth]{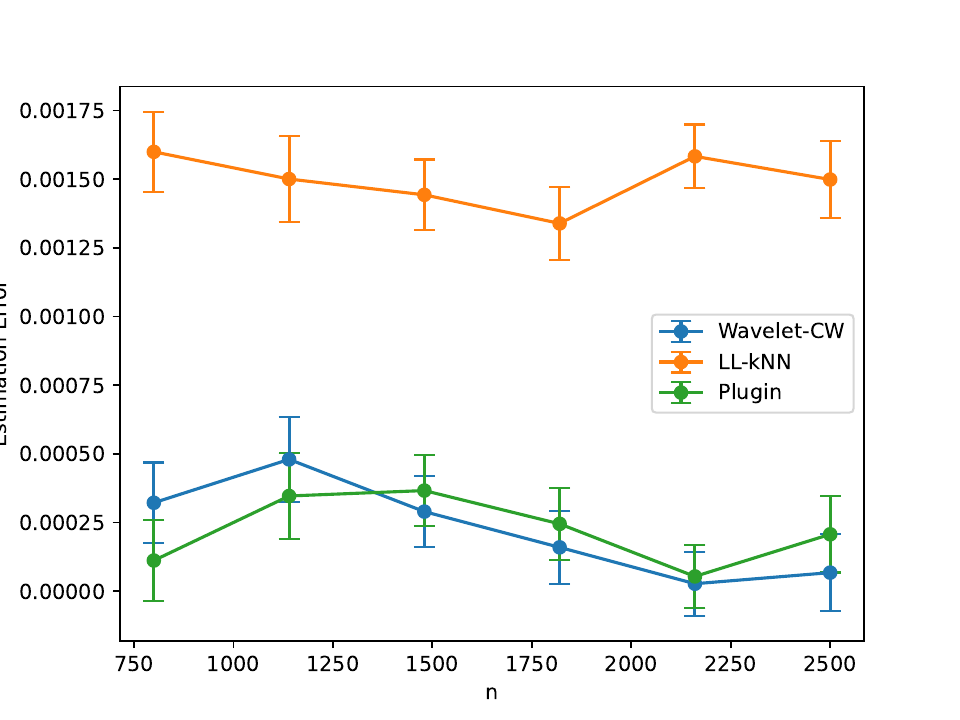}
                \subcaption{Location model}
        \end{minipage}
         \begin{minipage}{0.32\textwidth}
                \centering
                \includegraphics[clip, trim = 0cm 0cm 0cm 1cm, width = 1\textwidth]{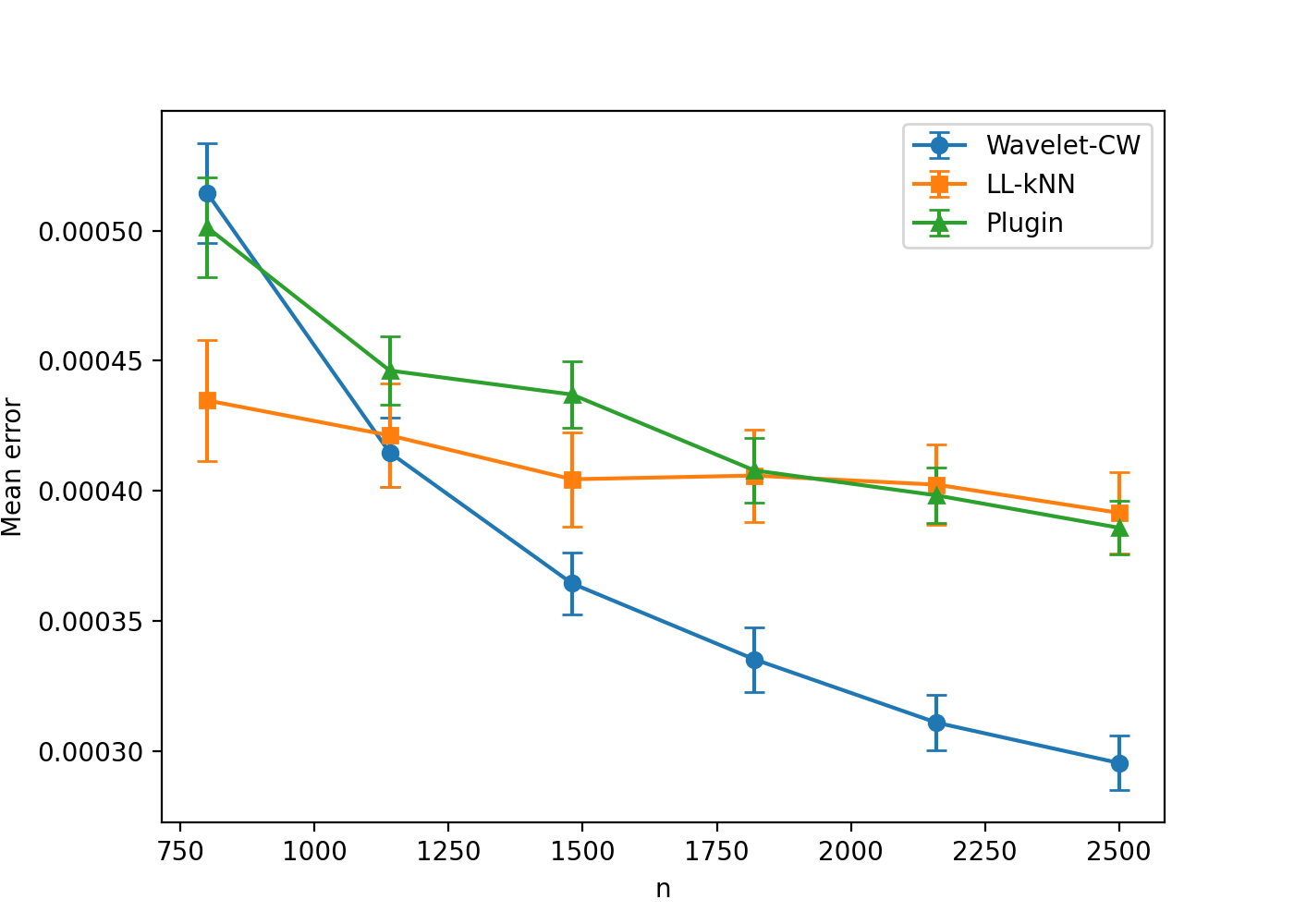}
               \subcaption{Quadratic model} 
        \end{minipage}
         \begin{minipage}{0.32\textwidth}
                \centering
                \includegraphics[clip, trim = 0cm 0cm 0cm 1cm, width = 1\textwidth]{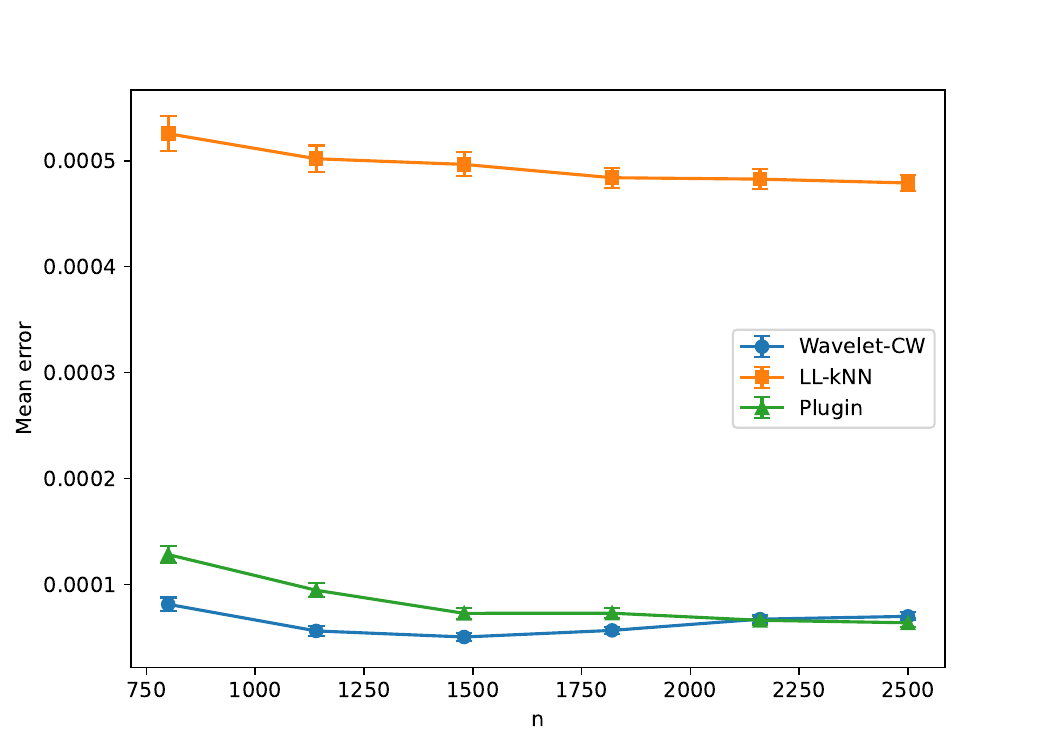}
                \subcaption{Scale model}
        \end{minipage}
        \caption{Plots of estimation error comparison of our method (\textit{Wavelet CW estimate}) with \parencite{ji2023model} (\textit{LL-kNN}, for which we use `knn''-based nuisance estimator) and \parencite{lin2025estimation} (\textit{Plugin}) with $d_Y=1$, $d_Z=1$. The estimation error is defined by $\EE[|V_{\cc} - \widehat V_{\cc}|]$. The mean error curve and the corresponding standard-error confidence bands are computed by aggregating results over 100 Monte Carlo repetitions. The plots show that the convergence rate of our method is at least comparable with the existing approaches in low dimension.
        }
        \label{fig:convergence_rate_dy1}
\end{figure*}

\begin{figure*}[h!]
        \centering
        \begin{minipage}{0.32\textwidth}
                \centering
                \includegraphics[clip, trim = 0cm 0cm 0cm 1.35cm, width = 1\textwidth]{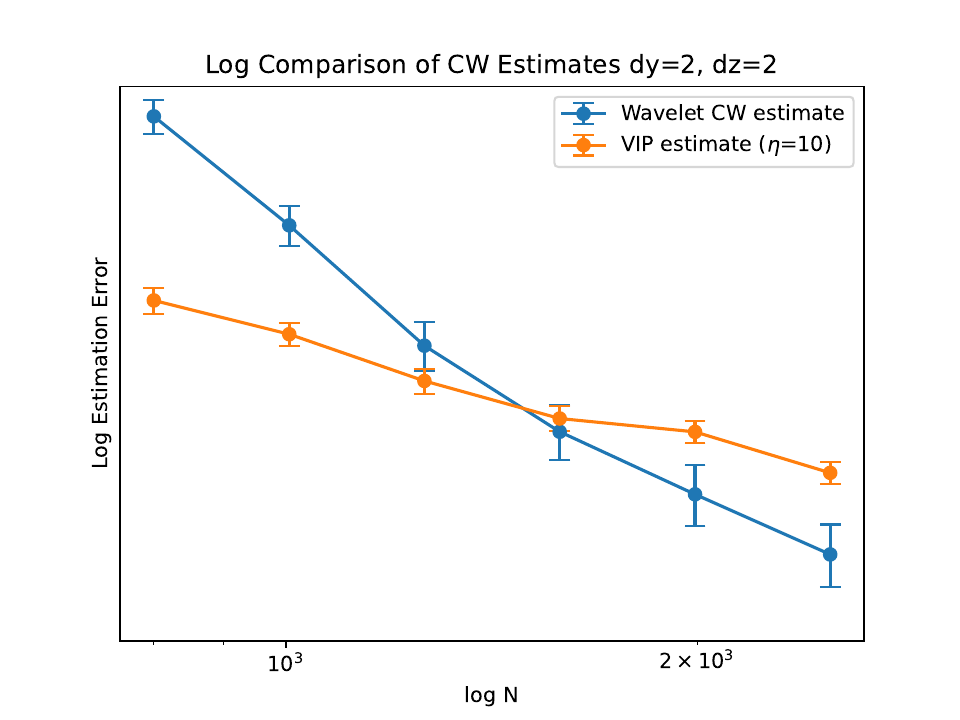}
                \subcaption{Location model}
        \end{minipage}
         \begin{minipage}{0.32\textwidth}
                \centering
                \includegraphics[clip, trim = 0cm 0cm 0cm 1.35cm, width = 1\textwidth]{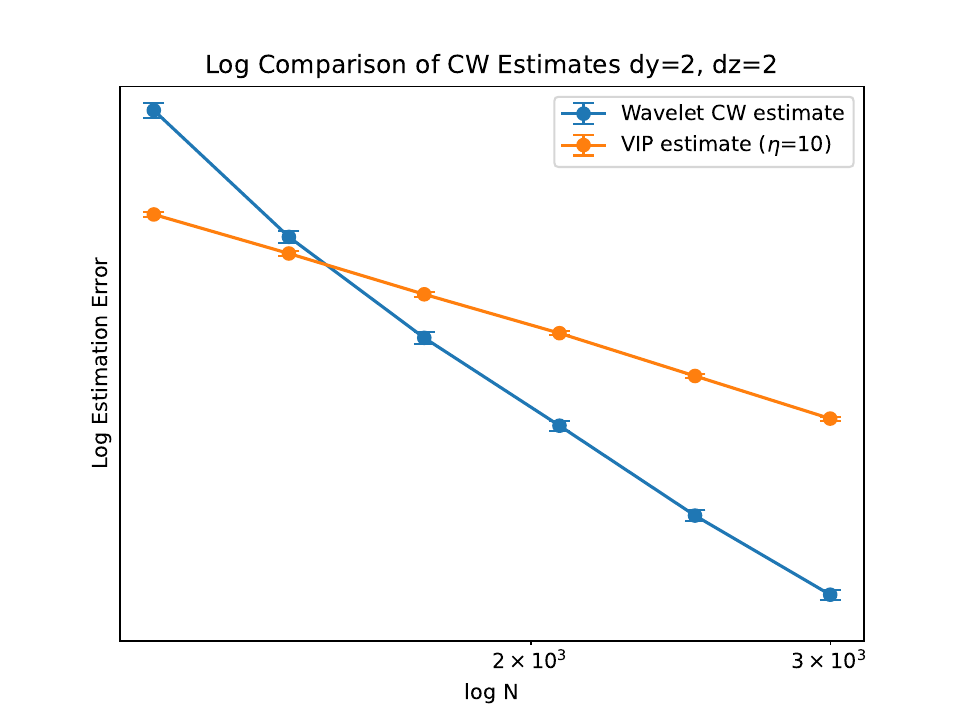}
               \subcaption{Quadratic model} 
        \end{minipage}
         \begin{minipage}{0.32\textwidth}
                \centering
                \includegraphics[clip, trim = 0cm 0cm 0cm 1.35cm, width = 1\textwidth]{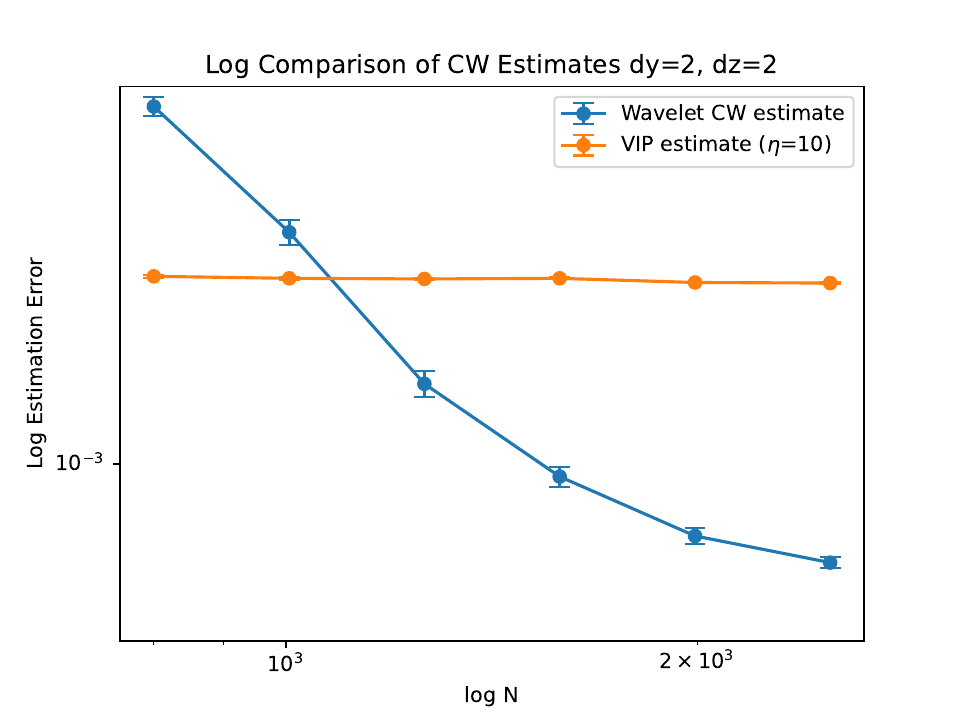}
                \subcaption{Scale model}
        \end{minipage}
        \caption{Log-log plots of estimation error comparison of our method (\textit{Wavelet CW estimate}) and \parencite{lin2025tightening} (\textit{VIP estimate} with $\eta = 10$) with $d_Y=2$, $d_Z=2$. The estimation error is defined by $\EE[|V_{\cc} - \widehat V_{\cc}|]$. The mean error curve and the corresponding standard-error confidence bands are computed by aggregating over 300 Monte Carlo repetitions. The plots show that the convergence rate of our method is significantly faster than the rate of VIP estimate, which suffers from the curse of dimensionality based on its theory.
        }
        \label{fig:convergence_rate_loglog}
\end{figure*}

We empirically compare the convergence rate of our estimator (\Cref{sec:inference}) with existing approaches including \parencite{ji2023model, lin2025tightening, lin2025estimation}.

\paragraph{Data generation mechanism.} 
We outline a default data-generating mechanism. Each dimension of the covariate $Z \in [0,1]^{d_Z}$ are drawn i.i.d.\ from $\mathrm{Uniform}(0,1)$. The potential outcomes $Y$ are drawn from
\begin{align*}  
Y(w)\mid Z=z \sim \mathcal{N}\big(\mu_w(z), \Sigma_w(z)\big), 
\quad w\in\{0,1\}.
\end{align*}
The objective function is quadratic, $h = \|y_0 - y_1\|_2^2$. This underlying distribution and objective enable a closed form evaluation of the COT value thanks to the Gelbrich formula (Proposition~\ref{prop:Gelbrich}). Note that we shall choose the parameter $\mu_w(z), \Sigma_w(z)$ appropriately such that most of its measure is supported on $[0,1]^{d_Y}$.

Specifically, we consider three data models.
\begin{enumerate}
    \item (Location model) $\mu_w(z) = \alpha_w z + \beta_w$, $\Sigma_w(z) \equiv \Sigma_w$.
    \item (Quadratic model) $\mu_w(z) = A_w (z - \alpha_w)^2 + \beta_w$, $\Sigma_w(z) \equiv \Sigma_w$, where $(z - \alpha_w)^2$ is an entry-wise square.
    \item (Scale model) $\mu_w(z) \equiv \alpha_w, \Sigma_w(z) = (\zeta_w^\top z) \times \Sigma_w$, where $(\zeta_w^\top z)$ is a scalar.
\end{enumerate}

\paragraph{Implementation.}We implement our method following Algorithm~\ref{alg:bc_wavelet_joint_density}, Algorithm~\ref{alg:example} with
the wavelet estimator parameters specified in \Cref{appe:sec:numerical.experiment}. The method \parencite{ji2023model} is implemented using \texttt{DualBounds} \footnote{\url{https://dualbounds.readthedocs.io/en/latest/index.html}}. The methods from \parencite{lin2025tightening, lin2025estimation} are implemented using the public code they upload with the paper.

\begin{table*}[tbp]
\centering
\caption{
Coverage of the proposed $95\%$ confidence intervals across different settings. 
We consider three choices of $(d_Y, d_Z)$, each evaluated at three sample sizes. 
Confidence intervals are constructed using the bootstrap with $100$ resamples. 
Reported coverage rates are aggregated over $200$ Monte Carlo repetitions.
}\label{tab:coverage}
\begin{tabular}{c|ccc|ccc|ccc}
\toprule
{Dimension} 
& \multicolumn{3}{c|}{$d_Y=2$, $d_Z=1$} 
& \multicolumn{3}{c|}{$d_Y=2$, $d_Z=2$} 
& \multicolumn{3}{c}{$d_Y=3$, $d_Z=2$} \\
{Sample size ($n$)} 
& $1000$ & $2000$ & $3000$
& $1000$ & $2000$ & $3000$
& $4000$ & $5000$ & $6000$\\
\midrule
Coverage
& 92.5\% & 93\%  &  93.5\%
& 86\% & 98.5\% & 97.5\%
& 90\% & 93.5\%  & 96\%  \\
\bottomrule
\end{tabular}
\end{table*}

\begin{figure*}[tbp]
        \centering
        \begin{minipage}{0.32\textwidth}
                \centering
                \includegraphics[clip, trim = 0cm 0cm 0cm 0.75cm, width = 1\textwidth]{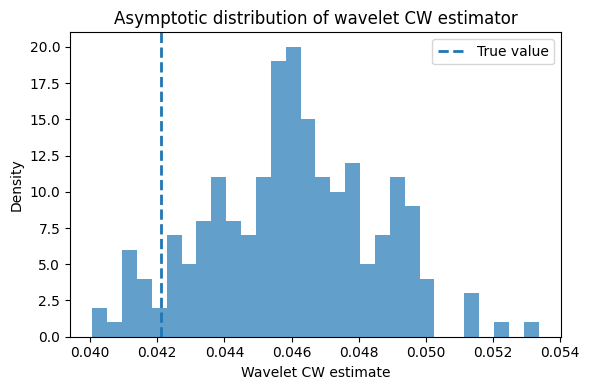}
                \subcaption{$n = 1000$}
        \end{minipage}
         \begin{minipage}{0.32\textwidth}
                \centering
                \includegraphics[clip, trim = 0cm 0cm 0cm 0.75cm, width = 1\textwidth]{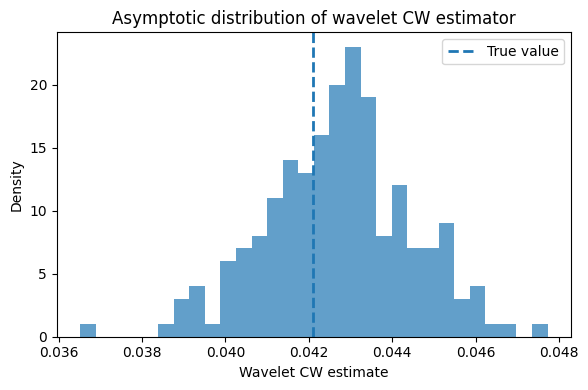}
               \subcaption{$n = 2000$} 
        \end{minipage}
         \begin{minipage}{0.32\textwidth}
                \centering
                \includegraphics[clip, trim = 0cm 0cm 0cm 0.75cm, width = 1\textwidth]{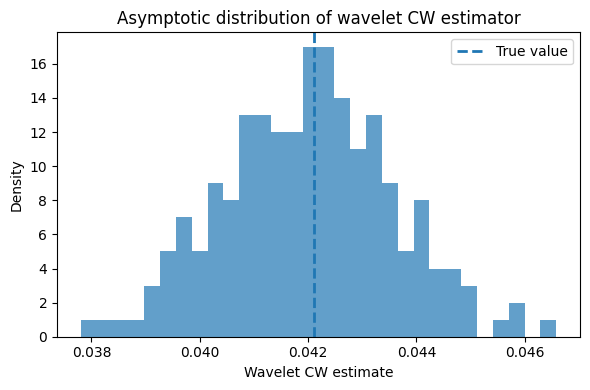}
                \subcaption{$n = 3000$}
        \end{minipage}
        \caption{Histogram of the proposed estimator with the true value marked by a vertical dashed line. 
        Here the default setting $d_Y=2$, $d_Z=2$ is adopted. The results are aggregated over $200$ Monte Carlo repetitions.
        }
        \label{fig:asymptotic.distribution}
\end{figure*}

\paragraph{Case i. ($d_Y = 1$)} We first compare our method with \parencite{ji2023model} and \parencite{lin2025estimation} in the setting of $d_Y = d_Z = 1$ across the three models. Specifically, for \parencite{ji2023model}, we use the knn-based (nonparametric) nuisance estimator to avoid model misspecification. The plots of estimation error comparison are shown in Figure~\ref{fig:convergence_rate_dy1}. The plots imply that in low-dimensional cases, our method performs comparably to existing approaches, consistent with the fact that smoothness assumptions do not improve convergence rates beyond the parametric regime.

\paragraph{Case ii. ($d_Y > 1$)} Since neither the approach of \parencite{ji2023model} nor \parencite{lin2025estimation} is implemented for $d_Y > 1$, we focus on comparing with the VIP estimate of \parencite{lin2025tightening}, for which we take $\eta = 10$ so that the estimator is asymptotically unbiased. The log-log plots of the estimation error comparison of these two methods are shown in Figure~\ref{fig:convergence_rate_loglog}. The plots demonstrate a significant improvement of convergence rate of our method comparing to the VIP estimate across multiple scenarios, which is due to the structure of our wavelet estimator that leverages the smoothness of the underlying density functions.

\subsection{Inference}\label{sec:empirical.inference}
We empirically validate the performance of inference of our proposal. In particular, we focus on the case $d_Y > 1$, where standard methods such as quantile-based approaches are not applicable.

\paragraph{Data generation mechanism.} 
We use the location model to evaluate the inference performance. Specifically, we consider a total of three scenarios with different dimensions: $(d_Y = 2, d_Z = 1)$, $(d_Y = 2, d_Z = 2)$ as the default setting, and $(d_Y = 3, d_Z = 2)$. The details of $\mu_w(z)$, $\Sigma_w(z)$ are provided in \Cref{appe:sec:numerical.experiment.simulation.details}.
For each scenario, we consider three different sample sizes $n$, and the specific values are listed in \Cref{tab:coverage}.

\paragraph{Bootstrap.} We use the bootstrap with $100$ resamples to estimate the variance $\sigma^2$ in \Cref{theo:CLT} and plug it in to construct the confidence interval.
The alternative approach for $d_Y>1$ discussed in \Cref{sec:empirical.estimation} either has non-negligible bias or lacks an established asymptotic distribution, making it unsuitable for statistical inference. We therefore focus on our method.

\paragraph{Coverage rate.}
We report the coverage results for $95\%$ confidence intervals in \Cref{tab:coverage}. 
Overall, the coverage rates are acceptable when the sample size $n$ is moderately large.
As expected, for each configuration of $(d_Y, d_Z)$, coverage improves as the sample size increases; 
as the dimensions $d_Y$ and $d_Z$ grow, larger samples are required to achieve proper coverage.

\paragraph{Asymptotic distribution.} 
In \Cref{fig:asymptotic.distribution}, we display the distribution of the proposed estimator $\Cwass^2_2(\widehat \PP_n^\dagger,\widehat \QQ^\dagger_n)$, with the true value $\Cwass_2^2(P,Q)$ marked by a vertical dashed line. 
We focus on the default setting $d_Y=2$, $d_Z=2$; histograms for the other two settings are provided in \Cref{appe:sec:numerical.experiment}. The distribution of the estimator $\Cwass_2^2(\widehat{\PP}_n^\dagger,\widehat{\QQ}_n^\dagger)$ becomes increasingly normal as $n$ grows, consistent with the CLT in \Cref{theo:CLT}. 
The bias also decreases with larger sample sizes: for $n=1000$, the estimator is upward biased, while by $n=3000$, the bias becomes negligible.

\subsection{Real Data Analysis}

\begin{wrapfigure}{r}{0.425\textwidth}
\vspace{-0.5cm}
\centering
        \begin{minipage}{0.425\textwidth}
                \centering
\includegraphics[clip, trim = 0cm 2cm 12cm 3cm, width = 1\textwidth]{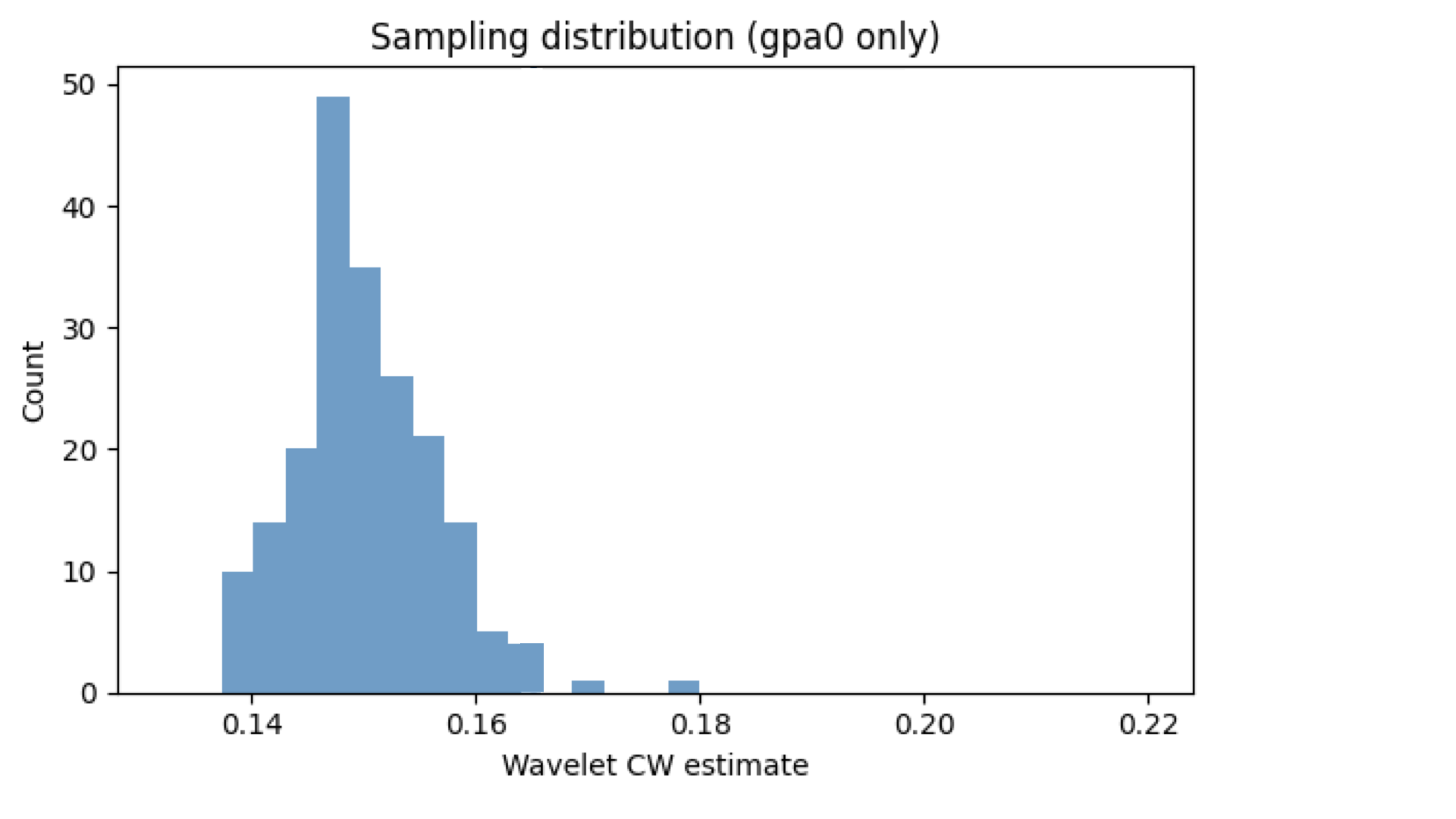}
\subcaption{Conditioning on baseline GPA only}
\vspace{0.5cm}
\end{minipage}
\begin{minipage}{0.425\textwidth}
                \centering
\includegraphics[clip, trim = 0cm 2cm 12cm 3cm, width = 1\textwidth]{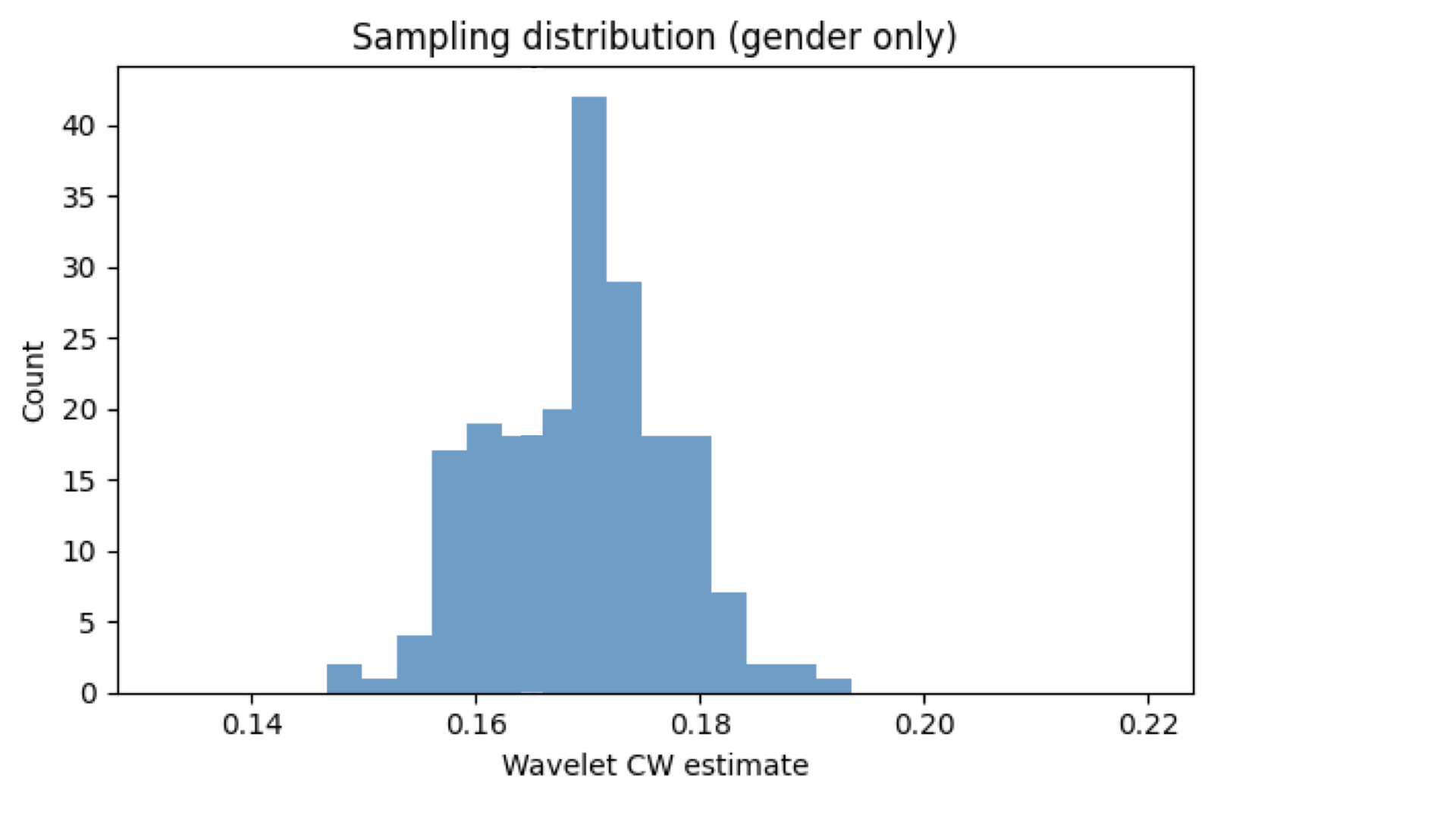}
\subcaption{Conditioning on gender only}
\vspace{0.5cm}
\end{minipage}
\begin{minipage}{0.425\textwidth}
                \centering
\includegraphics[clip, trim = 0cm 2cm 12cm 3cm, width = 1\textwidth]{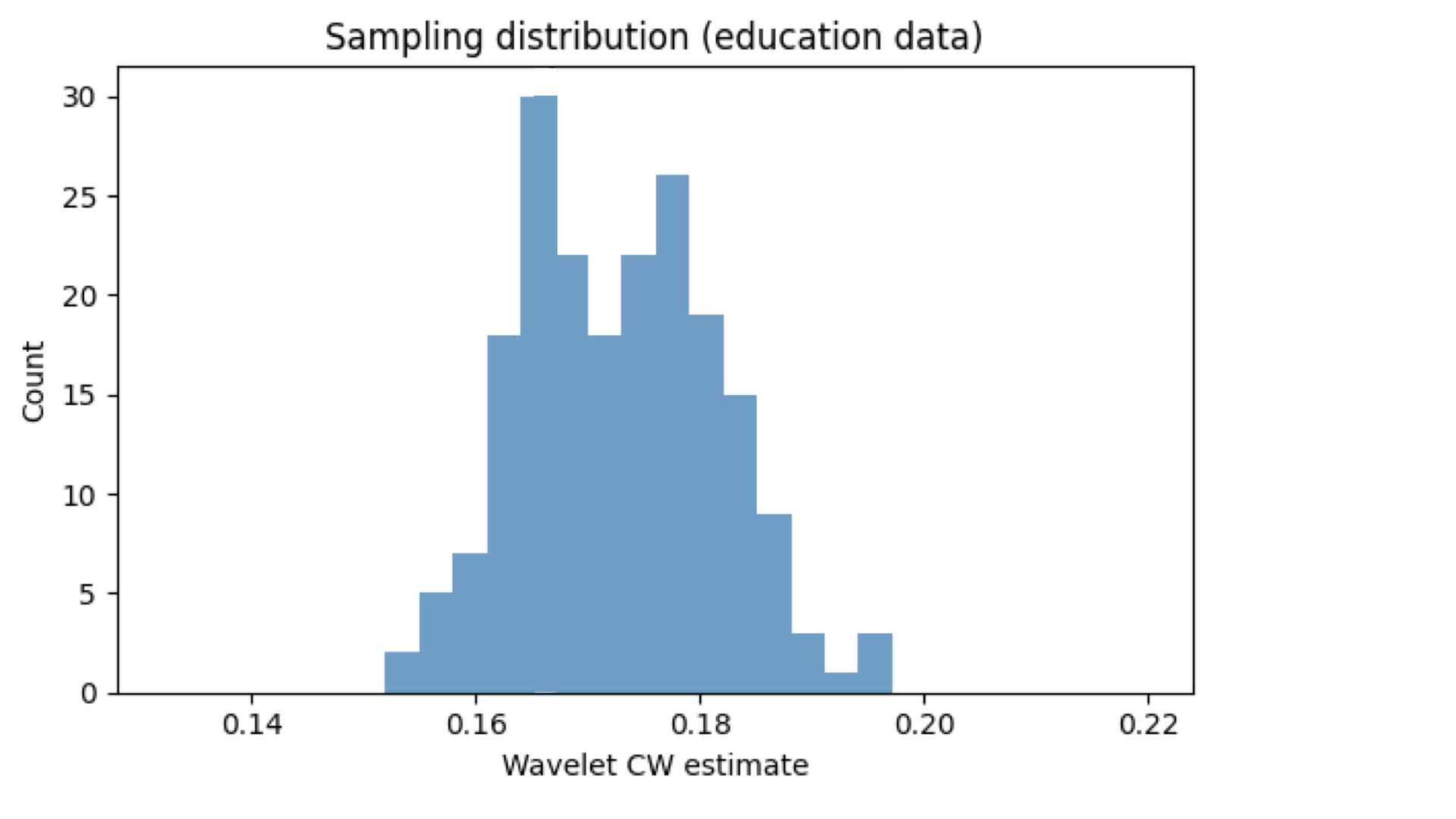}
\subcaption{Conditioning on baseline GPA and gender}
\end{minipage}
\caption{Histogram of the proposed estimator applied to hypothetical STAR datasets under different conditioning sets of covariates. Results are aggregated over $200$ simulated hypothetical datasets.}
\label{fig:education}
\vspace{-2cm}
\end{wrapfigure}

We investigate the randomized experiment from the Student Achievement and Retention Project (STAR) \parencite{angrist2009incentives}, which evaluated the effects of academic support services and financial incentives on college student performance. 
We use gender and baseline GPA (supported on $[0,4]$) as covariates, and first-year and second-year GPA (supported on $[0,4]$) as outcomes.
Because the true treatment effect is not directly observable, we construct hypothetical potential outcomes under control and treatment based on the observed outcomes. Specifically, we treat the original outcomes as control potential outcomes and generate gender-dependent treatment effects, motivated by the finding in \parencite{angrist2009incentives} that female students benefit more from the intervention. We add these treatment effects to the control outcomes to obtain treatment potential outcomes.
Based on the constructed hypothetical dataset, in each simulation replicate, we randomly sample $1300$ control and $1300$ treatment observations and use them as the observed dataset.  
We generate $200$ datasets and aggregate the results.

We apply our method to each generated dataset as described in \Cref{sec:empirical.inference}, with outcome dimension $d_Y = 2$ and covariate dimension $d_Z = 1$ or $2$.
Particularly, we consider three sets of covariates for conditioning:
\begin{itemize}
    \item [(a)] Conditioning on baseline GPA only.
    \item [(b)] Conditioning on gender only.
    \item [(c)] Conditioning on baseline GPA and gender.
\end{itemize}
By the construction of our hypothetical treatment effects, the conditional distribution under treatment differs from that under control by a location shift depending on the gender. 
Therefore, conditioning on gender, either alone or together with the baseline GPA, the optimal values of the population-wise COT recover the sharp lower bound given by the Cauchy-Schwarz inequality,
\begin{align*}
    \mathbb{E}\left[\left\|\mathbb{E}[Y(1)\mid \text{Gender}]-\mathbb{E}[Y(0)\mid \text{Gender}]\right\|_2^2\right].
\end{align*}
In contrast, when conditioning only on baseline GPA, the conditioning set does not fully capture the treatment effect heterogeneity, and therefore the optimal value of the COT is strictly smaller, leading to a looser PI set.

As shown in \Cref{fig:education}, when conditioning on gender (panels (b) and (c)), the estimators' distributions are centered at similar values, both substantially larger than when conditioning only on baseline GPA (panel (a)). This is consistent with the theoretical discussion above and highlights the importance of conditioning on informative covariates to tighten the partial identification sets. When conditioning on both gender and baseline GPA (panel (c), $d_Z=2$), the estimator exhibits slightly slower convergence, resulting in mildly larger upward bias and a less normal distribution compared to conditioning on gender alone (panel (b), $d_Z=1$). One possible future direction is to combine the proposed method with automatic selection of influential covariates.

\section{Discussion and Conclusion}\label{sec:discussion}

We propose a primal COT solver that leverages the smoothness of the marginal density to enable accurate estimation and statistical inference for causal partial identification sets. Our method accommodates multivariate outcomes, which are not supported by existing quantile-based approaches, and relieves the curse of dimensionality faced by alternative methods.
Methodologically, we establish stability results for both general and quadratic objective functions, for a procedure that incorporates a tailored alignment of the covariate marginal distribution. Empirically, our estimator converges faster than competitors and achieves desirable coverage for moderately large sample sizes.

We outline one future direction.
The direction is to extend the framework to more complex experimental designs where the probability of receiving treatment (propensity score) depends on the covariates. This extension would be useful for handling observational studies under the ignorability assumption, where treatment assignment is not randomized but can be modeled using a covariate-dependent propensity score.


\printbibliography

\appendix

\section{Optimal Transport}\label{appe:sec:OT}
\begin{definition}[Optimal transport]
    Given a measurable objective function $h$ and probability distributions $\PP, \QQ \in \calP(\RR^{d_X})$, the OT distance is defined as 
    \[
    W_h(\PP, \QQ) \Let \min_{\pi \in \Pi(\PP, \QQ)} \EE_{\pi}[h(X, X')],
    \]
    where 
    \begin{equation}
    \Pi(\PP, \QQ) \Let \left\{\pi \in \calP(\calX^2): \pi_{X} = \PP,~\pi_{X'} = \QQ \right\}.
    \end{equation}
    
    When $h(x,x') = \|x - x'\|_2^p$, we denote $W_p(\PP, \QQ) = W_h(\PP, \QQ)^{\frac{1}{p}}$, which is the so-called Wasserstein $p$-distance.
\end{definition}

The $2$-Wasserstein distance between Gaussian measures admits a closed-form expression.
\begin{proposition}[OT between Gaussian distributions \parencite{gelbrich1990}]\label{prop:Gelbrich}
    \begin{equation*}
    W_2^2\!\left(\mathcal N(m_0,\Sigma_0),\,\mathcal N(m_1,\Sigma_1)\right)
    =
    \|m_0-m_1\|_2^2
    +
    \operatorname{tr}\!\left(
    \Sigma_0+\Sigma_1
    -
    2(\Sigma_1^{1/2}\Sigma_0\Sigma_1^{1/2})^{1/2}
    \right).
    \end{equation*}
\end{proposition}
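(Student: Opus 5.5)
The plan is to reduce to a problem about second moments only. Fix any coupling $\pi \in \Pi(\mathcal N(m_0,\Sigma_0),\mathcal N(m_1,\Sigma_1))$ with $(X,X') \sim \pi$, and write $\widetilde X = X - m_0$ and $\widetilde X' = X' - m_1$. Expanding the square, the cross term $2(m_0-m_1)^\top \EE[\widetilde X - \widetilde X']$ vanishes because both centered variables have mean zero. This gives
\[
\EE_\pi\|X-X'\|_2^2 = \|m_0-m_1\|_2^2 + \operatorname{tr}(\Sigma_0) + \operatorname{tr}(\Sigma_1) - 2\operatorname{tr}(K),
\]
where $K := \EE[\widetilde X \widetilde X'^\top]$ is the cross-covariance. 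Computing $W_2^2$ is therefore equivalent to maximizing $\operatorname{tr}(K)$ over the cross-covariances that some coupling can realize. The proof then splits into an upper bound on $\operatorname{tr}(K)$ valid for every coupling, and a Gaussian coupling that attains it.

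\textbf{Upper bound.} The joint covariance $\begin{pmatrix}\Sigma_0 & K\\ K^\top & \Sigma_1\end{pmatrix}$ is positive semidefinite. First assume $\Sigma_0, \Sigma_1 \succ 0$. By the Schur complement, PSD-ness is equivalent to $K = \Sigma_0^{1/2} C \Sigma_1^{1/2}$ for some contraction $C$ with $\|C\|_{\mathrm{op}} \le 1$. Then
\[
\operatorname{tr}(K) = \operatorname{tr}\bigl(C\, \Sigma_1^{1/2}\Sigma_0^{1/2}\bigr) \le \bigl\|\Sigma_1^{1/2}\Sigma_0^{1/2}\bigr\|_{*},
\]
by von Neumann's trace inequality (duality between the operator and nuclear norms). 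Set $M = \Sigma_1^{1/2}\Sigma_0^{1/2}$. Then $MM^\top = \Sigma_1^{1/2}\Sigma_0\Sigma_1^{1/2}$, so the nuclear norm of $M$ equals $\operatorname{tr}\bigl((\Sigma_1^{1/2}\Sigma_0\Sigma_1^{1/2})^{1/2}\bigr)$. Substituting into the display above gives "$\ge$" in the claimed formula. Note that this bound holds for arbitrary (not necessarily Gaussian) couplings.

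\textbf{Attainment.} Define $T = \Sigma_0^{-1/2}(\Sigma_0^{1/2}\Sigma_1\Sigma_0^{1/2})^{1/2}\Sigma_0^{-1/2}$, which is symmetric PSD, and take $X' = m_1 + T(X-m_0)$.
\begin{itemize}
\item A direct check gives $T\Sigma_0 T = \Sigma_1$, so $X' \sim \mathcal N(m_1,\Sigma_1)$ and this is a valid coupling.
\item Its cross-covariance is $K = \Sigma_0 T$, so $\operatorname{tr}(K) = \operatorname{tr}\bigl((\Sigma_0^{1/2}\Sigma_1\Sigma_0^{1/2})^{1/2}\bigr)$.
\item This equals $\operatorname{tr}\bigl((\Sigma_1^{1/2}\Sigma_0\Sigma_1^{1/2})^{1/2}\bigr)$, because $M^\top M$ and $MM^\top$ have the same nonzero eigenvalues.
\end{itemize}
Hence equality holds in the upper bound.

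\textbf{Singular covariances.} I expect the main obstacle to be the case where $\Sigma_0$ or $\Sigma_1$ is singular, since both the contraction factorization and $T$ use inverses. I would handle this by replacing $\Sigma_w$ with $\Sigma_w + \varepsilon I$ and letting $\varepsilon \downarrow 0$. The right-hand side is continuous in $(\Sigma_0,\Sigma_1)$, because the matrix square root is continuous on PSD matrices. The left-hand side is also continuous: $W_2$ is a metric, and $W_2(\mathcal N(m,\Sigma+\varepsilon I), \mathcal N(m,\Sigma))^2 \le \varepsilon d$ via the coupling $X + \sqrt{\varepsilon}\,G$ with $G \sim \mathcal N(0,I)$ independent of $X$. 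Alternatively, one could repeat the argument with Moore--Penrose pseudo-inverses, but the approximation route is cleaner.
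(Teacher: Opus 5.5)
Your argument is correct and complete. The following steps all go through:
\begin{itemize}
\item the reduction to maximizing $\operatorname{tr}(K)$;
\item the factorization $K=\Sigma_0^{1/2}C\Sigma_1^{1/2}$ with $\|C\|_{\mathrm{op}}\le 1$;
\item the duality bound $\operatorname{tr}(CM)\le\|M\|_{*}$ with $\|M\|_{*}=\operatorname{tr}\bigl((\Sigma_1^{1/2}\Sigma_0\Sigma_1^{1/2})^{1/2}\bigr)$;
\item the checks $T\Sigma_0T=\Sigma_1$ and $\operatorname{tr}(\Sigma_0T)=\operatorname{tr}\bigl((\Sigma_0^{1/2}\Sigma_1\Sigma_0^{1/2})^{1/2}\bigr)$;
\item the $\varepsilon I$ regularization, via the triangle inequality for $W_2$ and continuity of the PSD square root.
\end{itemize}

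The paper gives no proof of its own. It imports the formula from Gelbrich (1990) as a black box and uses it only to compute ground-truth COT values in the Gaussian simulation designs, so there is no in-paper route to compare against.

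Compared with the citation, your derivation is self-contained and uses nothing beyond linear algebra. It also proves a slightly stronger lower bound. The first half uses only means and covariances, so it bounds the cost of every coupling, jointly Gaussian or not, of any two laws with these first two moments (after the same regularization in the degenerate case). This is exactly what is needed, because the infimum ranges over all couplings, not just jointly Gaussian ones.

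In the nondegenerate case there is an alternative optimality certificate, closer to the Brenier-map viewpoint the paper adopts for quadratic costs. Your $T$ is symmetric positive semidefinite, so $x\mapsto m_1+T(x-m_0)$ is the gradient of a convex quadratic, and the coupling it induces is optimal by Brenier's theorem (Knott--Smith). Your trace-duality route reaches the same conclusion without invoking that theory.
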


\section{Boundary-corrected Wavelet System}\label{sec:bc_wavelet}
Next, we describe the specific wavelet basis used in our proposal.
\begin{definition}[Boundary-corrected wavelet basis]
Let $j_0\ge 0$ be the minimal resolution level at which boundary correction is applied. The boundary-corrected wavelet basis on $L^2([0,1])$ is defined by
\begin{align*}
    \Phi^{\mathrm{bc}}_0 &= \{\phi^{\mathrm{bc}}_{j_0 k}, k \in \calK_0(j_0)\},\\
    \Psi^{\mathrm{bc}}_0 &= \{\psi^{\mathrm{bc}}_{j k}, k \in \calK_0(j), j \geq j_0\},
\end{align*}
where $\calK_0(j) := \{0,1,\dots,2^j-1\}$. Specifically, the wavelet basis is compactly supported orthonormal on $[0,1]$. The concrete definition of $\phi^{\mathrm{bc}}, \psi^{\mathrm{bc}}$ can be found in \cite[Section A.2.1]{manole2024plugin}.

For outcomes $y\in[0,1]^{d_Y}$ and covariates $z\in [0,1]^{d_Z}$, define
\[
x=\left((y_s)_{1\le s\le d_Y},\ (z_{s'})_{1\le s'\le d_Z}\right)
\in[0,1]^{d_Y+d_Z}.
\]
The tensor-product boundary-corrected wavelet functions on $[0,1]^{d_Y+d_Z}$ are defined by
\[
\left\{
    \begin{aligned}
    &\Phi^{\mathrm{bc}}_{j_0, \mathbf k}(x) = \prod_{r=1}^{d_Y+d_Z} \phi^{\mathrm{bc}}_{j_0 k_r}(x_r), \qquad \mathbf k \in \calK_0(j_0)^{d_Y+d_Z}\\
    &\Psi^{\mathrm{bc}}_{j,\mathbf k,\mathbf l}(x)
    =
    \prod_{r=1}^{d_Y+d_Z}
    \begin{cases}
    \phi^{\mathrm{bc}}_{j,k_r}(x_r), & l_r = 0,\\[6pt]
    \psi^{\mathrm{bc}}_{j,k_r}(x_r), & l_r = 1,
    \end{cases}
    \qquad \mathbf l \in\{0,1\}^{d_Y+d_Z} \backslash \{(0,...,0)\}, \mathbf k \in \calK_0(j)^{d_Y+d_Z}, j \geq j_0
    \end{aligned}
\right\},
\]
where $\mathbf k=(k_1,\ldots,k_{d_Y+d_Z})$ and
$\mathbf l=(l_1,\ldots,l_{d_Y+d_Z})$.
\end{definition}
\section{Additional Theoretical Results and Proofs}\label{sec:tech}
\textbf{Notation.} In the following sections, for simplicity, we will write $\widehat P_n$ (resp. $\widehat Q_n$, $\widehat R_{n}$) as $\widehat P$ (resp. $\widehat Q$, $\widehat R$). 

In this section, we present the technical proof of the core result (Proposition~\ref{prop:core_rate}). As a preparation, we introduce the following lemmas.
\begin{lemma}[{\cite[Theorem 4]{niles2022minimax}}]\label{prop:measure.and.density}
    Suppose that $\widehat \mu, \mu \in \calP_{\textup{ac}}([0,1]^d)$, and let $\hat f, f \in L^2([0,1]^d)$ be the density function of $\widehat \mu, \mu$, respectively. If there exists a constant $\gamma >0$ such that $\hat{f}(x)\vee f(x) \geq \gamma^{-1}\,\forall x\in[0,1]^d$, then 
    \begin{align*}\label{eq:Wass_B21}
        W_2(\widehat \mu, \mu) \leq C_{\textup{wb}} \gamma^{\frac{1}{2}}  \|\hat f - f\|_{\calB_{2,1}^{-1}([0,1]^d)},
    \end{align*}
    where $C_{\textup{wb}}>0$ is a universal constant.
\end{lemma}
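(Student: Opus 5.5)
The plan is to go through the Benamou--Brenier dynamic formulation, and to measure the length of the linear interpolation path rather than its energy. This is the argument behind Peyre's inequality $W_2(\mu,\nu)\le 2\|\mu-\nu\|_{\dot H^{-1}(\mu)}$, adapted to the hypothesis that only the pointwise maximum $\hat f\vee f$ is bounded below. Write $g:=\hat f-f$. Since both are probability densities, $\int_{[0,1]^d} g=0$. We will control $W_2$ first by the homogeneous Sobolev norm $\|g\|_{\dot H^{-1}}$, and then that norm by the Besov norm $\|g\|_{\calB^{-1}_{2,1}}$.

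\emph{Step 1 (velocity field).} Solve the Neumann problem $-\Delta u=g$ on $[0,1]^d$ with $\partial_n u=0$ on the boundary. This is solvable because $\int g=0$. Set $E:=\nabla u$, so that $-\operatorname{div}E=g$, $E\cdot n=0$ on $\partial[0,1]^d$, and $\|E\|_{L^2}=\|g\|_{\dot H^{-1}([0,1]^d)}$. Take the path $\rho_t=(1-t)f+t\hat f$, $t\in[0,1]$, and the velocity $v_t=E/\rho_t$. Then $\partial_t\rho_t+\operatorname{div}(\rho_t v_t)=g-g=0$, and the flux has no normal component, so $(\rho_t,v_t)$ is an admissible curve in the Wasserstein space inside $[0,1]^d$.

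\emph{Step 2 (length bound).} By the Benamou--Brenier characterization, the metric derivative of $t\mapsto\rho_t$ in $W_2$ is at most $\|v_t\|_{L^2(\rho_t)}$. Hence
\[
W_2(\widehat\mu,\mu)\le\int_0^1\Bigl(\int \frac{|E|^2}{\rho_t}\,\diff x\Bigr)^{1/2}\diff t .
\]
At each $x$, either $f(x)\ge\gamma^{-1}$ or $\hat f(x)\ge\gamma^{-1}$. Since both densities are nonnegative, this gives $\rho_t(x)\ge\min(t,1-t)\,\gamma^{-1}$. Therefore
\[
W_2(\widehat\mu,\mu)\le\gamma^{1/2}\|E\|_{L^2}\int_0^1\min(t,1-t)^{-1/2}\,\diff t=2\sqrt2\,\gamma^{1/2}\|g\|_{\dot H^{-1}}.
\]
Using the energy $\int_0^1\int |v_t|^2\rho_t$ instead of the length would give a logarithmically divergent integral $\int_0^1 \min(t,1-t)^{-1}\,\diff t$. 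The length formulation turns the endpoint singularity into the integrable $t^{-1/2}$, and this is exactly why the weak hypothesis on $\hat f\vee f$ suffices. To be rigorous I would first mollify or add $\varepsilon$ to the densities, so that $v_t$ is regular, and then pass to the limit using lower semicontinuity of $W_2$.

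\emph{Step 3 (Sobolev to Besov).} It remains to show $\|g\|_{\dot H^{-1}}\le C\|g\|_{\calB^{-1}_{2,1}}$ for mean-zero $g$. Use the duality $\|g\|_{\dot H^{-1}}=\sup\{\int g\varphi:\|\nabla\varphi\|_{L^2}\le1\}$, where the supremum may be restricted to $\int\varphi=0$ because $g$ has mean zero. Poincar\'e's inequality then gives $\|\varphi\|_{H^1}\lesssim1$. The embedding $H^1=\calB^1_{2,2}\hookrightarrow\calB^1_{2,\infty}$, combined with the duality $(\calB^{1}_{2,\infty})^*\supseteq\calB^{-1}_{2,1}$ recalled after Definition~\ref{defi:Besov.space}, yields $\int g\varphi\le\|g\|_{\calB^{-1}_{2,1}}\|\varphi\|_{\calB^1_{2,\infty}}\lesssim\|g\|_{\calB^{-1}_{2,1}}$. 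Combining the three steps gives the claim with $C_{\textup{wb}}=2\sqrt2\,C$.

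I expect the main obstacle to be Step 3 on the bounded cube. It requires that the boundary-corrected wavelet system of Appendix~\ref{sec:bc_wavelet} characterizes $H^1([0,1]^d)$ as $\calB^1_{2,2}$, with norm equivalence constants depending only on $d$ and the wavelet. This is a known result (Cohen--Daubechies--Vial; see also \cite{niles2022minimax}), and I would cite it rather than reprove it. The Neumann solvability and the regularization in Step 2 are routine by comparison.
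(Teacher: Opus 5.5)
Your argument is correct, but it takes a different route from the paper. The paper's proof is a one-line appeal to \cite[Theorem 4]{niles2022minimax}, specialized to $W_2$.

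You reprove the inequality from scratch in two steps. First, a Peyre-type dynamic bound along the linear interpolation $\rho_t=(1-t)f+t\hat f$, measured by length rather than energy, gives $W_2(\widehat\mu,\mu)\le 2\sqrt2\,\gamma^{1/2}\|\hat f-f\|_{\dot H^{-1}}$. Second, duality plus Poincar\'e plus a wavelet estimate converts $\dot H^{-1}$ into $\calB^{-1}_{2,1}$.

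What your route gains is that it visibly handles the hypothesis exactly as this lemma states it. The condition is the pointwise one, $\hat f(x)\vee f(x)\ge\gamma^{-1}$, and you use it through $\rho_t\ge\min(t,1-t)\gamma^{-1}$ and the integrable weight $\min(t,1-t)^{-1/2}$. This matters because the main-text version of the same cited result assumes $f\ge\gamma^{-1}$ alone, and the pointwise-max condition does not imply that. A bare citation therefore has to be checked against the exact hypothesis of the cited theorem. Your proof settles the point directly and gives an explicit constant. The citation is shorter and inherits the full generality of the cited theorem (all $W_p$ with $\calB^{-1}_{p,1}$).

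Two of your steps can be simplified.
\begin{itemize}
\item The mollification in Step 2 is unnecessary. The weak Neumann formulation $\int\nabla u\cdot\nabla\varphi=\int g\varphi$ for all $\varphi\in H^1$ already gives the continuity equation distributionally on $\RR^d$, with $\rho_t$ extended by zero. The Ambrosio--Gigli--Savar\'e characterization of absolutely continuous curves only needs $\int_0^1\|v_t\|_{L^2(\rho_t)}\,\diff t<\infty$, which you have already verified.
\item Step 3 does not need the full characterization $H^1=\calB^1_{2,2}$. Only the Jackson direction is used, namely $\sup_j 2^j\|(\theta_\xi)_{\xi\in\Psi_j}\|_{\ell_2}\lesssim\|\nabla\varphi\|_{L^2}$ for mean-zero $\varphi$. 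This follows from the locality of the boundary-corrected multiresolution and its reproduction of constants. Since the Poincar\'e constant of the cube is dimension-free, your constant depends on $d$ and the wavelet family only through this Jackson estimate.
\end{itemize}
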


\begin{lemma}[{\cite[Section 6.10, Proposition 7]{meyer1992wavelets}}]\label{lem:wavelet_oper}
    There is a universal constant $C_{\textup{emb}} > 0$, such that for any $f \in L^2([0,1]^d)$, 
    \begin{align*}
        \|f\|_{\calB_{2,1}^{-1}([0,1]^d)} \leq C_{\textup{emb}} \|f\|_{L_2([0,1]^d)}.
    \end{align*}
\end{lemma}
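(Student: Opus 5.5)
The plan is to work directly from the duality definition of the negative-smoothness Besov norm stated after Definition~\ref{defi:Besov.space}, together with orthonormality of the boundary-corrected wavelet system. With $p'=2$ and $q'=1$, the conjugate indices are $p=2$ and $q=\infty$, so $\calB^{-1}_{2,1}([0,1]^d)=(\calB^{1}_{2,\infty}([0,1]^d))^*$. Hence
\[
\|f\|_{\calB^{-1}_{2,1}([0,1]^d)}=\sup\Big\{\int_{[0,1]^d} fg\,\diff x:\ \|g\|_{\calB^{1}_{2,\infty}([0,1]^d)}\le 1\Big\}.
\]
By Cauchy--Schwarz, $\int fg\le \|f\|_{L^2}\|g\|_{L^2}$. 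It therefore suffices to prove the reverse embedding $\|g\|_{L^2}\le C\|g\|_{\calB^{1}_{2,\infty}}$ with $C$ depending on nothing.

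For that embedding, I would use that the system $(\Phi,\Psi_{j_0},\Psi_{j_0+1},\dots)$ is an orthonormal basis of $L^2([0,1]^d)$, which holds for the tensor-product boundary-corrected construction of \Cref{sec:bc_wavelet}. Parseval then gives
\[
\|g\|_{L^2}^2=\|(\theta_\zeta)_{\zeta\in\Phi}\|_{\ell_2}^2+\sum_{j\ge j_0}\|(\theta_\xi)_{\xi\in\Psi_j}\|_{\ell_2}^2.
\]
With $s=1$ and $p=2$, the Besov weight is $2^{j(1+d/2-d/2)}=2^{j}$. So for every $j$ we have $\|(\theta_\xi)_{\xi\in\Psi_j}\|_{\ell_2}\le 2^{-j}\|g\|_{\calB^1_{2,\infty}}$, and the scaling part is bounded by $\|g\|_{\calB^1_{2,\infty}}$ as well. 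Summing the geometric series gives
\[
\|g\|_{L^2}^2\le \|g\|_{\calB^1_{2,\infty}}^2\Big(1+\sum_{j\ge 0}4^{-j}\Big)=\tfrac{7}{3}\|g\|_{\calB^1_{2,\infty}}^2.
\]
Combining with the Cauchy--Schwarz step yields the lemma with $C_{\textup{emb}}=\sqrt{7/3}$, which is universal.

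An equivalent sequence-space route: identify $\|f\|_{\calB^{-1}_{2,1}}$ with $\|\theta_\Phi\|_{\ell_2}+\sum_j 2^{-j}\|\theta_{\Psi_j}\|_{\ell_2}$, bound each block by $\|f\|_{L^2}$ via Bessel's inequality, and sum $\sum_j 2^{-j}\le 2$.

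The main point needing care is exactly this identification of the duality norm with the wavelet-coefficient expression, or equivalently the validity of the pairing $\int fg=\sum\theta_\lambda(f)\theta_\lambda(g)$. I would settle it via Parseval for $f,g\in L^2$, which is the only case needed here. This avoids any appeal to general Besov duality theory beyond the definition already given.
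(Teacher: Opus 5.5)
Your proof is correct, but your main route differs from the paper's.

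\textbf{The paper's route.} The paper works directly with wavelet coefficients. It writes $f=f_{<j_0}+\sum_{j\ge j_0}f_j$ and takes $\|f\|_{\calB^{-1}_{2,1}}=\|f_{<j_0}\|_{L^2}+\sum_{j\ge j_0}2^{-j}\|f_j\|_{L^2}$, which is the coefficient formula with $s=-1$, $p=2$. It then uses Parseval on each level, Cauchy--Schwarz across levels, $\sum_j 2^{-j}\|f_j\|_{L^2}\le(\sum_j 4^{-j})^{1/2}(\sum_j\|f_j\|_{L^2}^2)^{1/2}$, and orthogonality of the blocks.

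That is essentially your ``sequence-space'' alternative. You use the cruder blockwise Bessel bound $\|f_j\|_{L^2}\le\|f\|_{L^2}$ with $\sum_j 2^{-j}\le 2$, giving constant $3$. This incidentally sidesteps a loose step in the paper: the factor $(\sum_{j\ge j_0}4^{-j})^{1/2}$ placed in front of $\|f_{<j_0}\|_{L^2}$ can be smaller than $1$. That slip is harmless, since $\|f_{<j_0}\|_{L^2}\le\|f\|_{L^2}$.

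\textbf{Your route.} Your primary argument starts from the duality definition the paper states for negative smoothness. It reduces the lemma to the positive-smoothness embedding $\calB^{1}_{2,\infty}\hookrightarrow L^2$, which needs only Parseval for $g\in L^2$ and a geometric series. This gives a self-contained proof that never has to identify a negative-smoothness norm with a coefficient expression.

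\textbf{The trade-off.} The quantity you bound is the dual norm. The norm used in the paper's proof, and in the Niles-Weed--Berthet Wasserstein bound it feeds into, is the coefficient expression. These are not literally equal. Strictly, $(\calB^1_{2,\infty})^*$ is larger than $\calB^{-1}_{2,1}$, since $(\ell_\infty)^*\supsetneq\ell_1$; you only use the norm of the functional $g\mapsto\int fg$ for $f\in L^2$, which is legitimate. Your own Parseval computation shows that for $f\in L^2$ the dual norm equals $\max\bigl(\|\theta_\Phi\|_{\ell_2},\sum_{j\ge j_0}2^{-j}\|\theta_{\Psi_j}\|_{\ell_2}\bigr)$. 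So the two norms differ by at most a factor $2$, and either version of the lemma follows from the other with a universal constant.
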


\begin{lemma}[{\cite[Lemma 29]{manole2024plugin}}]\label{lem:density_bound}
    Assume there exist $\gamma, s > 0$ such that a density function
    $q(x) \ge \gamma^{-1}$, $\forall x \in [0,1]^d$, and such that
    $q \in \mathcal{B}^{s}_{\infty,\infty}([0,1]^d)$. Let $\tilde q_n$ be the wavelet projection based on an i.i.d.~sample of size $n$
    drawn from the distribution with density $q$.
    Then, there exists $c_{\textup{b}} > 0$, depending on $[0,1]^d$ and
    $\|q\|_{\mathcal{B}^{s}_{\infty,\infty}([0,1]^d)}$, such that with probability at least
    $1 - c_{\textup{b}}/n^2$, 
    \[
        \sup_{x \in [0,1]^d} |q(x) - \tilde q_n(x)| \leq \gamma^{-1}/2.
    \]
\end{lemma}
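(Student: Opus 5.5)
The plan is to split the sup-norm error into a deterministic bias term and a stochastic fluctuation term, and to show that each is at most $\gamma^{-1}/4$. For $n$ above a threshold $n_0$ this holds with the required probability. For $n\le n_0$ the claim is trivial once $c_{\textup{b}}\ge n_0^2$, so we may take $c_{\textup{b}}$ large. Write $J=J_n$ and let $K_J(x,y)=\sum_{\zeta\in\Phi}\zeta(x)\zeta(y)+\sum_{j=j_0}^{J}\sum_{\xi\in\Psi_j}\xi(x)\xi(y)$ be the projection kernel of the boundary-corrected tensor-product system. Then $\tilde q_n(x)=\frac1n\sum_{i=1}^n K_J(x,X_i)$ and $\EE\tilde q_n=P_J q$, the $L^2$ projection onto the span up to level $J$. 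We decompose
\[
\sup_x|q-\tilde q_n|\le \|q-P_Jq\|_\infty+\sup_x|\tilde q_n(x)-\EE\tilde q_n(x)|.
\]

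\textbf{Bias.} From the definition of $\mathcal B^s_{\infty,\infty}$, the level-$j$ coefficients satisfy $\|(\theta_\xi)_{\xi\in\Psi_j}\|_{\ell_\infty}\le \|q\|_{\mathcal B^s_{\infty,\infty}}2^{-j(s+d/2)}$. Compact support gives bounded overlap: at each $x$ only $O(1)$ functions of $\Psi_j$ are nonzero, each of size $O(2^{jd/2})$. Summing the tail $j>J$ therefore gives
\[
\|q-P_Jq\|_\infty\le C\|q\|_{\mathcal B^s_{\infty,\infty}}\sum_{j>J}2^{-js}\le C'2^{-Js}.
\]
Since $J_n\to\infty$, this is below $\gamma^{-1}/4$ for $n\ge n_0$.

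\textbf{Fluctuation at a fixed point.} The same localization gives $|K_J(x,y)|\le C2^{Jd}$ and $\int K_J(x,y)^2\,dy\le C2^{Jd}$. Using $\|q\|_\infty\le C\|q\|_{\mathcal B^s_{\infty,\infty}}$, the summands $K_J(x,X_i)-\EE K_J(x,X_i)$ are bounded by $C2^{Jd}$ and have variance at most $C\|q\|_\infty 2^{Jd}$. Bernstein's inequality with $t=\gamma^{-1}/4$ then gives
\[
\PP\bigl(|\tilde q_n(x)-\EE\tilde q_n(x)|>t\bigr)\le 2\exp\bigl(-c\,n2^{-Jd}\bigr).
\]
Because $2^{Jd}\le n^{d/(2s+d)}$ for the choice of $J_n$, the exponent is at least of order $n^{2s/(2s+d)}$. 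The bound is therefore super-polynomially small.

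\textbf{Uniformity in $x$.} This is the step needing the most care. Choose the boundary-corrected Daubechies wavelets regular enough to be Lipschitz; this is available by taking the order large, consistent with the constraint on $j_0$. Then $x\mapsto K_J(x,y)$ is Lipschitz with constant $C2^{J(d+1)}\le Cn$, uniformly in $y$. Consequently both $\tilde q_n$ and $\EE\tilde q_n$ are $Cn$-Lipschitz.

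Take a grid of mesh $\delta=\gamma^{-1}/(16Cn)$ in $[0,1]^d$; it has $O(n^d)$ points. On the grid, apply the fixed-point bound at level $\gamma^{-1}/8$ and take a union bound. The failure probability is $O\bigl(n^d e^{-cn^{2s/(2s+d)}}\bigr)$, which is at most $c_{\textup{b}}/n^2$. Off the grid, the Lipschitz property adds at most $\gamma^{-1}/8$.

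Combining the three pieces, $\sup_x|q-\tilde q_n|\le\gamma^{-1}/2$ with probability at least $1-c_{\textup{b}}/n^2$. The constant $c_{\textup{b}}$ depends only on $d$, the wavelet system, $\gamma$ and $\|q\|_{\mathcal B^s_{\infty,\infty}}$.
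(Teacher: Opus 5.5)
Your proof is correct, but it takes a different route from the one behind the statement. The paper gives no proof of its own and simply cites Manole et al. The argument there is the standard Gin\'e--Nickl one. The bias is bounded by $2^{-J_n s}$ from the $\mathcal{B}^s_{\infty,\infty}$ coefficient decay, exactly as you do. The stochastic term is treated as the supremum of an empirical process over the kernel class $\{K_J(x,\cdot)\}_{x\in[0,1]^d}$: an entropy bound gives $\EE\|\tilde q_n-\EE\tilde q_n\|_\infty\lesssim\sqrt{2^{J_nd}J_n/n}$, and a Talagrand-type concentration inequality at a deviation level of order $\log n$ produces the failure probability $c_{\textup{b}}/n^2$.

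You replace that machinery with Bernstein's inequality at fixed points, a union bound over a polynomial-size grid, and Lipschitz continuity of $x\mapsto K_J(x,y)$. This is elementary and self-contained. Because only the constant accuracy $\gamma^{-1}/2$ is needed, it even gives a super-polynomially small failure probability, of order $n^{O(1)}\exp(-c\,n^{2s/(2s+d)})$, which is stronger than the claim. The empirical-process route, in turn, needs only an entropy bound on the kernel class rather than pointwise regularity of the wavelets. It also delivers the sharp sup-norm rate $\sqrt{2^{Jd}J/n}+2^{-Js}$, which is useful beyond this lemma.

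One small correction: $2^{J(d+1)}\le Cn$ holds only for $s\ge 1/2$. From $2^{J}\le n^{1/(2s+d)}$ you get $2^{J(d+1)}\le n^{(d+1)/(2s+d)}$, which exceeds $n$ when $s<1/2$. This is harmless. The Lipschitz constant is still at most $Cn^{(d+1)/d}$, so the grid has $O(n^{d+1})$ points, and your super-polynomial Bernstein bound absorbs the union bound.

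Similarly, if the fixed wavelet system has low order and is not Lipschitz, the same argument works with H\"older-$\alpha$ continuity for some $\alpha>0$, again with a polynomial grid.

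Your remark that $c_{\textup{b}}$ must depend on $\gamma$ is right. The statement as paraphrased omits this, but the dependence cannot be avoided, since the target accuracy $\gamma^{-1}/2$ shrinks as $\gamma$ grows.
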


\begin{proposition}[Key estimation bound, Proposition~\ref{prop:core_rate_main}]\label{prop:core_rate}
Under Assumption~\ref{a:boundedness_assp}-\ref{a:density_smooth},
    \[
        \EE\left[\int W^2_2(\widehat \QQ_Y^z, \QQ_Y^z) \, \widehat R(\diff z)\right] \leq C n^{-\frac{2s}{2s+d_Y+d_Z}},
    \]
    where $C = C_{\textup{wb}}^2 \gamma^2(16 \gamma^5  (C_{\textup{emb}} \gamma)^2 (C_{\textup{emb}} + 1)C^\dagger + c_{\textup{b}})$, and $C^\dagger ,c_{\textup{b}}$ depend on $d_Y, d_Z, q$.
\end{proposition}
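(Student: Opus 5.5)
We split according to the high-probability event $E_n$ of Lemma~\ref{lem:density_bound}. Let $E_n$ be the event on which $\sup_x|\tilde q_n(x)-q(x)|\le \gamma^{-1}/2$. On $E_n$ we have $\tilde q_n\ge \gamma^{-1}/2>0$, so the truncation in \eqref{eq:wavelet.estimator} is inactive and the normalizing integral is $1$. Indeed, the wavelet projection preserves total mass, because the scaling functions reproduce constants. Hence $\widehat q=\tilde q_n$ on $E_n$, and $\widehat q\ge (2\gamma)^{-1}$, $\widehat q\le \gamma+\gamma^{-1}/2\le 2\gamma$ (after enlarging $\gamma$ if needed). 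On $E_n^c$ we only use that $W_2^2\le d_Y$ on $[0,1]^{d_Y}$, which contributes at most $d_Y c_{\textup b}/n^2$. This is negligible, and is the source of the $c_{\textup b}$ term in the constant.

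On $E_n$ we work conditionally on $z$. The conditional densities $\widehat q(\cdot\mid z)=\widehat q(\cdot,z)/\widehat q_Z(z)$ and $q(\cdot\mid z)$ are both bounded below by a multiple of $\gamma^{-2}$ on $[0,1]^{d_Y}$, since numerators are at least $\gamma^{-1}/2$ and marginals are at most $2\gamma$. Lemma~\ref{prop:measure.and.density} on $[0,1]^{d_Y}$ followed by Lemma~\ref{lem:wavelet_oper} then gives
\[
W_2^2(\widehat\QQ_Y^z,\QQ_Y^z)\le C_{\textup{wb}}^2\gamma^2 C_{\textup{emb}}^2\,\|\widehat q(\cdot\mid z)-q(\cdot\mid z)\|_{L^2([0,1]^{d_Y})}^2 .
\]
Next we write
\[
\widehat q(y\mid z)-q(y\mid z)=\frac{\widehat q(y,z)-q(y,z)}{\widehat q_Z(z)}+q(y,z)\frac{q_Z(z)-\widehat q_Z(z)}{\widehat q_Z(z)q_Z(z)},
\]
and use the lower bounds on $\widehat q_Z$ and $q_Z$. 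Since $|\widehat q_Z-q_Z|^2\le\int|\widehat q-q|^2\,dy$ by Cauchy--Schwarz on the unit cube, we get
\[
\|\widehat q(\cdot\mid z)-q(\cdot\mid z)\|_{L^2}^2\lesssim \gamma^{c}\int|\widehat q(y,z)-q(y,z)|^2\,dy .
\]

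We then integrate against $\widehat R(\diff z)$. Its density $\widehat r=(\widehat p_Z+\widehat q_Z)/2$ is bounded by a multiple of $\gamma$ on the event where both wavelet estimators are uniformly close. The event for $\widehat p$ is handled identically, with its complement again costing $O(n^{-2})$. This yields
\[
\int W_2^2\,\diff\widehat R\lesssim \gamma^{c}\,\|\widehat q-q\|_{L^2([0,1]^{d_Y+d_Z})}^2 .
\]
Taking expectations, the standard bias--variance bound for the linear wavelet estimator with $J_n=\lfloor\log_2 n/(2s+d_Y+d_Z)\rfloor$ applies. The variance is of order $2^{J_n(d_Y+d_Z)}/n$ and the squared bias is of order $2^{-2J_ns}\|q\|^2_{\calB^s_{2,2}}$, where $q\in\calB^{s}_{2,2}$ follows from Assumption~\ref{a:density_smooth} and Proposition~\ref{prop:embedding}. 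Together these give $\EE\|\tilde q_n-q\|_{L^2}^2\le C^\dagger n^{-2s/(2s+d_Y+d_Z)}$, which proves the claim.

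The main obstacle is the conditional step: Lemma~\ref{prop:measure.and.density} must be applied uniformly in $z$ with conditional lower bounds, and the dependence of $\widehat R$ on $\widehat p$ must be controlled. Both are resolved by working on the intersection of the two good events, where all densities are pinned between $(2\gamma)^{-1}$ and $2\gamma$.
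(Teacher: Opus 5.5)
Your proposal is correct and follows essentially the same route as the paper. Both arguments use the good event from Lemma~\ref{lem:density_bound}, on which $\widehat q=\tilde q$ and all densities (including $\widehat r$) lie between $(2\gamma)^{-1}$ and $2\gamma$; then the conditional lower bound combined with Lemma~\ref{prop:measure.and.density} and Lemma~\ref{lem:wavelet_oper}; then the joint/marginal decomposition of the conditional density, with Cauchy--Schwarz for the marginal term; and finally the linear-wavelet bias--variance bound of Proposition~\ref{prop:L2-bound}. The differences are cosmetic: you pass from $\calB^{-1}_{2,1}$ to $L^2$ before the decomposition rather than after, and you spell out two points the paper leaves implicit (the projection preserves total mass, and the bad event contributes at most $d_Y$ times its probability since $W_2^2\le d_Y$), which changes the bookkeeping of the constant but not the rate.
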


\begin{proof}[Proof of Proposition~\ref{prop:core_rate}]
    Denote the (wavelet) density of $Q$ (resp. $P$) by $q$ (resp. $p$), the (wavelet) density of $\widehat Q$ (resp. $\widehat P$) by $\hat q$ (resp. $\hat p$), and the (wavelet) density of $\widehat R$ by $\hat r$. As for the wavelet projection, we replace $\hat p$ with $\tilde p$. Then, by \Cref{a:boundedness_assp}(i),
    \[
        q(y|z) = \frac{q(y,z)}{q_Z(z)} = \frac{q(y,z)}{\int q(y,z) \diff y} \geq \gamma^{-2}.
    \]    
    
    Then, applying \Cref{prop:measure.and.density}, we get
    \[
        \EE\left[\int W^2_2(\widehat \QQ_Y^z, \QQ_Y^z) \, \widehat R(\diff z)\right] \leq C_{\textup{wb}}^2 \gamma^2 \EE\left[\int_{\calZ} \|\hat q_Y^z(y) - q_Y^z(y)\|^2_{\calB_{2,1}^{-1}([0,1]^{d_Y})} \diff \widehat R(z)\right].
    \]

    To further investigate the conditional densities, we apply Lemma~\ref{lem:density_bound} under \Cref{a:density_smooth} and get, there exists $c_{\textup{b}} > 0$, depending on $d_Y, d_Z, q$, such that with probability at least $1 - c_{\textup{b}} / n^2$, 
    \begin{align*}
        \sup_{(y,z) \in [0,1]^{d_Y + d_Z}} |q(y, z) - \tilde q(y, z)| &\leq \gamma^{-1}/2,\\
        \sup_{(y,z) \in [0,1]^{d_Y + d_Z}} |p(y, z) - \tilde p(y, z)| &\leq \gamma^{-1}/2.
    \end{align*}
    As a result,
    \begin{align}
        \min_{(y,z) \in [0,1]^{d_Y + d_Z}} \tilde q(y,z) &\geq \gamma^{-1}/2, \label{eq:minmax_1}\\
        \tilde r(z) = \frac{1}{2}( \tilde p_Z(z) + \tilde q_Z(z)) & \in [\gamma^{-1}/2, 2\gamma] \ \  \forall z \in [0,1]^{d_Z}. \label{eq:minmax_2}
    \end{align}
    Then, on this high probability event, the wavelet projection $\tilde q$ is equal to the wavelet density estimator $\hat q$ (so does $\hat r$), thus Eq.~\eqref{eq:minmax_1}-\eqref{eq:minmax_2} also hold for $\hat q, \hat r$.
    
    Also, on this high probability event, we apply the following decomposition to bound the right-hand side of the above inequality.    
    \begin{align*}
    &\int_{[0,1]^{d_Z}} \|\hat q_Y^z(y) - q_Y^z(y)\|^2_{\calB_{2,1}^{-1}([0,1]^{d_Y})} \diff \widehat R(z)\\
    = & \int_{[0,1]^{d_Z}} \left\|\frac{\hat q(\cdot, z)}{\hat q_Z(z)} - \frac{q(\cdot, z)}{q_Z(z)}\right\|^2_{\calB_{2,1}^{-1}([0,1]^{d_Y})} \diff \widehat R(z) \quad (\text{by the definition of}~\hat q_Y^z(y))\\
    \leq & 2 \int_{[0,1]^{d_Z}} \frac{1}{\hat q_Z(z)^2} \left\|\hat q(\cdot, z) - q(\cdot, z)\right\|^2_{\calB_{2,1}^{-1}([0,1]^{d_Y})} + \|q(\cdot, z)\|^2_{\calB_{2,1}^{-1}([0,1]^{d_Y})} \left(\frac{1}{\hat q_Z(z)} - \frac{1}{q_Z(z)}\right)^2  \diff \widehat R(z)\\
    \leq & 2 (2\gamma)^3  \underbrace{\int_{[0,1]^{d_Z}} \left\|\hat q(\cdot, z) - q(\cdot, z)\right\|^2_{\calB_{2,1}^{-1}([0,1]^{d_Y})}  \diff z}_{\text{(Term A)}} +  2 (2\gamma)^3 \gamma^2 (C_{\textup{emb}} \gamma)^2\underbrace{  \int_{[0,1]^{d_Z}}  \left(\hat q_Z(z) - q_Z(z)\right)^2  \diff z}_{\text{(Term B)}},
    \end{align*}
    where the last inequality is due to \Cref{a:boundedness_assp} and Eq.~\eqref{eq:minmax_1}-\eqref{eq:minmax_2}. Specifically, by Lemma~\ref{lem:wavelet_oper}, 
    \[
        \|q(\cdot, z)\|^2_{\calB_{2,1}^{-1}([0,1]^{d_Y})} \leq C_{\textup{emb}}^2 \|q(\cdot, z)\|^2_{L^2([0,1]^{d_Y})} \leq C_{\textup{emb}}^2 \gamma^2 \qquad \forall z \in [0,1]^{d_Z}.
    \]

    We analyze the two terms separately.

    \medskip
    \noindent
    \textbf{(Term A).}
    By Lemma~\ref{lem:wavelet_oper}, 
    \begin{align*}
        &\int_{[0,1]^{d_Z}} \left\|\hat q(\cdot, z) - q(\cdot, z)\right\|^2_{\calB_{2,1}^{-1}([0,1]^{d_Y})} \diff z \\
        \leq& C_{\textup{emb}}\int_{[0,1]^{d_Z}} \|\hat q(\cdot, z) - q(\cdot, z)\|^2_{L_2([0,1]^{d_Y})} \diff z\\
        \leq & C_{\textup{emb}} \|\hat q - q\|^2_{L_2([0,1]^{d_Y + d_Z})}.
    \end{align*}

    
    \medskip
    \noindent
    \textbf{(Term B).}
    Let $\delta(y,z)=\hat q(y,z)-q(y,z)$. Then, by the Cauchy–Schwarz inequality,
    \begin{align*}
        \int_{[0,1]^{d_Z}}  \left(\hat q_Z(z) - q_Z(z)\right)^2  \diff z
        = & \int_{[0,1]^{d_Z}} \Big(\int_{[0,1]^{d_Y}} \delta(y,z) \diff y\Big)^2 dz\\
    \le & \int_{[0,1]^{d_Z}} \Big(\int_{[0,1]^{d_Y}} 1^2 \diff y\Big)\Big(\int_{[0,1]^{d_Y}} \delta(y,z)^2\diff y\Big)\diff z\\
    \le & \int_{[0,1]^{d_Y + d_Z}} (\hat q(y,z)-q(y,z))^2  \diff y \diff z,\\
    =& \|\hat q - q\|^2_{L_2([0,1]^{d_Y + d_Z})}.
    \end{align*}
    

    Note that Eq.~\eqref{eq:minmax_1}-\eqref{eq:minmax_2} implies $\hat q = \tilde q$, thus $\hat q$ is the wavelet (linear) projection of the form
    \[
        \hat q = \tilde q = \sum_{\zeta\in\Phi}\widehat \theta_\zeta\,\zeta
+
\sum_{j=j_0}^{J_n}\sum_{\xi\in\Psi_j}\widehat \theta_\xi\,\xi.
    \]

    To bound $\EE\left[\|\hat q - q\|^2_{L_2([0,1]^{d_Y + d_Z})}\right]$ , we introduce the following result
    \begin{proposition}\label{prop:L2-bound}
    Under \Cref{a:boundedness_assp}-\ref{a:density_smooth}, there exists a constant $C^\dagger$ depends on $q, \gamma, s$ such that
    \[
        \EE\left[\int_{[0,1]^{d_Y + d_Z}} (\tilde q(y,z)-q(y,z))^2  \diff y \diff z\right] \leq C^\dagger n^{-\frac{2s}{2s+d_Y+d_Z}}.
    \]
    \end{proposition}
    Also note that \parencite{hardle2012wavelets} has a similar bound for the case $d = 1$.

    Combining the above results implies the desired result. 
    \end{proof}

\subsection{Proof of Proposition~\ref{prop:stability_I}}
\begin{proof}[Proof of Proposition~\ref{prop:stability_I}]
    By \cite[Proposition 2]{lin2025estimation},
    \[
        \Cwass_h(\PP, \QQ) = \int W_h(\PP^z_Y, \QQ^z_Y) \diff  \PP_Z(z).
    \]
    Therefore, we have
    \[
        \begin{aligned}
            & \left|\Cwass_h(\PP, \QQ) - \Cwass_h(\widehat \PP, \widehat \QQ)\right|\\
            = & \left|\int W_{h}\left(\PP_{Y}^{z}, \QQ_{Y}^{z}\right) \diff \PP_{Z}(z) - \int W_{h}\left(\widehat \PP_Y^{z}, \widehat \QQ_Y^{z}\right) \diff \widehat \PP_Z(z)\right|\\ 
            \leq & \underbrace{\left|\int W_{h}\left(\PP_Y^{z}, \QQ_Y^{z}\right) \diff \PP_Z(z) - \int W_{h}\left(\PP_Y^{z}, \widehat \QQ_Y^{z}\right) \diff \widehat \PP_Z(z)\right|}_{\text{(Term A)}} +  \underbrace{\left|\int W_{h}\left(\PP_Y^{z}, \widehat \QQ_Y^{z}\right) \diff \widehat \PP_Z(z) - \int W_{h}\left(\widehat \PP_Y^{z}, \widehat \QQ_Y^{z}\right) \diff \widehat \PP_Z(z) \right|}_{\text{(Term B)}}.
        \end{aligned}
    \]
    
    For \textbf{(Term A)}: we highlight that we consider the optimal coupling $\pi_{Z, Z'} \in \calP(\calZ^2)$ that is attained in the optimal transport $W_{1}(\PP_{Z}, \widehat \PP_Z)$ to connect $\PP_Z$ and $\widehat \PP_Z$, then 
    \[
        \begin{aligned}
            (\text{Term A}) = &\left|\int W_{h}\left(\PP_Y^{z}, \QQ_Y^{z}\right) \diff \PP_Z(z) - \int W_{h}\left(\PP_Y^{z}, \widehat \QQ_Y^{z}\right) \diff \widehat \PP_Z(z)\right|\\
            = & \left|\int W_{h}\left(\PP_Y^{z}, \QQ_Y^{z}\right) -  W_{h}\left(\PP_Y^{z'}, \widehat \QQ_Y^{z'}\right) \diff \pi_{Z, Z'}(z, z')\right|\\
            \leq & \int \left|W_{h}\left(\PP_Y^{z}, \QQ_Y^{z}\right) -  W_{h}\left(\PP_Y^{z'}, \widehat \QQ_Y^{z'}\right) \right|\diff \pi_{Z, Z'}(z, z')\\
            \leq & L_h \int W_{1}\left(\PP_Y^{z}, \PP_Y^{z'}\right) + W_{1}\left(\QQ_Y^{z}, \widehat \QQ_Y^{z'}\right) \diff \pi_{Z, Z'}(z, z')\\
            \leq & L_h \int L_p \|z - z'\|_2  + W_{1}\left(\QQ_Y^{z}, \widehat \QQ_Y^{z'}\right) \diff \pi_{Z, Z'}(z, z')\\
            \leq & L_h \int 2 L_p \|z - z'\|_2  + W_{1}\left(\QQ_Y^{z'}, \widehat \QQ_Y^{z'}\right) \diff \pi_{Z, Z'}(z, z')\\
            \leq & 2 L_h L_p W_1(\PP_Z, \widehat \PP_Z) + L_h \int W_{1}\left(\QQ_Y^{z'}, \widehat \QQ_Y^{z'}\right) \diff \widehat \PP_{Z}(z').
        \end{aligned}
    \]
    Here, the second inequality is due to \cite[Lemma 5]{lin2025estimation}; the third inequality is due to the condition; the last inequality is due to the definition of $\pi$, which achieves the \textit{optimality} in $W_{1}(\PP_{Z}, \widehat \PP_Z)$.

    For \textbf{(Term B)}: we have
    \[
        \begin{aligned}
            (\text{Term B}) = & \left|\int W_{h}\left(\PP_Y^{z}, \widehat \QQ_Y^{z}\right) \diff \widehat \PP_Z(z) - \int W_{h}\left(\widehat \PP_Y^{z}, \widehat \QQ_Y^{z}\right) \diff \widehat \PP_Z(z) \right|\\
            \leq & \int \left| W_{h}\left(\PP_Y^{z}, \widehat \QQ_Y^{z}\right)  - W_{h}\left(\widehat \PP_Y^{z}, \widehat \QQ_Y^{z}\right) \right| \diff \widehat \PP_Z(z)\\
            \leq & L_h \int W_1(\PP_Y^{z}, \widehat \PP_Y^{z}) \diff \widehat \PP_Z(z),
        \end{aligned}
    \]
    where the last inequality is due to \cite[Lemma 5]{lin2025estimation}.
    
    Jointly, we get
    \[
        |\Cwass_h(\widehat \PP, \widehat \QQ) - \Cwass_h(\PP, \QQ)| \leq 2L_h L_p W_1(\PP_Z, \widehat \PP_Z) + L_h \int W_1(\PP_Y^z, \widehat \PP_Y^z) + W_1(\QQ_Y^z, \widehat \QQ_Y^z) \, \widehat \PP_Z(\diff z).
    \]
    
\end{proof}

\subsection{Proof of Theorem~\ref{thm:main_thm1}}
\begin{proof}[Proof of Theorem~\ref{thm:main_thm1}]
    We first verify the conditions of Proposition~\ref{prop:stability_I} under \Cref{a:boundedness_assp} to \ref{a:Lip_obj}.
    \begin{enumerate}[label=(\roman*)]
        \item Lipschitz continuity of $h$ is assumed by \Cref{a:Lip_obj}.
        \item Under \Cref{a:boundedness_assp}-\ref{a:density_smooth}, by duality, we have
        \begin{align*}
            W_1(Q_Y^z, Q_Y^{z'}) &\leq \int_{[0,1]^{d_Y}} |q(y | z) - q(y | z')| \diff y \\
            &\leq \gamma \int_{[0,1]^{d_Y}} |q(y, z) - q(y, z')| \diff y \\
            &\leq \gamma \max_{(y,z) \in [0,1]^{d_Y + d_Z}}\|\partial_z q(y,z)\|_2 \|z - z'\|_2. 
        \end{align*}
        Therefore, we set $L_p = \gamma \max_{(y,z) \in [0,1]^{d_Y + d_Z}}\|\partial_z q(y,z)\|_2 \vee \|\partial_z p(y,z)\|_2$.
            
    \end{enumerate}

    Finally, apply Proposition~\ref{prop:stability_I}, we have
    \[
        \EE[|V_{\cc} - \widehat V_{\cc, n}|] \leq 2L_h L_p \EE[W_1(\PP_Z, \widehat R_{n})] + L_h \EE\left[\int W_1(\PP_Y^z, \widehat \PP_{n,Y}^z) + W_1(\QQ_Y^z, \widehat \QQ_{n,Y}^z) \, \widehat R_{n}(\diff z)\right]. 
    \]

    For the first term, applying Jensen's inequality, we get
    \begin{align*}
        &\EE[W_1(\PP_Z, \widehat R_{n})]\\
        \leq & \frac{1}{2}(\EE[W_1(\PP_Z, \widehat P_{n, Z})] + \EE[W_1(\PP_Z, \widehat Q_{n, Z})])\\
        \leq & \frac{1}{2} \left((\EE[W_2^2(\PP_Z, \widehat P_{n, Z})])^{\frac{1}{2}} + (\EE[W_2^2(\PP_Z, \widehat Q_{n, Z})])^{\frac{1}{2}}\right).
    \end{align*}
        
    Apply a similar reasoning as bounding the (Term B) in the proof of Proposition~\ref{prop:core_rate}: 
    Under \Cref{a:boundedness_assp}, applying \Cref{prop:measure.and.density} - \ref{lem:wavelet_oper}, we get
    \begin{align*}
        \EE[W_2^2(\PP_Z, \widehat P_{n, Z})] 
        \leq C_{\textup{wb}}^2 C_{\textup{emb}}^2 \gamma \EE[\|p - \hat p\|_2^2]. 
    \end{align*}
    Then, applying \Cref{prop:L2-bound}, we get
    \[
        \EE[W_2^2(\PP_Z, \widehat P_{n, Z})] \lesssim n^{-\frac{2s}{2s+ d_Y + d_Z}}.
    \]

    For the second term, applying Jensen's inequality and Proposition~\ref{prop:core_rate}, we get
    \[
        \EE\left[\int W_1(\QQ_Y^z, \widehat \QQ_{n,Y}^z) \, \widehat R_{n}(\diff z)\right] \leq \left(\EE\left[\int W_2^2(\QQ_Y^z, \widehat \QQ_{n,Y}^z) \, \widehat R_{n}(\diff z)\right]\right)^{\frac{1}{2}} \lesssim n^{-\frac{s}{2s+d_Y + d_Z}}.
    \]

    As a result, we get
    \[
        \EE[|V_{\cc} - \widehat V_{\cc, n}|] \leq C L_h n^{-\frac{s}{2s+d_Y + d_Z}},      
    \]
    where $C$ is a constant that depends on $P, Q, \gamma, s, d_Y, d_Z$.
    
\end{proof}

\subsection{Proof of \Cref{prop:stability.COT}}
The stability bound under quadratic objective stems from the case for unconditional Wasserstein-$2$ distance.
\begin{proposition}[Proposition 12 in \parencite{manole2024plugin}]\label{prop:stability.OT}
Let $\PP, \QQ \in \mathcal{P}_{\mathrm{ac}}([0,1]^d)$. Assume that the Brenier potential $\varphi_0$ between $\PP, \QQ$ is a convex function such that
$\varphi_0 \in \calC^2([0,1]^d)$ and $\varphi_0$ is $\lambda$-strongly convex, i.e., there exists $\lambda > 0$ satisfying
\[
\frac{1}{\lambda} I_d \preceq \nabla^2 \varphi_0(x) \preceq \lambda I_d
\quad \text{for all } x \in [0,1]^d.
\]
For any $\widehat \PP, \widehat{\QQ} \in \mathcal{P}([0,1]^d)$,
\begin{equation}
0 \le W_2^2(\widehat \PP,\widehat{\QQ})
-
W_2^2(\PP,\QQ)
-
\int \phi_0 \, d(\widehat{\PP}-\PP)
-
\int \psi_0 \, d(\widehat{\QQ}-\QQ)
\;\le\;
\lambda\,(W_2(\widehat{\PP},\PP) + W_2(\widehat{\QQ},\QQ))^2,
\end{equation}
where $\phi_0(x) = \|x\|^2 - \varphi_0(x)$ and $\psi_0(x) = \|x\|^2 - \varphi_0^*(x)$, where $\varphi_0^*$ is the convex conjugate of $\varphi_0$.
\end{proposition}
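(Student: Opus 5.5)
The plan is the standard duality-plus-gluing argument. Throughout I fix the normalization of the cost so that $(\phi_0,\psi_0)$ is a dual-feasible pair. By Fenchel--Young, $\varphi_0(x)+\varphi_0^*(y)\ge x^\top y$, with equality exactly when $y=\nabla\varphi_0(x)$. This gives $\phi_0(x)+\psi_0(y)\le c(x,y)$ for the quadratic cost $c$, with equality on the graph of $T:=\nabla\varphi_0$. By Brenier's theorem, $T$ pushes $\PP$ to $\QQ$, so
\[
W_2^2(\PP,\QQ)=\int\phi_0\,d\PP+\int\psi_0\,d\QQ .
\]

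\emph{Lower bound.} The Kantorovich dual of $W_2^2(\widehat\PP,\widehat\QQ)$ is a supremum over feasible pairs. Since $(\phi_0,\psi_0)$ is feasible, $W_2^2(\widehat\PP,\widehat\QQ)\ge \int\phi_0\,d\widehat\PP+\int\psi_0\,d\widehat\QQ$. Subtracting the identity above yields the left inequality $0\le\cdots$ directly.

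\emph{Upper bound by gluing.} I build a coupling of $(\widehat\PP,\widehat\QQ)$ in three steps.
\begin{enumerate}
\item Take an optimal coupling $(\widehat X,X)$ for $W_2(\widehat\PP,\PP)$.
\item Set $Y=T(X)\sim\QQ$.
\item By the gluing lemma, attach $\widehat Y$ so that $(Y,\widehat Y)$ is an optimal coupling for $W_2(\QQ,\widehat\QQ)$.
\end{enumerate}
Then $W_2^2(\widehat\PP,\widehat\QQ)\le \EE\, c(\widehat X,\widehat Y)$. Subtracting $\int\phi_0\,d\widehat\PP+\int\psi_0\,d\widehat\QQ$ and using the identity above, the remainder is at most a constant multiple of
\[
\EE\bigl[\varphi_0(\widehat X)+\varphi_0^*(\widehat Y)-\widehat X^\top\widehat Y\bigr].
\]
Now expand around $(X,Y)$ with $Y=\nabla\varphi_0(X)$. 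The upper Hessian bound gives
\[
\varphi_0(\widehat X)\le\varphi_0(X)+Y^\top(\widehat X-X)+\tfrac{\lambda}{2}\|\widehat X-X\|^2 .
\]
The lower Hessian bound $\nabla^2\varphi_0\succeq\lambda^{-1}I$ makes $\varphi_0^*$ have Hessian $\preceq\lambda I$ with $\nabla\varphi_0^*(Y)=X$, so
\[
\varphi_0^*(\widehat Y)\le\varphi_0^*(Y)+X^\top(\widehat Y-Y)+\tfrac{\lambda}{2}\|\widehat Y-Y\|^2 .
\]
Adding these and using $\varphi_0(X)+\varphi_0^*(Y)=X^\top Y$, the linear terms collapse. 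What remains is
\[
-(\widehat X-X)^\top(\widehat Y-Y)+\tfrac{\lambda}{2}\bigl(\|\widehat X-X\|^2+\|\widehat Y-Y\|^2\bigr).
\]
Taking expectations and applying Cauchy--Schwarz with $a=W_2(\widehat\PP,\PP)$ and $b=W_2(\widehat\QQ,\QQ)$ bounds this by $ab+\tfrac{\lambda}{2}(a^2+b^2)\le\lambda(a+b)^2$. The last step uses $\lambda\ge1$, which is forced by $\lambda^{-1}I\preceq\lambda I$. Tracking the cost normalization gives the stated constant.

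\emph{Main obstacle: the domain of $\varphi_0^*$.} The Taylor bound for $\varphi_0$ is harmless because $[0,1]^d$ is convex. The bound for $\varphi_0^*$ requires the segment $[Y,\widehat Y]$ to stay where $\nabla^2\varphi_0^*\preceq\lambda I$. However, $\varphi_0^*$ is a supremum over $[0,1]^d$ only, and $\nabla\varphi_0([0,1]^d)$ need not be convex. I would first extend $\varphi_0$ to all of $\RR^d$ as a convex function with $\lambda^{-1}I\preceq\nabla^2\le\lambda I$; for instance, add $\frac{1}{2\lambda}\|x\|^2$ to a smooth convex extension and control its Hessian, possibly enlarging $\lambda$ by a constant. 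I would then use the conjugate of the extension, which is globally $\lambda$-smooth. For $\widehat Y\in[0,1]^d$, restricting the supremum to $[0,1]^d$ only decreases $\varphi_0^*$, so the inequality goes in the right direction. Also, the restricted and extended conjugates agree on $\mathrm{supp}\,\QQ$, so the earlier identities are unaffected.
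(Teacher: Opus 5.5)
The paper gives no proof of this statement; it cites it from Manole et al.\ (their Proposition~12). Your duality-plus-gluing argument is therefore a self-contained derivation, and its core is sound. Weak duality gives the lower bound. For the upper bound, the coupling glued through $Y=\nabla\varphi_0(X)$ and the two one-sided Taylor bounds leave the remainder $-(\widehat X-X)^\top(\widehat Y-Y)+\frac{\lambda}{2}(\|\widehat X-X\|^2+\|\widehat Y-Y\|^2)$. Its expectation is at most $ab+\frac{\lambda}{2}(a^2+b^2)$, where $a=W_2(\widehat\PP,\PP)$ and $b=W_2(\widehat\QQ,\QQ)$. Two points still need repair.

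\textbf{Normalization.} You cannot fix this by renormalizing the cost; the potentials must be rescaled. With $W_2^2$ defined through $\|x-y\|^2$, as in this paper, the Kantorovich pair is $(\|x\|^2-2\varphi_0,\ \|y\|^2-2\varphi_0^*)$. For the cost $\frac12\|x-y\|^2$ it is $(\frac12\|x\|^2-\varphi_0,\ \frac12\|y\|^2-\varphi_0^*)$. The pair written in the statement is neither, and with it the displayed inequality is false. Take $d=1$, $\PP=\QQ=\widehat\QQ=\mathrm{Unif}[0,1]$, so $\varphi_0=x^2/2$ and $\lambda=1$, and let $\widehat\PP=\mathrm{Unif}[\epsilon,1]$. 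The middle expression then equals $\epsilon^2/3-(\epsilon+\epsilon^2)/6<0$. So the statement carries a factor-$2$ slip, and the same slip reappears where the paper applies it conditionally.

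With $\phi_0=\|x\|^2-2\varphi_0$ and $\psi_0=\|y\|^2-2\varphi_0^*$, the quantity to bound is at most $\EE\|\widehat X-\widehat Y\|^2-\int\phi_0\,d\widehat\PP-\int\psi_0\,d\widehat\QQ=2\,\EE[\varphi_0(\widehat X)+\varphi_0^*(\widehat Y)-\widehat X^\top\widehat Y]$. This is at most $2ab+\lambda(a^2+b^2)\le\lambda(a+b)^2$, using $\lambda\ge1$. Write this out explicitly rather than saying that tracking the normalization gives the constant.

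\textbf{The conjugate.} Your remedy for the domain of $\varphi_0^*$ would lose the stated constant. If the extension forces $\tilde\lambda>\lambda$ in the Taylor bound for $\varphi_0^*$, you only get $2ab+\lambda a^2+\tilde\lambda b^2$. That does not imply $\lambda(a+b)^2$ when $b\gg a$.

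No extension is needed. Let $g=\varphi_0$ on $[0,1]^d$ and $g=+\infty$ elsewhere; the conjugate over $[0,1]^d$ is exactly $g^*$. Since $\nabla^2\varphi_0\succeq\lambda^{-1}I$ on the convex set $[0,1]^d$, $g$ is $\lambda^{-1}$-strongly convex on $\RR^d$. Hence $g^*$ is differentiable on all of $\RR^d$ with $\lambda$-Lipschitz gradient, and $\nabla g^*(y)$ is the unique maximizer of $x\mapsto x^\top y-\varphi_0(x)$ over $[0,1]^d$. For $Y=\nabla\varphi_0(X)$ with $X\in[0,1]^d$, that maximizer is $X$, because $\varphi_0(x)\ge\varphi_0(X)+\nabla\varphi_0(X)^\top(x-X)$ on $[0,1]^d$. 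Therefore $\varphi_0^*(\widehat Y)\le\varphi_0^*(Y)+X^\top(\widehat Y-Y)+\frac{\lambda}{2}\|\widehat Y-Y\|^2$ holds for every $\widehat Y$, with exactly $\lambda$. Only the lower Hessian bound is used in this step.

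If you prefer an extension anyway, use $\tilde\varphi(x)=\max_{x'\in[0,1]^d}\{\varphi_0(x')+\nabla\varphi_0(x')^\top(x-x')+\frac{1}{2\lambda}\|x-x'\|^2\}$. It is $\lambda^{-1}$-strongly convex on $\RR^d$, agrees with $\varphi_0$ on $[0,1]^d$, and loses nothing in the constant.
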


\begin{remark}
We compare the conditional and the unconditional stability result. The upper bound in \Cref{prop:stability.COT} $
\int W_2^2(\widehat \PP_Y^{z}, \PP_Y^z)\,\widehat R(\mathrm dz)
+
\int W_2^2(\widehat \QQ_Y^{z}, \QQ_Y^z)\,\widehat R(\mathrm dz)$ 
is no smaller than 
$W_2^2\!\big(\widehat{\PP}_Y^z\times \widehat R,\;
\PP_Y^z\times \widehat R\big)
+
W_2^2\!\big(\widehat{\QQ}_Y^z\times \widehat R,\;
\QQ_Y^z\times \widehat R\big)$ by \Cref{lemm:CW2.V.S.W2}, where the latter is similar to the upper bound
$W_2^2(\widehat{\PP},\PP) + W_2^2(\widehat{\QQ},\QQ)$ in \Cref{prop:stability.OT}.
A larger upper bound in \Cref{prop:stability.COT} is consistent with the fact that COT is intrinsically more challenging than the unconditional counterpart.
\end{remark}

\begin{lemma}[Conditional Wasserstein distance V.S. Wasserstein distance]\label{lemm:CW2.V.S.W2}
Let $\PP$ and $\QQ$ be probability measures on $\mathcal{Y}\times\mathcal{Z}$
with the same marginal distribution $\PP_Z = \QQ_Z$ on $\mathcal{Z}$. For $z\in\mathcal{Z}$,
let $\PP_Y^z$ and $\QQ_Y^z$ denote the corresponding
conditional distributions on $\mathcal{Y}$. 
Consider the quadratic cost on
$\|y-y'\|^2$. Then
\[
\mathbb{E}_{Z\sim \PP_Z}\!\left[
W_2^2\!\big(\PP_Y^ Z,\,\QQ_Y^Z\big)
\right]
\ \ge\
W_2^2(\PP,\QQ).
\]
\end{lemma}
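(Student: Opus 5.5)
The plan is to glue the fiberwise optimal plans together into a single coupling of $\PP$ and $\QQ$ on $\mathcal{Y}\times\mathcal{Z}$ that never moves the $Z$-coordinate. Its cost is then exactly the left-hand side, and $W_2^2(\PP,\QQ)$, being a minimum over all couplings, can only be smaller.

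\textbf{Step 1 (fiberwise plans).} For each $z\in\mathcal{Z}$, let $\pi^z\in\Pi(\PP_Y^z,\QQ_Y^z)$ be an optimal plan for the quadratic cost, so that
\[
\int \|y-y'\|^2\,\pi^z(\diff y,\diff y') = W_2^2(\PP_Y^z,\QQ_Y^z).
\]
Optimal plans exist because $\mathcal{Y}$ is compact (here $[0,1]^{d_Y}$) and the cost is continuous.

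\textbf{Step 2 (measurable selection).} We need $z\mapsto\pi^z$ to be a measurable kernel. The map $z\mapsto(\PP_Y^z,\QQ_Y^z)$ is measurable, and the set-valued map sending a pair of marginals to its optimal plans has closed graph. A measurable selection theorem therefore yields a measurable choice; see Villani, \emph{Optimal Transport}, Corollary 5.22. In the smooth setting of this paper one can instead take the Brenier map $\nabla\varphi(\cdot,z)$, which is jointly measurable, and set $\pi^z=(\mathrm{id},\nabla\varphi(\cdot,z))_\#\PP_Y^z$. I expect this measurability step to be the only technical point.

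\textbf{Step 3 (glued coupling).} Define $\gamma$ on $(\mathcal{Y}\times\mathcal{Z})^2$ by
\[
\int g\,\diff\gamma=\int_{\mathcal{Z}}\int g\big((y,z),(y',z)\big)\,\pi^z(\diff y,\diff y')\,\PP_Z(\diff z).
\]
Its first marginal is $\PP_Z\otimes\PP_Y^{Z}=\PP$. Its second marginal is $\PP_Z\otimes\QQ_Y^{Z}$, which equals $\QQ$ because $\PP_Z=\QQ_Z$. Hence $\gamma\in\Pi(\PP,\QQ)$.

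\textbf{Step 4 (comparison).} Under $\gamma$ the $Z$-coordinates coincide, so the cost $\|(y,z)-(y',z')\|^2$ reduces to $\|y-y'\|^2$. Therefore
\[
W_2^2(\PP,\QQ)\le\int\|(y,z)-(y',z')\|^2\,\diff\gamma=\int W_2^2(\PP_Y^z,\QQ_Y^z)\,\PP_Z(\diff z),
\]
which is exactly the claim.
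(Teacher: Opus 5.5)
Your proof is correct and follows the same route as the paper: glue fiberwise optimal plans $\pi^z$ along the diagonal in $z$ (the paper's coupling $\PP_Z(\diff z)\,\pi_z^*(\diff y,\diff y')\,\delta_z(\diff z')$), check the marginals using $\PP_Z=\QQ_Z$, and compare its cost with the infimum defining $W_2^2(\PP,\QQ)$. Your Step 2 on measurable selection of $z\mapsto\pi^z$ supplies a detail the paper leaves implicit. Because your coupling never moves $z$, the bound holds both for the cost $\|y-y'\|^2$ used in the statement and for the full quadratic cost on $\mathcal{Y}\times\mathcal{Z}$.
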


To obtain the strong convexity of the conditional potential functions in our setting, we shall leverage the following result.
\begin{lemma}[{\cite[Corollary 3.2]{gigli2011holder}}]\label{lem:convex_lam}
    Assume $P,Q \in \calP([0,1]^d)$ with densities $p,q$. Assume the Brenier's potential $\varphi_0$ between $P,Q$ satisfies $\varphi_0 \in C^2([0,1]^d)$ and $\gamma^{-1} \le p(x),q(x) \le \gamma\, \forall x\in [0,1]^d$ for a constant $\gamma > 0$. 
    Then there exists a constant $\lambda > 0$, depending only on $\gamma$, such that $\varphi_0$ is $\lambda$-strongly convex.
\end{lemma}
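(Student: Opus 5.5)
The plan is to read strong convexity off the Monge--Amp\`ere equation that $\varphi_0$ satisfies. Here $\varphi_0\in C^2([0,1]^d)$ and $\nabla\varphi_0$ pushes $P$ forward to $Q$. The change-of-variables formula then holds classically:
\begin{equation*}
\det \nabla^2\varphi_0(x) \;=\; \frac{p(x)}{q(\nabla\varphi_0(x))}, \qquad x\in[0,1]^d .
\end{equation*}
Combined with $\gamma^{-1}\le p,q\le\gamma$, this gives the two-sided determinant bound
\begin{equation*}
\gamma^{-2}\le \det\nabla^2\varphi_0(x)\le\gamma^2 .
\end{equation*}
Since $\varphi_0$ is convex, $\nabla^2\varphi_0(x)\succeq 0$. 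The lower bound on the determinant then shows that every eigenvalue is strictly positive. It remains to turn the determinant control into two-sided eigenvalue control.

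\textbf{Step 1 (upper Hessian bound).} The key intermediate claim is a uniform estimate $\|\nabla^2\varphi_0\|_{\infty}\le\Lambda$, with $\Lambda$ depending only on $\gamma$ (and the fixed dimension $d$). Both the source and target domains are the cube $[0,1]^d$, which is convex, so I would invoke Caffarelli-type global regularity for the second boundary value problem of Monge--Amp\`ere on convex domains.

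The first attempt is to localize at each $x$ using sections $S_h(x)=\{y:\varphi_0(y)<\varphi_0(x)+\nabla\varphi_0(x)\cdot(y-x)+h\}$. The determinant bounds make these sections comparable, up to constants depending only on $\gamma,d$, to ellipsoids of volume $\asymp h^{d/2}$. Qualitative $C^2$ regularity ensures the sections shrink to the ellipsoid $\{\langle\nabla^2\varphi_0(x)(y-x),y-x\rangle<2h\}$. The engulfing and doubling properties of sections, whose constants depend only on $\gamma,d$, should then bound the eccentricity of these ellipsoids, giving the Hessian bound. This is the step of \cite[Corollary 3.2]{gigli2011holder} that I would lean on.

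\textbf{Step 2 (lower bound).} Let $0<\lambda_1\le\dots\le\lambda_d$ be the eigenvalues of $\nabla^2\varphi_0(x)$. Using Step 1,
\begin{equation*}
\lambda_1 \;=\; \frac{\det\nabla^2\varphi_0(x)}{\lambda_2\cdots\lambda_d} \;\ge\; \gamma^{-2}\Lambda^{-(d-1)} .
\end{equation*}
Setting $\lambda:=\max\{\Lambda,\ \gamma^{2}\Lambda^{d-1}\}$ yields $\lambda^{-1}I_d\preceq\nabla^2\varphi_0\preceq\lambda I_d$ on $[0,1]^d$. This $\lambda$ depends only on $\gamma$, with $d$ treated as fixed.

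\textbf{Main obstacle.} The main obstacle is Step 1: obtaining the eccentricity control with constants independent of $\|\varphi_0\|_{C^2}$. With densities that are merely bounded, the classical theory only gives $C^{1,\alpha}$ estimates, so the argument must use the assumed $C^2$ smoothness purely qualitatively. It must also exploit the convexity and symmetry of the cube, including near the boundary, where one uses that $\nabla\varphi_0$ maps faces to faces.
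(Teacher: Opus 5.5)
Your Step~2 is fine, but Step~1 is a genuine gap that more work on sections cannot close. A bound $\sup_x\|\nabla^2\varphi_0(x)\|\le\Lambda(\gamma,d)$ is not available, and for $d\ge 2$ it is false. (For $d=1$ your determinant bound already gives it.)

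The section machinery you invoke (comparability with ellipsoids of volume $\asymp h^{d/2}$, engulfing, doubling) gives strict convexity and $C^{1,\alpha}$ (and $W^{2,1+\varepsilon}$) regularity when $\det\nabla^2\varphi_0$ is only pinched between two constants. It does \emph{not} bound the eccentricity of the sections. Eccentricity is controlled only when the densities have small oscillation. That is why $W^{2,p}$, $C^{1,1}$ and $C^{2,\alpha}$ estimates for Monge--Amp\`ere depend on the modulus of continuity of the right-hand side. Wang's counterexamples (solutions with $\det\nabla^2 u$ between two positive constants but unbounded Hessian) show that this dependence is unavoidable.

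Using $\varphi_0\in C^2$ only qualitatively does not help. The sections shrink to the ellipsoid $\{\langle\nabla^2\varphi_0(x)(y-x),y-x\rangle<2h\}$ at a rate governed by $\varphi_0$ itself, so you get no uniform information. Conversely, if a $\gamma$-only bound held for every $C^2$ potential, stability of optimal transport would carry it to limits of smooth approximations of such examples, forcing their gradients to be Lipschitz.

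The same obstruction applies to the statement as literally written. Once $\det\nabla^2\varphi_0\ge\gamma^{-2}$, the map $\nabla\varphi_0$ is a $C^1$ diffeomorphism of $[0,1]^d$ onto itself. The potential of the reversed problem (from $Q$ to $P$) is $\varphi_0^*$, and $\nabla^2\varphi_0^*(\nabla\varphi_0(x))=\nabla^2\varphi_0(x)^{-1}$. Hence a lower bound $\nabla^2\varphi_0\succeq\lambda(\gamma)^{-1}I_d$ valid for all admissible pairs is equivalent to an upper bound $\nabla^2\varphi_0\preceq\lambda(\gamma)I_d$ for all pairs, which fails for the same reason.

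The paper supplies no argument beyond the citation. When it actually uses the lemma, in the proof of Proposition~\ref{prop:stability.COT}, it lets $\lambda$ depend on $\gamma$ \emph{and} on the uniform $\calC^2$ bound $C_2$. That weaker version is exactly what your proposal proves once Step~1 is replaced by the trivial bound $\Lambda:=\sup_x\|\nabla^2\varphi_0(x)\|_{\mathrm{op}}\le\|\varphi_0\|_{\calC^2}$:
\begin{itemize}
\item the Jacobian equation $p=q(\nabla\varphi_0)\det\nabla^2\varphi_0$ holds a.e., so $\gamma^{-2}\le\det\nabla^2\varphi_0\le\gamma^2$ everywhere by continuity;
\item your Step~2 then gives $\lambda=\max\{\Lambda,\gamma^2\Lambda^{d-1}\}$, depending on $\gamma$, $d$ and $\|\varphi_0\|_{\calC^2}$.
\end{itemize}
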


Now we are ready to prove the proposition.
\begin{proof}[Proof of \Cref{prop:stability.COT}]\label{proof:prop:stability.COT}
In the following, we denote $\phi_z(y):= \phi(y,z) $, $\psi_z(y):= \psi(y,z) $. 

First, applying \Cref{lem:convex_lam} under \Cref{a:boundedness_assp} ($\gamma$) and also by the assumption that $\|\phi(\cdot, z)\|_{\calC^2([0,1]^{d_Y})}$ is bounded by $C_2$ uniformly for $z$, we get, there exists a constant $\lambda > 0$ that depends on $\gamma$ and $C_2$, such that for any $(y, z) \in [0,1]^{d_Y + d_Z}$,
\[
    \lambda^{-1}I_{d_Y} \preceq \nabla^2_y \varphi(y,z) \preceq  \lambda I_{d_Y}.
\]

For all $z\in [0,1]^{d_Z}$, under \Cref{a:boundedness_assp} and   with constant $\lambda > 0$, apply \Cref{prop:stability.OT} to $(\PP_Y^z, \QQ_Y^z)$, we get
\begin{align*}    
    0 \leq &\underbrace{W_2(\widehat \PP_Y^{z},\widehat \QQ_Y^{z})^2 - W_2(\PP_Y^z,\QQ_Y^z)^2}_{:=\text{Term (a)}}- \underbrace{\int \phi_z \diff (\widehat \PP_Y^{z} - \PP_Y^z) - \int \psi_z \diff (\widehat \QQ_Y^{z} - \QQ_Y^z)}_{:=\text{Term (b)}} \\
    \leq& \lambda \underbrace{\left(W_2(\widehat \PP_Y^{z}, P_Y^z) + W_2(\widehat \QQ_Y^{z}, \QQ_Y^z)\right)^2}_{:=\text{Term (c)}},
\end{align*}
for any $z \in [0,1]^{d_Z}$.
To derive the final result, we integrate the term (a) to (c) in the inequality above with respect to the empirical marginal distribution $\widehat R(\diff z) = \widehat{\PP}_Z(\diff z) = \widehat{\QQ}_Z(\diff z)$. 
Here we use the alignment property that $\widehat \PP$ and $\widehat \QQ$ share the same
marginal distribution of $Z$.

\textbf{Term (a)}. By the alignment property of the estimated densities $\widehat R(\diff z) = \widehat{\PP}_Z(\diff z) = \widehat{\QQ}_Z(\diff z)$, we have 
    \[
        \int W_2(\widehat \PP_Y^{z},\widehat \QQ_Y^{z})^2  \widehat R(\diff z) = \Cwass_{2}(\widehat \PP, \widehat \QQ)^2.
    \]
    In addition, by the alignment property of the true densities $R(\diff z) = {\PP}_Z(\diff z) = {\QQ}_Z(\diff z)$, 
    \begin{align}\label{eq:proof:stability.COT.1}
    \begin{split}
       \int W_2(\PP_Y^z,\QQ_Y^z)^2  \widehat R(\diff z)  &= \int W_2(\PP_Y^z,\QQ_Y^z)^2  R(\diff z) + \int W_2(\PP_Y^z,\QQ_Y^z)^2  (\widehat R(\diff z) - R(\diff z))\\
        &= \Cwass_2(\PP, \QQ)^2 + \int W_2(\PP_Y^z,\QQ_Y^z)^2  (\widehat R(\diff z) - R(\diff z)).
    \end{split}
    \end{align}

\textbf{Term (b)}. We have
    \begin{align*}
        &\int \int \phi_z(y) (\widehat \PP_Y^z(\diff y) - P_Y^z(\diff y)) \widehat R(\diff z) \\
        =& \int \int \phi_z(y) (\widehat \PP_Y^z(\diff y) \widehat R(\diff z) - P_Y^z(\diff y) \widehat R(\diff z))\\
        =& \int \int \phi_z(y) (\widehat \PP_Y^z(\diff y) \widehat R(\diff z) - P_Y^z(\diff y) R(\diff z)) - \int \int \phi_z(y) P_Y^z(\diff y) (\widehat R(\diff z) - R(\diff z))\\
        =& \int \phi \diff (\widehat \PP - \PP) - \int \int \phi_z(y) P_Y^z(\diff y) (\widehat R(\diff z) - R(\diff z),
    \end{align*}
    where we use the definition of $\phi$ and $\psi$.
    Similarly,
    \begin{align*}
        &\int \int \psi_z(y) (\widehat \QQ_Y^z(\diff y) - \QQ_Y^z(\diff y)) \widehat R(\diff z) \\
        =& \int \psi \diff (\widehat \QQ - \QQ) - \int \int \psi_z(y) \QQ_Y^z(\diff y) (\widehat R(\diff z) - R(\diff z)).
    \end{align*}
    By the optimality relation of COT,
    \begin{align*}        
        W_2(\PP_Y^{z},\QQ_Y^z)^2 = \int \phi_z(y) \PP_Y^z(\diff y) + \int \psi_z(y) \QQ_Y^z(\diff y).
    \end{align*}
    Therefore, we have
    \begin{align}\label{eq:proof:stability.COT.2}
    \begin{split}
        &\int \int \phi_z(y) (\widehat \PP_Y^z(\diff y) - \PP_Y^z(\diff y)) \widehat R(\diff z)  + \int \int \psi_z(y) (\widehat \QQ_Y^z(\diff y) - \QQ_Y^z(\diff y)) \widehat R(\diff z) \\
        =& \int \phi \diff (\widehat \PP - \PP)  + \int \psi \diff (\widehat \QQ - \QQ) - \int W_2(\PP_Y^z, \QQ_Y^z)^2  (\widehat R(\diff z) - R(\diff z)).
        \end{split}
    \end{align}

\textbf{Term (c)}. The integration of Term (c) over $\widehat{R}(dz)$ together with the inequality $(a+b)^2 < 2a^2+2b^2$ give the right hand side of the inequality of \Cref{prop:stability.COT}.

Note that the term $\int \psi \diff (\widehat \QQ - \QQ) - \int W_2(\PP_Y^z, \QQ_Y^z)^2  (\widehat R(\diff z) - R(\diff z))$ shows up in both \eqref{eq:proof:stability.COT.1}, \eqref{eq:proof:stability.COT.2} and are thus canceled.
Combining term (a), term (b), and term (c), we get the desired result.
\end{proof}

\subsection{Proof of \Cref{theo:CLT}}\label{appe:sec:proof:theo:CLT}
First we need a lemma to prove the smoothness of $\phi, \psi$.
\begin{lemma}[Smooth Kantorovich potential]\label{lem:smooth_kan}
    Under \Cref{a:boundedness_assp}-\ref{a:density_smooth}, when $2s > d_Y + d_Z$, $\varphi \in \calC^{s+1}([0,1]^{d_Y+d_Z})$, then $\phi, \psi \in \calC^{s+1}([0,1]^{d_Y+d_Z})$.
\end{lemma}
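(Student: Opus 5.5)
The claim for $\phi$ is immediate. The claim for $\psi$ reduces to showing that the partial convex conjugate $\varphi^*(\cdot,z)$ inherits joint $\calC^{s+1}$ regularity in $(y,z)$ from $\varphi$. I would do this by writing $\varphi^*$ through the inverse of the Brenier map and applying the implicit function theorem.

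\textbf{Step 1 ($\phi$).} Since $\phi(y,z)=\|y\|_2^2-\varphi(y,z)$ and $\|y\|_2^2$ is a polynomial, hence in $\calC^\infty([0,1]^{d_Y+d_Z})$, the assumption $\varphi\in\calC^{s+1}$ gives $\phi\in\calC^{s+1}$ with $\|\phi\|_{\calC^{s+1}}\le \|\varphi\|_{\calC^{s+1}}+C(d_Y,s)$.

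\textbf{Step 2 (strong convexity and the inverse map).} Note that $2s>d_Y+d_Z\ge 2$ forces $s>1$, so $\varphi\in\calC^{s+1}\subset\calC^2$ jointly, uniformly in $z$. Under \Cref{a:boundedness_assp}, the conditional densities satisfy $p(y\mid z),q(y\mid z)\in[\gamma^{-2},\gamma^{2}]$. Hence \Cref{lem:convex_lam} applies for each $z$ and gives
\[
\lambda^{-1}I_{d_Y}\preceq\nabla_y^2\varphi(y,z)\preceq\lambda I_{d_Y}
\]
uniformly in $(y,z)$, exactly as in the proof of \Cref{prop:stability.COT}. For each $z$, the map $\nabla_y\varphi(\cdot,z)$ is the Brenier map pushing $\PP_Y^z$ onto $\QQ_Y^z$. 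It is a continuous injective map from $[0,1]^{d_Y}$ onto the support $[0,1]^{d_Y}$ of $\QQ_Y^z$. Define $T(x,z)$ as the unique solution $y$ of $\nabla_y\varphi(y,z)=x$.

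Consider $F(y,x,z)=\nabla_y\varphi(y,z)-x$, which lies in $\calC^{s}$. Its Jacobian in $y$ is $\nabla_y^2\varphi$, which is invertible with $\|(\nabla_y^2\varphi)^{-1}\|\le\lambda$. The implicit function theorem then gives $T\in\calC^{1}$, with
\[
\nabla_{(x,z)}T=(\nabla_y^2\varphi(T,z))^{-1}\bigl[I,\,-\nabla_z\nabla_y\varphi(T,z)\bigr].
\]
Bootstrapping this formula (matrix inversion is smooth on $\{A\succeq\lambda^{-1}I\}$, and $\calC^{s-1}$ is stable under composition with $\calC^{1}$ maps) upgrades $T$ to $\calC^{s}$. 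The Hölder constants depend only on $\lambda$ and $\|\varphi\|_{\calC^{s+1}}$.

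\textbf{Step 3 (formula for $\varphi^*$).} Since the supremum defining $\varphi^*(x,z)$ is attained at $y'=T(x,z)$, we have
\[
\varphi^*(x,z)=x^\top T(x,z)-\varphi(T(x,z),z).
\]
By the envelope argument, $\nabla_x\varphi^*=T$ and $\nabla_z\varphi^*=-\nabla_z\varphi(T(x,z),z)$. Both are $\calC^{s}$: the first by Step 2, the second as a composition of the $\calC^{s}$ map $\nabla_z\varphi$ with the $\calC^{s}$ map $(x,z)\mapsto(T(x,z),z)$. Hence $\varphi^*\in\calC^{s+1}$ jointly, and $\psi=\|y\|_2^2-\varphi^*\in\calC^{s+1}$.

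\textbf{Main obstacle.} The delicate point is Step 3 near the boundary of $[0,1]^{d_Y}$. There the supremum over the box could a priori be attained at a boundary point where the first-order condition fails, and the implicit function theorem must be applied up to the boundary. I would handle this using that $\nabla_y\varphi(\cdot,z)$ maps $[0,1]^{d_Y}$ onto $[0,1]^{d_Y}$. For $x$ in this image, the point $T(x,z)$ satisfies $\nabla_y\varphi(T,z)=x$, so it is a global maximizer of the concave function $y'\mapsto x^\top y'-\varphi(y',z)$, even on the boundary. To apply the implicit function theorem there, I would extend $\varphi$ to a $\calC^{s+1}$ function on a neighborhood, for example by a Whitney-type extension that preserves strong convexity on a small collar. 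The remaining work is checking that the Hölder seminorm of the composition and of the inverse is bounded uniformly in $z$, via Faà di Bruno-type estimates.
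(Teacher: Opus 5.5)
Your proposal is correct and uses the same mechanism as the paper's proof. Both rely on uniform strong convexity of $\varphi(\cdot,z)$ from \Cref{lem:convex_lam} together with the inverse-Hessian identity $\nabla_y^2\varphi^*(y,z)=\nabla_y^2\varphi(\nabla_y\varphi^*(y,z),z)^{-1}$; this identity is exactly your implicit-function formula for $\nabla_x T$, since $T=\nabla_x\varphi^*$. Your version is more complete, because the paper's one-line appeal to $(\star)$ only controls $y$-derivatives. Your envelope identity $\nabla_z\varphi^*=-\nabla_z\varphi(T(x,z),z)$, the Hölder bootstrap, and the check that $\nabla_y\varphi(\cdot,z)$ maps $[0,1]^{d_Y}$ onto itself (so the supremum defining $\varphi^*$ is attained at $T$) supply the joint $(y,z)$-regularity and boundary behavior the statement actually asserts.
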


\begin{proof}[Proof of \Cref{theo:CLT}]
    Since $s > d_Y + d_Z$, thus $s > 2$. Since $\varphi \in \calC^{s+1}([0,1]^{d_Y + d_Z})$, we apply \Cref{prop:stability.COT} and get
    \begin{align*}
        0 \leq & \Cwass_2(\widehat P_n^{\dagger},\widehat Q_n^{\dagger})^2 - \Cwass_2(\PP,\QQ)^2 - \int \phi \diff (\widehat P_n^{\dagger} - \PP) - \int \psi \diff (\widehat Q_n^{\dagger} - \QQ) \\
        \leq & 2\lambda \int W^2_2(\widehat \PP_Y^z, \PP_Y^z) + W^2_2(\widehat \QQ_Y^z, \QQ_Y^z) \widehat R_{n}(\diff z),
    \end{align*}
    where $\lambda$ is a constant that depends on $\gamma, \|\varphi\|_{\calC^2}$. 

    By \Cref{prop:core_rate}, we have 
    \[
         \EE\left[\int W^2_2(\widehat \PP_Y^z, \PP_Y^z) \, \widehat R(\diff z)\right] \vee \EE\left[\int W^2_2(\widehat \QQ_Y^z, \QQ_Y^z) \, \widehat R(\diff z)\right] \lesssim n^{-\frac{2s}{2s+d_Y+d_Z}}.
    \]
    Therefore, when $2s > d_Y+d_Z$, 
    \[
        \int W^2_2(\widehat \PP_Y^z, \PP_Y^z) + W^2_2(\widehat \QQ_Y^z, \QQ_Y^z) \widehat R_{n}(\diff z) = o_p(n^{-1/2}).
    \]
        
    It remains to show that 
    \[
        \sqrt{n} \left(\int \phi \diff (\widehat P_n^{\dagger} - \PP) +  \int \psi \diff (\widehat Q_n^{\dagger} - \QQ) \right) \overset{\textup{d}}{\rightarrow} \calN(0, \sigma^2).
    \]

We use $\hat r(z)$ to denote the density of $\widehat R_{n}(z)$.
We use $\mathcal Y \times \mathcal Z$ and $[0,1]^{d_Y+d_Z}$ interchangeably.
Since $\mathcal Y \times \mathcal Z$ is compact and $p(y,z)$, $q(y,z)$, $\phi(y,z)$, $\psi(y,z) \in \mathcal C^{s+1}(\mathcal Y \times \mathcal Z)$, there exists $M > 0$ such that $p(y,z)$, $q(y,z)$, $|\phi(y,z)|$, $|\psi(y,z)| < M$ for any $(y,z) \in \mathcal Y \times \mathcal Z$.

According to \Cref{defi:wavelet_estimator}, we can decompose $\int \phi \diff (\widehat P_n^{\dagger} - \PP)$ as
\begin{align*}
    \int \phi \diff (\widehat P_n^{\dagger} - \PP) 
    =& \underbrace{\int \phi \diff (\widehat P_n^{\dagger} -  \widehat \PP_n)}_{:=\text{Term (a)}} + \underbrace{\int \phi \diff (\widehat \PP_n - \PP)}_{:=\text{Term (b)}}.
\end{align*}

\textbf{Term (a)}. In the following, we ignore the dependence on $n$.
\begin{align*}
\int \phi \diff (\widehat P_n^{\dagger} -  \widehat \PP_n)
=& \int \phi(y,z) \left(\frac{\hat p(y,z)}{\hat p_Z(z)} \hat r(z) - \frac{\hat p(y,z)}{\hat p_Z(z)} \hat p_Z(z)  \right) \diff y \diff z  \quad (\text{\Cref{defi:wavelet_estimator}})\\
=& \int \phi(y,z) \frac{\hat p(y,z)}{\hat p_Z(z)} \left(\hat r(z) -\hat p_Z(z)\right) \diff y\diff z \\
=& \underbrace{\int \phi(y,z) \frac{p(y,z)}{p_Z(z)} \left(\hat r(z) -\hat p_Z(z)\right) \diff y\diff z}_{:=\text{Term (a.1)}} \\
&+ \underbrace{\int \phi(y,z) \left(\frac{\hat p(y,z)}{\hat p_Z(z)} - \frac{p(y,z)}{p_Z(z)} \right) \left(\hat r(z) -\hat p_Z(z)\right) \diff y\diff z}_{:=\text{Term (a.2)}}.
\end{align*}
Term (a.1) is relatively straightforward as  $\hat r(z)$ and $\hat p_Z(z)$ are based on marginalizing standard wavelet-based density estimators, and we postpone the analysis.
For Term (a.2), by the Cauchy-Schwarz inequality,
\begin{align*}
    &\left(\int \phi(y,z) \left(\frac{\hat p(y,z)}{\hat p_Z(z)} - \frac{p(y,z)}{p_Z(z)} \right) \left(\hat r(z) -\hat p_Z(z)\right) \diff y\diff z \right)^2\\
    \le &M^2 \underbrace{\int \left(\frac{\hat p(y,z)}{\hat p_Z(z)} - \frac{p(y,z)}{p_Z(z)} \right)^2 \diff y\diff z}_{:=\text{Term (a.2.1)}}  \underbrace{\int \left(\hat r(z) -\hat p_Z(z)\right)^2 \diff z}_{:=\text{Term (a.2.2)}}.
\end{align*}

For Term (a.2.1), we decompose it as
\begin{align}\label{proof:eq:a.2.1}
\begin{split}
    \text{Term (a.2.1)} 
    =& \int  \left(\frac{\hat p(y,z)}{\hat p_Z(z)} - \frac{p(y,z)}{\hat p_Z(z)}  +  \frac{p(y,z)}{\hat p_Z(z)} - \frac{p(y,z)}{p_Z(z)} \right)^2 \diff y\diff z\\
    \le & 2  \int \frac{1}{\hat p_Z(z)^2} \left(\hat p(y,z)-p(y,z)\right)^2 \diff y\diff z + 2\frac{M^2}{\hat p_Z(z)^2 p_Z(z)^2} \int  \left(\hat p_Z(z)-p_Z(z)\right)^2 \diff z.
    \end{split}
\end{align}
Then, we apply the same reasoning as in the proof of \Cref{prop:core_rate} that with probability at least $1 - c / n^2$ for some constant $c$, $\hat p_Z(z) \geq \gamma^{-1}/2\,\forall z$. Thus,
\[
    \text{Term (a.2.1)} \leq 2 (2\gamma)^4   \int \left(\hat p(y,z)-p(y,z)\right)^2 \diff y\diff z + 2M^2 \int  \left(\hat p_Z(z)-p_Z(z)\right)^2 \diff z.
\]

Again, by the same reasoning as in the proof of \Cref{prop:core_rate}, applying \Cref{prop:L2-bound}, we get 
\[
    \EE[\text{Term (a.2.1)}] \lesssim n^{-\frac{2s}{2s + d_Y + d_Z}}.
\]
When $2s > d_Y + d_Z$, $\text{Term (a.2.1)} = o_p(n^{-1/2})$.

For Term (a.2.2), since 
\begin{align*}
    \hat r(z) -\hat p_Z(z) &= (\hat r(z) - p_Z(z)) -(\hat p_Z(z)-p_Z(z)),\\
    &= \frac{1}{2}(\hat p_Z(z) - p_Z(z)) + \frac{1}{2}(\hat q_Z(z) - p_Z(z)) -(\hat p_Z(z)-p_Z(z)) & (\text{\Cref{defi:wavelet_estimator}}).
\end{align*}
Then, similarly, we have, when $2s > d_Y + d_Z$, $\text{Term (a.2.2)} = o_p(n^{-1/2})$.

Jointly, and we get $\text{Term (a.2)} = o_p(n^{-1/2})$, 
which implies that in Term (a.2) is ignorable.

Next, we bring Term (b), Term (a.1), as well as the counterparts for $Q$ together.
By the construction of $\hat p^\dagger(y,z)$ (\Cref{defi:wavelet_estimator}), we have $\hat r(z)
  = \left(\hat p_Z(z) + \hat q_Z(z)\right) / 2$, which implies \begin{align}\label{proof:eq:wavelet.marginal}
  -(\hat r(z) -\hat p_Z(z)) = \hat r(z) -\hat q_Z(z) = (\hat p_Z(z) -\hat q_Z(z))/2.
  \end{align}
Then
\begin{align*}
    &\int \phi \diff (\widehat P_n^{\dagger} - \PP) + \int \psi \diff (\widehat Q_n^{\dagger} - \QQ)\\
    =& \underbrace{\int \phi(y,z) \frac{p(y,z)}{p_Z(z)} \left(\hat r(z) -\hat p_Z(z)\right) \diff y\diff z}_{\text{Term (a.1)}} + \underbrace{\int \phi(y,z) (\hat p(y,z) - p(y,z)) \diff y \diff z}_{\text{Term (b)}}  \\
    &+  \int \psi(y,z) \frac{q(y,z)}{q_Z(z)} \left(\hat r(z) -\hat q_Z(z)\right) \diff y\diff z + \int \psi(y,z) (\hat q(y,z) - q(y,z)) \diff y \diff z + o_p(n^{-1/2})\\
    = &  \int \left(\phi(y,z) \frac{p(y,z)}{p_Z(z)}-\psi(y,z) \frac{q(y,z)}{q_Z(z)}\right) \left(\frac{\hat q_Z(z) -\hat p_Z(z)}{2}\right)\diff y\diff z \qquad (\text{by Eq.~\eqref{proof:eq:wavelet.marginal}})\\
    &+ \int \phi(y,z) (\hat p(y,z) - p(y,z)) \diff y \diff z + \int \psi(y,z) (\hat q(y,z) - q(y,z)) \diff y \diff z + o_p(n^{-1/2}) \\
   =& \underbrace{\int -\left(\phi(y,z) \frac{p(y,z)}{p_Z(z)}-\psi(y,z) \frac{q(y,z)}{q_Z(z)}\right) \left(\frac{\hat p_Z(z) - p_Z(z)}{2}\right) + \phi(y,z) (\hat p(y,z) - p(y,z))  \diff y\diff z}_{:=\text{Term (c)}} \\
   &+\underbrace{\int \left(\phi(y,z) \frac{p(y,z)}{p_Z(z)}-\psi(y,z) \frac{q(y,z)}{q_Z(z)}\right) \left(\frac{\hat q_Z(z) - q_Z(z)}{2}\right) + \psi(y,z) (\hat q(y,z) - q(y,z))  \diff y\diff z}_{:=\text{Term (d)}}+ o_p(n^{-1/2})\\
    &\quad \quad \quad \quad (p_Z(z)=q_Z(z)~\text{and}~\hat q_Z(z) - \hat p_Z(z)=\hat q_Z(z) - q_Z(z) - (\hat p_Z(z) - p_Z(z)))\\
\end{align*}
For Term (c), as $\hat p_Z(z) - p_Z(z) = \int \hat p(y,z) - p(y,z) \diff y \diff z$,
\begin{align*}
    \text{term (c)}
     =& \int \left(\int -\left(\phi(y',z) \frac{p(y',z)}{p_Z(z)}-\psi(y',z) \frac{q(y',z)}{q_Z(z)}\right) \diff y'\right) \left(\int \left(\frac{\widehat   p(y, z) - p(y,z)}{2}\right) \diff y \right) \diff z\\
     &+ \int \phi(y,z) (\hat p(y,z) - p(y,z))  \diff y\diff z\\
     =& \int \underbrace{\left(\phi(y,z)-\frac{1}{2}\left(\int\left(\phi(y',z) \frac{p(y',z)}{p_Z(z)}-\psi(y',z) \frac{q(y',z)}{q_Z(z)}\right) \diff y'\right) \right)}_{=\eta(y,z)} (\hat p(y,z) - p(y,z))  \diff y\diff z.
\end{align*}
As $\phi(y,z)$, $\psi(y,z)$, $p(y,z)$, $q(y,z) \in \mathcal{C}^{s+1}(\mathcal Y \times \mathcal Z)$ and $p_Z, q_Z \geq \gamma^{-1}$, we have $\eta(y,z) \in \mathcal C^{s+1}(\mathcal Y \times \mathcal Z)$.
By Lemma 11 (control bias) and Lemma 66 (central limit theorem) of \parencite{manole2024plugin}, we establish the central limit theorem for $\sqrt{n} \times$~term (c) with the variance
\begin{align*}
    2\var_{\PP}\left(\eta(y,z)\right)
    =2\var_{\PP}\left(\phi(Y,Z) - \frac{\EE_{\PP}[\phi(Y,Z) \mid Z] - \EE_{\QQ}[\psi(Y,Z) \mid Z])}{2}\right).
\end{align*} 
The same argument applies to term (d). 
Note that term (c) and term (d) are based on different observations and thus independent, which yields a central limit theorem for their sum scaled by $\sqrt{n}$ with the asymptotic variance $2 \left(\var_{\PP}\left(\eta(y,z)\right) + \var_{\QQ}\left(\kappa(y,z)\right)\right)$.
\end{proof}

\subsection{Proof of Proposition~\ref{prop:embedding}}
    
\begin{proof}[Proof of Proposition~\ref{prop:embedding}]
The first part $\calC^s([0,1]^d) \subseteq \mathcal{B}^s_{\infty,\infty}([0,1]^{d})$ has been proved in \cite[Lemma 25]{manole2024plugin}. We focus on the second part: $\calC^s \subseteq \mathcal{B}^{s - \epsilon}_{2,2}$ for any $\epsilon > 0$.

    \begin{lemma}[H\"older to Sobolev/Besov with $\varepsilon$-loss]\label{lem:Holder_to_Besov_eps}
Let $d, s>0$. For any $\varepsilon>0$, there exists a constant $C=C(d,s,\varepsilon)$ such that
\[
\|f\|_{B^{\,s-\varepsilon}_{2,2}([0,1]^d)} \le C\,\|f\|_{\calC^{s}([0,1]^d)}
\qquad \forall f\in \calC^{s}([0,1]^d).
\]
Equivalently, $\calC^{s}([0,1]^d)\hookrightarrow B^{\,s-\varepsilon}_{2,2}([0,1]^d)$.
\end{lemma}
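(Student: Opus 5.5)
The plan is to pass through the intermediate space $\mathcal{B}^{s}_{\infty,\infty}([0,1]^d)$, using the first part of Proposition~\ref{prop:embedding} (i.e.\ \cite[Lemma 25]{manole2024plugin}). That result gives $\|f\|_{\mathcal{B}^{s}_{\infty,\infty}} \le C_1\|f\|_{\calC^s}$. It then remains to prove the purely sequence-space embedding $\mathcal{B}^{s}_{\infty,\infty}([0,1]^d)\hookrightarrow \mathcal{B}^{s-\varepsilon}_{2,2}([0,1]^d)$. We work with the wavelet characterization of Definition~\ref{defi:Besov.space}, for the boundary-corrected system of \Cref{sec:bc_wavelet} with $j_0$ chosen large enough relative to $s$, so that both norms are characterized by coefficients.

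\textbf{Scaling-function part.} The set $\Phi$ is finite, with $2^{j_0 d}$ elements. Hence $\|(\theta_\zeta)\|_{\ell_2}\le 2^{j_0d/2}\|(\theta_\zeta)\|_{\ell_\infty}$.

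\textbf{Detail levels.} At level $j$ the set $\Psi_j$ has at most $(2^d-1)2^{jd}$ elements, so
\[
\|(\theta_\xi)_{\xi\in\Psi_j}\|_{\ell_2}\le 2^{d/2}\,2^{jd/2}\|(\theta_\xi)_{\xi\in\Psi_j}\|_{\ell_\infty}.
\]
The $\mathcal{B}^{s}_{\infty,\infty}$ norm controls $\sup_j 2^{j(s+d/2)}\|(\theta_\xi)_{\Psi_j}\|_{\ell_\infty}=:A$. The $(2,2)$ weight at smoothness $s-\varepsilon$ is $2^{j(s-\varepsilon+d/2-d/2)}=2^{j(s-\varepsilon)}$. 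Therefore
\[
2^{j(s-\varepsilon)}\|(\theta_\xi)_{\Psi_j}\|_{\ell_2}\le 2^{d/2}2^{j(s-\varepsilon)}2^{jd/2}\,2^{-j(s+d/2)}A=2^{d/2}2^{-j\varepsilon}A.
\]
Taking the $\ell_2$ norm over $j\ge j_0$ gives a geometric series, bounded by $2^{d/2}(1-2^{-2\varepsilon})^{-1/2}A$. This is exactly where the $\varepsilon$-loss is spent: without it the $\ell_2$ sum over $j$ of a bounded sequence would diverge.

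Combining the two parts gives $\|f\|_{\mathcal{B}^{s-\varepsilon}_{2,2}}\le C(d,s,\varepsilon)\|f\|_{\mathcal{B}^s_{\infty,\infty}}\le C\|f\|_{\calC^s}$. Note also that $f\in L^2$ holds trivially, since $f$ is bounded on a compact set.

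The main obstacle is a technical point: the same wavelet basis must characterize both Besov spaces, $\mathcal{B}^{s}_{\infty,\infty}$ and $\mathcal{B}^{s-\varepsilon}_{2,2}$, with the norms of Definition~\ref{defi:Besov.space}. This requires the regularity and vanishing moments of the boundary-corrected wavelets to exceed $s$. I would handle it by citing the standard characterization under the choice of $j_0$ and wavelet order in Definition~\ref{defi:wavelet.density.estimator}. Alternatively, one can simply take Definition~\ref{defi:Besov.space} as the definition of the norms, in which case the argument above is fully rigorous as stated.
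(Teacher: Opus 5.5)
Your proposal is correct and follows essentially the same route as the paper. Both arguments take the coefficient decay $2^{-j(s+d/2)}$ from \cite[Lemma 25]{manole2024plugin}, use the count of at most $(2^d-1)2^{jd}$ coefficients per level to pass from $\ell_\infty$ to $\ell_2$, and spend the $\varepsilon$ on the convergent series $\sum_j 2^{-2j\varepsilon}$. The only cosmetic difference is that you package the decay, and the bound on the scaling coefficients, as the $\mathcal{B}^{s}_{\infty,\infty}$ norm. The paper instead extracts the decay from the proof of that lemma and bounds the scaling coefficients separately.
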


\begin{proof}
Fix $\varepsilon>0$ and set $t=s-\varepsilon$. Let $\{\phi_k\}_{k\in\calI_0}$ be the (boundary-corrected) scaling functions at the coarsest resolution and
$\{\psi_{j,k}\}_{j\ge j_0,\;k\in\calI_j}$ be a compactly supported, boundary-corrected wavelet basis on $[0,1]^d$ with regularity strictly larger than $s$.
Then every $f\in L^2([0,1]^d)$ admits the wavelet expansion
\[
f
=
\sum_{k\in\calI_0} \alpha_k\,\phi_k
\;+\;
\sum_{j\ge j_0}\sum_{k\in\calI_j} \beta_{j,k}\,\psi_{j,k},
\qquad
\alpha_k=\langle f,\phi_k\rangle,\ \beta_{j,k}=\langle f,\psi_{j,k}\rangle .
\]
We use the standard wavelet characterizations of Besov norms on $[0,1]^d$ (with boundary correction):
\begin{align}
\|f\|_{B^{t}_{2,2}([0,1]^d)}^2
&\lesssim
\sum_{k\in\calI_0}|\alpha_k|^2
+
\sum_{j\ge j_0} 2^{2jt}\sum_{k\in\calI_j}|\beta_{j,k}|^2 .
\label{eq:Besov_wavelet_char}
\end{align}
Let $M:=\|f\|_{\calC^{s}([0,1]^d)}$. By the proof of \cite[Lemma 25]{manole2024plugin}, we have
\begin{equation}\label{eq:beta_decay}
\sup_{k\in\calI_j}|\beta_{j,k}|
\le
C_1\, M\, 2^{-j(s+d/2)}
\qquad \forall j\ge j_0,
\end{equation}
for some constant $C_1$ depending only on $(d,s)$ and the chosen wavelet system.

Next, recall that the number of wavelets at level $j$ satisfies $|\calI_j|\lesssim 2^{jd}$.
Using \eqref{eq:beta_decay},
\begin{equation}\label{eq:l2_bound_each_scale}
\sum_{k\in\calI_j}|\beta_{j,k}|^2
\le
|\calI_j|\Big(\sup_{k\in\calI_j}|\beta_{j,k}|\Big)^2
\lesssim
2^{jd}\cdot \big(M^2 2^{-2j(s+d/2)}\big)
=
C_2\,M^2\,2^{-2js},
\end{equation}
for some constant $C_2$.

Multiplying \eqref{eq:l2_bound_each_scale} by $2^{2jt}=2^{2j(s-\varepsilon)}$ yields
\[
2^{2jt}\sum_{k\in\calI_j}|\beta_{j,k}|^2
\le
C_2\,M^2\,2^{2j(s-\varepsilon)}\cdot 2^{-2js}
=
C_2\,M^2\,2^{-2j\varepsilon}.
\]
Summing over $j\ge j_0$ and using $\sum_{j\ge j_0}2^{-2j\varepsilon}<\infty$, we obtain
\begin{equation}\label{eq:high_freq_sum}
\sum_{j\ge j_0} 2^{2jt}\sum_{k\in\calI_j}|\beta_{j,k}|^2
\le
C_3\,M^2,
\end{equation}
for a constant $C_3=C_3(d,s,\varepsilon)$.

Finally, again by \cite[Lemma 2]{manole2024plugin}, $\sup_{k\in\calI_0}|\alpha_k| \leq M$, thus
\begin{equation}\label{eq:low_freq_sum}
\sum_{k\in\calI_0}|\alpha_k|^2
\le
|\calI_0|\Big(\sup_{k\in\calI_0}|\alpha_k|\Big)^2
\le
C_4\,M^2.
\end{equation}
Combining \eqref{eq:Besov_wavelet_char}--\eqref{eq:low_freq_sum} gives
\[
\|f\|_{B^{t}_{2,2}([0,1]^d)}^2
\le
C_5\,M^2,
\]
hence $\|f\|_{B^{s-\varepsilon}_{2,2}([0,1]^d)} \le \sqrt{C_5}\,\|f\|_{\calC^{s}([0,1]^d)}$, proving the claim.
\end{proof}

\end{proof}

\subsection{Proof of Proposition~\ref{prop:L2-bound}}
In this section, we prove Proposition~\ref{prop:L2-bound} in a more general way (see Theorem~\ref{thm:L2_rate_trunc}). By Assumption~\ref{a:density_smooth} and Proposition~\ref{prop:embedding}, $p, q \in  \mathcal{B}^s_{2,2}([0,1]^{d_Y+d_Z})$ and $\gamma^{-1}\leq p,q \leq \gamma$, thus satisfying Assumptions~\ref{ass:p_smooth}--\ref{ass:wavelet}.
\begin{assumption}[Smoothness and boundedness]\label{ass:p_smooth}
Let $p$ be a density on $[0,1]^d$ such that $p \in B^{s}_{2,2}([0,1]^d)$ for some $s>0$.
Assume also $0 \le p \le M$ a.e.\ for some $M<\infty$.
\end{assumption}

\begin{assumption}[{Wavelet basis on $[0,1]^d$}]\label{ass:wavelet}
Let $\{\phi_{j_0,k}^{\mathrm{bc}}\}_{k\in\calK_0(j_0)} \cup \{\psi^{\mathrm{bc}}_{j,k,\ell}\}_{j\ge j_0,\,k\in\calK_0(j),\,\ell\in\{0,1\}^d\setminus\{0\}}$
be an orthonormal (boundary-corrected) wavelet basis of $L^2([0,1]^d)$.
Write the coefficients of $p$ as
\[
\alpha_{k} := \langle p, \phi_{j_0,k}^{\mathrm{bc}}\rangle,
\qquad
\beta_{j,k,\ell} := \langle p, \psi^{\mathrm{bc}}_{j,k,\ell}\rangle.
\]
\end{assumption}

\paragraph{(Linear projection estimator).}
Let $X_1,\dots,X_n \sim p$ i.i.d.
Fix an integer $J\ge j_0$ and define the empirical coefficients
\[
\hat\alpha_{k} := \frac1n \sum_{i=1}^n \phi_{j_0,k}^{\mathrm{bc}}(X_i),
\qquad
\hat\beta_{j,k,\ell} := \frac1n \sum_{i=1}^n \psi^{\mathrm{bc}}_{j,k,\ell}(X_i),\quad j_0\le j\le J.
\]
Define the (untruncated) projection estimator
\[
\tilde p_J(x)
:=
\sum_{k\in\calK(j_0)} \hat\alpha_k\,\phi_{j_0,k}^{\mathrm{bc}}(x)
\;+\;
\sum_{j=j_0}^{J}\ \sum_{\ell\neq 0}\ \sum_{k\in\calK(j)}
\hat\beta_{j,k,\ell}\,\psi^{\mathrm{bc}}_{j,k,\ell}(x).
\]
Define the truncated (clipped) estimator
\[
\tilde p_J^{\dagger}(x) := \big(\tilde p_J(x)\big)_{[0,M]}
:= \min\{M,\max\{0,\tilde p_J(x)\}\}.
\]

\begin{theorem}[$L^2$ risk of the truncated projection estimator]\label{thm:L2_rate_trunc}
Under Assumptions~\ref{ass:p_smooth}--\ref{ass:wavelet},
\[
\EE\|\tilde p_J^{\dagger} - p\|_{L^2([0,1]^d)}^2
\;\lesssim\;
2^{-2Js} \;+\; \frac{2^{Jd}}{n}.
\]
Consequently, choosing $2^J \asymp n^{1/(2s+d)}$ yields
\[
\EE\|\tilde p_J^{\dagger} - p\|_2^2 \;\lesssim\; n^{-\frac{2s}{2s+d}},
\]
and moreover the risk is always bounded by $\lesssim n^{-1}$, hence
\[
\EE\|\tilde p_J^{\dagger} - p\|_2^2 \;\lesssim\; n^{-\frac{2s}{2s+d}}.
\]
The hidden constants depend only on $(s,d)$, the wavelet family, and $M$.
\end{theorem}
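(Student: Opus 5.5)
First use that truncation is a contraction: since $0\le p\le M$ a.e., the clipping map $t\mapsto \min\{M,\max\{0,t\}\}$ is $1$-Lipschitz and fixes $p(x)$. Hence $|\tilde p_J^{\dagger}(x)-p(x)|\le|\tilde p_J(x)-p(x)|$ pointwise, and it suffices to bound $\EE\|\tilde p_J-p\|_2^2$. Let $p_J$ be the exact $L^2$ projection of $p$ onto the span of scaling functions and wavelets up to level $J$. By orthonormality (Assumption~\ref{ass:wavelet}) we get the Pythagorean bias--variance split
\[
\EE\|\tilde p_J-p\|_2^2=\|p-p_J\|_2^2+\EE\|\tilde p_J-p_J\|_2^2 ,
\]
because $\tilde p_J-p_J$ lies in the span of the first $J$ levels while $p-p_J$ is orthogonal to it, and $\EE\tilde p_J=p_J$ (the empirical coefficients are unbiased).

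\textbf{Bias.} We have $\|p-p_J\|_2^2=\sum_{j>J}\sum_{k,\ell}\beta_{j,k,\ell}^2$. The wavelet characterization of the $B^s_{2,2}$ norm from Definition~\ref{defi:Besov.space}, with $p=q=2$ so the weight is $2^{js}$, gives $\sum_{j>J}\sum_{k,\ell}\beta_{j,k,\ell}^2\le 2^{-2Js}\sum_{j>J}2^{2js}\sum_{k,\ell}\beta_{j,k,\ell}^2\le 2^{-2Js}\|p\|_{B^s_{2,2}}^2$.

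\textbf{Variance.} By orthonormality, $\EE\|\tilde p_J-p_J\|_2^2$ equals the sum of $\var(\hat\alpha_k)$ and $\var(\hat\beta_{j,k,\ell})$. Each of these is at most $\frac1n\EE[g(X)^2]=\frac1n\int g^2p\le \frac{M}{n}\|g\|_2^2=\frac{M}{n}$ for a basis function $g$. The number of basis functions up to level $J$ is $|\calK_0(j_0)^d|+\sum_{j\le J}(2^d-1)2^{jd}\lesssim 2^{Jd}$, so the variance term is $\lesssim M2^{Jd}/n$.

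Adding the two bounds gives $2^{-2Js}+2^{Jd}/n$. Setting $2^J\asymp n^{1/(2s+d)}$, which is what $J_n=\lfloor\log_2 n/(2s+d)\rfloor$ achieves up to a factor $2$, balances the terms at $n^{-2s/(2s+d)}$. The constants depend on $\|p\|_{B^s_{2,2}}$, $M$, $d$, $s$ and the wavelet family.

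I would not attempt the stated ``always $\lesssim n^{-1}$'' claim, which is false in general. The final displayed bound $n^{-2s/(2s+d)}$ follows anyway.

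The only step needing care is the Besov characterization on the boundary-corrected basis: confirming that the tensor-product wavelet coefficients satisfy the norm equivalence used in the bias bound. I would take this from the standard characterization, as already used in the proof of Lemma~\ref{lem:Holder_to_Besov_eps}. The rest is routine.
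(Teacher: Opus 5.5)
Your proposal is correct and follows the paper's proof step for step: the $1$-Lipschitz clipping contraction, the Pythagorean split into $\|p-p_J\|_2^2+\|\tilde p_J-p_J\|_2^2$, the Besov tail bound $2^{-2Js}\|p\|_{B^s_{2,2}}^2$, the per-coefficient variance bound $M/n$ multiplied by a basis count $\lesssim 2^{Jd}$, and balancing at $2^J\asymp n^{1/(2s+d)}$. You are also right to decline the ``always $\lesssim n^{-1}$'' claim, since the variance term is genuinely of order $2^{Jd}/n$ (for instance when $p$ is bounded below); the paper's proof never establishes that claim either, and the final displayed rate does not depend on it.
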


\begin{proof}
\textbf{Step 0 (Truncation cannot increase $L^2$ error).}
The clipping map $T(u)=(u)_{[0,M]}$ is $1$-Lipschitz on $\RR$, i.e.\ $|T(u)-T(v)|\le |u-v|$.
Therefore,
\[
\|\tilde p_J^{\dagger} - p\|_2^2
=
\|T(\tilde p_J)-T(p)\|_2^2
\le
\|\tilde p_J - p\|_2^2,
\]
since $p\in[0,M]$ a.e.\ by assumption. Hence it suffices to bound $\EE\|\tilde p_J-p\|_2^2$.

\textbf{Step 1 (Orthogonal bias--variance decomposition).}
By orthonormality,
\[
\|\tilde p_J - p\|_2^2
=
\|\tilde p_J - p_J\|_2^2 + \|p_J - p\|_2^2,
\]
and taking expectations gives
\[
\EE\|\tilde p_J - p\|_2^2
=
\EE\|\tilde p_J - p_J\|_2^2 + \|p_J - p\|_2^2.
\]

\textbf{Step 2 (Bias bound).}
Since $p\in B^s_{2,2}([0,1]^d)$,
the wavelet characterization yields the tail energy bound
\[
\|p-p_J\|_2^2
=
\sum_{j>J}\ \sum_{\ell\neq 0}\ \sum_{k\in\calK(j)} \beta_{j,k,\ell}^2
\;\lesssim\;
2^{-2Js}\,\|p\|_{B^s_{2,2}}^2.
\]

\textbf{Step 3 (Variance bound).}
By orthonormality,
\[
\EE\|\tilde p_J - p_J\|_2^2
=
\sum_{k\in\calK(j_0)} \EE(\hat\alpha_k-\alpha_k)^2
\;+\;
\sum_{j=j_0}^{J}\ \sum_{\ell\neq 0}\ \sum_{k\in\calK(j)}
\EE(\hat\beta_{j,k,\ell}-\beta_{j,k,\ell})^2.
\]
Each empirical coefficient is an average of i.i.d.\ terms, so
\[
\EE(\hat\beta_{j,k,\ell}-\beta_{j,k,\ell})^2
=
\frac{1}{n}\textup{Var}\!\big(\psi^{\mathrm{bc}}_{j,k,\ell}(X_1)\big)
\le
\frac{1}{n}\EE\big[\psi^{\mathrm{bc}}_{j,k,\ell}(X_1)^2\big]
\le
\frac{\|p\|_{\infty}}{n}\int_{[0,1]^d} \big(\psi^{\mathrm{bc}}_{j,k,\ell}(x)\big)^2\,dx
=
\frac{\|p\|_{\infty}}{n},
\]
using $\|p\|_\infty\le M$ and $\|\psi^{\mathrm{bc}}_{j,k,\ell}\|_2=1$.
The same bound holds for the scaling coefficients.
Hence
\[
\EE\|\tilde p_J - p_J\|_2^2
\;\lesssim\;
\frac{1}{n}\Big(|\calK(j_0)| + \sum_{j=j_0}^{J}\sum_{\ell\neq 0}|\calK_0(j)|\Big).
\]
On $[0,1]^d$, $|\calK(j)|\asymp 2^{jd}$ and there are $(2^d-1)$ wavelet types $\ell\neq 0$, so
\[
\EE\|\tilde p_J - p_J\|_2^2 \;\lesssim\; \frac{1}{n}\sum_{j=j_0}^{J} 2^{jd}
\;\lesssim\; \frac{2^{Jd}}{n}.
\]

\textbf{Step 4 (Combine and optimize).}
Combining Steps 2--3 gives
\[
\EE\|\tilde p_J - p\|_2^2 \;\lesssim\; 2^{-2Js} + \frac{2^{Jd}}{n}.
\]
Since $\EE\|\tilde p_J^{\dagger}-p\|_2^2\le \EE\|\tilde p_J-p\|_2^2$, the same bound holds for $\tilde p_J^{\dagger}$.
Choosing $2^J\asymp n^{1/(2s+d)}$ balances the two terms and yields
$\EE\|\tilde p_J^{\dagger}-p\|_2^2 \lesssim n^{-2s/(2s+d)}$.
\end{proof}

\subsection{Proof of Lemmas}
\subsubsection{Proof of \Cref{prop:measure.and.density}}
\begin{proof}[Proof of \Cref{prop:measure.and.density}]
    Plug $p=1/2$ in Theorem 4 in \parencite{niles2022minimax} and we finish the proof.
\end{proof}
\subsubsection{Proof of \Cref{lem:wavelet_oper}}
\begin{proof}[Proof of \Cref{lem:wavelet_oper}]
Let $\{\psi_{\lambda}\}_{\lambda\in\Lambda}$ be an orthonormal wavelet basis of $L^2([0,1]^{d})$, and let $\Lambda=\bigsqcup_{j\ge j_0}\Lambda_j$ denote the decomposition of indices by scale. Fix $f\in L^2([0,1]^{d})$, write its wavelet expansion
\[
f \;=\; f_{<j_0} \;+\; \sum_{j\ge j_0} f_j,
\qquad
f_j \;=\; \sum_{\lambda\in\Lambda_j}\alpha(\lambda)\,\psi_{\lambda},
\]
where $f_{<j_0}$ is the coarse (scaling) component.

For each $j\ge j_0$, since $\{\psi_{\lambda}\}_{\lambda\in\Lambda_j}$ is an orthonormal basis at level $j$, Parseval's identity then gives
\begin{equation}\label{eq:parseval_Wj}
\|f_j\|_{L^2([0,1]^{d})}^2
\;=\;
\sum_{\lambda\in\Lambda_j}|\alpha(\lambda)|^2,
\qquad\text{hence}\qquad
\|f_j\|_{L^2([0,1]^{d})}
\;=\;
\Big(\sum_{\lambda\in\Lambda_j}|\alpha(\lambda)|^2\Big)^{1/2}.
\end{equation}
Therefore the left-hand side can be rewritten as
\[
\Bigg(\sum_{j\ge j_0} 2^{-j}\Big(\sum_{\lambda\in\Lambda_j}|\alpha(\lambda)|^2\Big)^{1/2}\Bigg)^2
\;=\;
\Bigg(\sum_{j\ge j_0}2^{-j}\,\|f_j\|_{L^2([0,1]^{d})}\Bigg)^2.
\]
Apply the Cauchy--Schwarz inequality to the sequences
$a_j = 2^{-j}$ and $b_j=\|f_j\|_{L^2([0,1]^{d})}$:
\begin{equation}\label{eq:cs_step}
\sum_{j\ge j_0}2^{-j}\,\|f_j\|_{L^2([0,1]^{d})}
\;\le\;
\Bigg(\sum_{j\ge j_0}2^{-2j}\Bigg)^{1/2}
\Bigg(\sum_{j\ge j_0}\|f_j\|_{L^2([0,1]^{d})}^2\Bigg)^{1/2}.
\end{equation}
Squaring \eqref{eq:cs_step} yields
\begin{equation}\label{eq:cs_step_sq}
\Bigg(\sum_{j\ge j_0}2^{-j}\,\|f_j\|_{L^2([0,1]^{d})}\Bigg)^2
\;\le\;
\Bigg(\sum_{j\ge j_0}2^{-2j}\Bigg)
\Bigg(\sum_{j\ge j_0}\|f_j\|_{L^2([0,1]^{d})}^2\Bigg).
\end{equation}

Next, by the orthogonal decomposition
\[
\|f\|_{L^2([0,1]^{d})}
=
(\|f_{<j_0}\|_{L^2([0,1]^{d})}^2+\sum_{j\ge j_0}\|f_j\|_{L^2([0,1]^{d})}^2)^{\frac{1}{2}}
\;\ge\;
\frac{1}{2} \left(\|f_{<j_0}\|_{L^2([0,1]^{d})} + \left(\sum_{j\ge j_0}\|f_j\|^2_{L^2([0,1]^{d})}\right)^{\frac{1}{2}}\right).
\]
Substituting this bound into \eqref{eq:cs_step_sq} gives
\[
\Bigg(\sum_{j\ge j_0}2^{-2j}\Bigg)^{\frac{1}{2}} \|f_{<j_0}\|_{L^2([0,1]^{d})} + \sum_{j\ge j_0}2^{-j}\,\|f_j\|_{L^2([0,1]^{d})} 
\;\le\;
2\Bigg(\sum_{j\ge j_0}2^{-2j}\Bigg)^{\frac{1}{2}}\,\|f\|_{L^2([0,1]^{d})}.
\]
Finally, combining this with
\[
    \|f\|_{\mathcal B^{-1}_{2,1}([0,1]^d)} = \|f_{<j_0}\|_{L^2([0,1]^{d})} + \sum_{j\ge j_0}2^{-j}\,\|f_j\|_{L^2([0,1]^{d})}
\]
concludes the proof.
\end{proof}

\subsubsection{Proof of Lemma~\ref{lemm:CW2.V.S.W2}}

\begin{proof}[Proof of \Cref{lemm:CW2.V.S.W2}]
For each $z\in\mathcal Z$, let $\pi_Z^*$ be an optimal coupling between
$\PP_Y^z$ and $\QQ_Y^z$ for the cost $\|y-y'\|^2$.
Define a coupling $\pi^*$ on $(\mathcal Y\times\mathcal Z)^2$ via
\[
\pi^*(dy,dz,dy',dz')
:=
\PP_Z(dz)\,\pi_Z^*(dy,dy')\,\delta_Z(dz').
\]
By construction, the first marginal of $\pi^*$ is $\PP$ and the
second marginal is $\QQ$, hence $\pi^*\in\Pi(\PP,\QQ)$.
Under the cost $c\big((y,z),(y',z')\big)=\|y-y'\|^2$, the transport cost of $\pi$ is
\[
\int \|y-y'\|^2\, d\pi^*
=
\int \left(\int \|y-y'\|^2\, d\pi_Z^*(y,y')\right)\PP_Z(dz)
=
\EE_{Z\sim \PP_Z}\!\left[
W_2^2\!\big(\PP(Y\mid Z),\,\QQ(Y\mid Z)\big)
\right].
\]
Since $W_2^2(\PP,\QQ)$ is the infimum of $\int \|y-y'\|^2\,d\pi$ over all
$\pi\in\Pi(\PP,\QQ)$, we conclude that
\[
W_2^2(\PP,\QQ)
\le
\int \|y-y'\|^2\, d\pi^*
=
\EE_{Z\sim \PP_Z}\!\left[
W_2^2\!\big(\PP(Y\mid Z),\,\QQ(Y\mid Z)\big)
\right].
\]
\end{proof}

\subsubsection{Proof of Lemma~\ref{lem:smooth_kan}}
\begin{proof}[Proof of Lemma~\ref{lem:smooth_kan}]
    Since $\phi(y, z) = \|y\|_2^2 - \varphi(y, z)$, then $\phi \in \calC^{s+1}([0,1]^{d_Y+d_Z})$. 
    
    As for $\psi$, note that $\psi(y, z) = \|y\|_2^2 - \varphi^*(y, z)$, and
    \[
        \nabla^2_y \varphi^*(y, z) = \nabla^2_y \varphi( \nabla_y \varphi^{*}(y,z), z)^{-1} \qquad (\star)
    \]
    where the convex conjugate is defined with respect to $y$, i.e., $\varphi^*(y,z) := \sup_{y' \in [0,1]^{d_Y}} \{y^\top y' - \varphi(y', z)\}$.

    By \Cref{lem:convex_lam}, there is a constant $\lambda > 0$, such that
    \[
        \frac{1}{\lambda} I_{d_Y} \preceq \nabla^2_y \varphi(y,z) \preceq \lambda I_{d_Y}\quad \forall y, z \in [0,1]^{d_Y + d_Z}.
    \]
    Therefore, by $(\star)$ and $\varphi \in \calC^{s+1}([0,1]^{d_Y+d_Z})$, we get $\varphi^* \in \calC^{s+1}([0,1]^{d_Y+d_Z})$, so does $\psi$.
\end{proof}

\section{Algorithms}\label{appe:sec:algorithms}

\subsection{Wavelet-based Density Estimation}\label{sec:optimal.transport.estimated.density}

We provide a detailed Algorithm~\ref{alg:bc_wavelet_joint_density} for the wavelet-based density estimation
described in \Cref{defi:wavelet.density.estimator}.

\begin{algorithm}[tbp]
\KwIn{Samples $(Y_i,Z_i)\in[0,1]^{d_Y+d_Z}$ for $i=1,\dots,n$; wavelet family with smoothness $s$; coarse level $J_0 \geq \lfloor\log_2(s/0.18 + 1)\rfloor + 1$.}
\KwOut{Estimated joint density $\widehat f_{Y,Z}$ on $[0,1]^{d_Y+d_Z}$.}

\BlankLine
\textbf{Notation.}
Let $d := d_Y + d_Z$ and $X_i := (Y_i,Z_i)\in[0,1]^d$.
Let $\{\phi_{J_0,k}^{\mathrm{bc}}\}_{k\in\mathcal K_{J_0}}$ be the \emph{boundary-corrected} scaling functions at level $J_0$,
and for each $j\ge J_0$ let $\{\psi_{j,k,\ell}^{\mathrm{bc}}\}_{k\in\mathcal K_j,\ \ell\in\mathcal L}$ be the boundary-corrected wavelets
(where $\ell$ indexes the $2^d-1$ multivariate wavelet types in $d$ dimensions).
Basis functions are supported on $[0,1]^d$ and form an orthonormal basis of $L^2([0,1]^d)$.

\BlankLine
\textbf{Step 1: Choose resolution levels.}\\
Pick $(J_0,J)$ such that $J_0$ fixed small, $J \geq J_0$, and $J \asymp \frac{\log_2(n)}{2s + d_Y + d_Z}$.

\BlankLine
\textbf{Step 2: Compute empirical wavelet coefficients.}\\
\ForEach{$k\in\mathcal K_{J_0}$}{
    $\widehat\alpha_{J_0,k} \leftarrow \frac{1}{n}\sum_{i=1}^n \phi_{J_0,k}^{\mathrm{bc}}(X_i)$\;
}
\For{$j=J_0$ \KwTo $J$}{
  \ForEach{$\ell\in\mathcal L$}{
    \ForEach{$k\in\mathcal K_j$}{
      $\widehat\beta_{j,k,\ell} \leftarrow \frac{1}{n}\sum_{i=1}^n \psi_{j,k,\ell}^{\mathrm{bc}}(X_i)$\;
    }
  }
}

\BlankLine
\textbf{Step 3 (optional): Threshold / shrink detail coefficients.}\\
\For{$j=J_0$ \KwTo $J$}{
  \ForEach{$\ell\in\mathcal L$}{
    \ForEach{$k\in\mathcal K_j$}{
      $\widetilde\beta_{j,k,\ell} \leftarrow \mathcal T(\widehat\beta_{j,k,\ell}; \lambda_j)$\;
    }
  }
}
\If{no thresholding}{
  $\widetilde\beta_{j,k,\ell} \leftarrow \widehat\beta_{j,k,\ell}$ for all $(j,k,\ell)$\;
}

\BlankLine
\textbf{Step 4: Assemble the estimator.}\\
Define, for any $x\in[0,1]^d$,
\[
\widehat f_{Y,Z}(x)
\;:=\;
\sum_{k\in\mathcal K_{J_0}}\widehat\alpha_{J_0,k}\,\phi_{J_0,k}^{\mathrm{bc}}(x)
\;+\;
\sum_{j=J_0}^{J}\ \sum_{\ell\in\mathcal L}\ \sum_{k\in\mathcal K_j}
\widetilde\beta_{j,k,\ell}\,\psi_{j,k,\ell}^{\mathrm{bc}}(x).
\]

\BlankLine
\textbf{Step 5: Enforce density constraints.}\\
\If{nonnegativity flag}{
  $\widehat f_{Y,Z}(x)\leftarrow \max\{\widehat f_{Y,Z}(x),0\}$\;
}

\If{normalization flag}{
  $c \leftarrow \int_{[0,1]^d}\widehat f_{Y,Z}(x)\,dx$\;
  $\widehat f_{Y,Z}(x)\leftarrow \widehat f_{Y,Z}(x)/c$\;
}

\Return{$\widehat f_{Y,Z}$}\;

\caption{Boundary-corrected wavelet density estimator for the joint density of $(Y,Z)$}
\label{alg:bc_wavelet_joint_density}
\end{algorithm}

\subsection{Optimal Transport Problem with Estimated Densities}

Given the estimated joint densities $\widehat{\QQ}$ and $\widehat{\PP}$ of $(Y_i(0), Z_i)$ and $(Y_i(1), Z_i)$
together with the estimated marginal density of $Z_i$, we generate synthetic
observations via a two-step hierarchical procedure.
\begin{enumerate}
    \item We first draw $N_1$ covariates $Z$ from
its marginal distribution.
    \item Conditional on each sampled $Z$, draw outcomes from the corresponding conditoinal distributino $\widehat{\PP}(Y(1)\mid Z)$, $\widehat{\QQ}(Y(1)\mid Z)$ to get $N_2$ obseratino for each group.
\end{enumerate}
For estimation, we first for each $Z$ solve an optimal transport problem. We then average over $Z$ to get the final estimation.

Given the estimated joint densities $\widehat{P}$ and $\widehat{Q}$
of $(Y_i(1), Z_i)$ and $(Y_i(0), Z_i)$, together with an estimate of the
marginal density of $Z$, we generate synthetic data through a hierarchical
sampling scheme designed to mimic the joint structure of the population.
\begin{enumerate}
    \item Draw $N_1$ samples of $Z$ from its estimated marginal distribution.
    \item Conditional on each sampled value of $Z$, draw $N_2$ outcomes from the conditional densities $\widehat{P}(Y(1)\mid Z)$ and $\widehat{Q}(Y(0)\mid Z)$.
\end{enumerate}
We emphasize that these
generated samples are not the original observations, but are Monte Carlo draws from the estimated density.
To estimate the optimal objective value, we solve an optimal transport problem separately at each
sampled $Z$, and average the resulting objective values over the sampled
covariates to obtain the final estimate.
Details are summarized in Algorithm~\ref{alg:example}.

There are a variety of approaches for sampling from an estimated density \parencite{robert1999monte}, including rejection sampling, importance sampling,
Markov chain Monte Carlo (MCMC) methods such as Metropolis--Hastings and
Gibbs sampling.
Algorithm~\ref{alg:example}, we adopt a discretization-based sampler for its favorable practical stability. 
Specifically, we first discretize the support of the estimated
density into a fine grid. The estimated density is then evaluated on this grid
and normalized to form a discrete probability distribution. Samples are subsequently drawn from this grid according to the discrete probabilities.
We remark that this discretization step differs from that used by \parencite{lin2025estimation}, where the discretization is used as part of the estimation procedure and applied to the observed dataset. 
In contrast, here the discretization is used as a post-estimation device for sampling from the already constructed
continuous density estimate. 
As a result, it does not affect the statistical properties of the estimator, but only serves as a practical mechanism for
Monte Carlo approximation of the Wasserstein distance.

\begin{algorithm}[tbp]
\caption{Smooth conditional Wasserstein distance}
\label{alg:example}

\KwIn{i.i.d.\,sample $((Y_i(0), Z_i(0)), 1\leq i \leq n)$ drawn from $P$, i.i.d.\,sample $((Y_j(1), Z_j(1)), 1\leq j \leq m)$ drawn from $Q$}
\KwOut{$\widehat {\Cwass}(\widehat P_n^{\dagger}, \widehat Q_n^{\dagger})$}

\BlankLine
\textbf{Step 1:}\\
Compute the wavelet estimator $\widehat P$ based on $((Y_i(0), Z_i(0)), 1\leq i \leq n)$ using Algorithm~\ref{alg:bc_wavelet_joint_density}. \\
Compute the wavelet estimator $\widehat Q$ based on $((Y_j(1), Z_j(1)), 1\leq j \leq m)$ using Algorithm~\ref{alg:bc_wavelet_joint_density}. 

\BlankLine
\textbf{Step 2:}\\
Compute the wavelet estimator of $\widehat R = (\widehat P_Z + \widehat Q_Z) / 2$.\\
Denote $\widehat P_n^{\dagger} = \widehat P_n^{\dagger}(\diff y, \diff z) = \widehat \PP_Y^z(\diff y) \widehat R(\diff z)$.\\
Denote $\widehat Q_n^{\dagger} = \widehat Q_n^{\dagger}(\diff y, \diff z) = \widehat \QQ_Y^z(\diff y)  \widehat R(\diff z)$.

\BlankLine
\textbf{Step 3:}\\
Sample $(\widetilde Z_\tau, 1 \leq \tau \leq N_Z := \lfloor(n \vee m)\log(n \vee m)\rfloor$ from  $\widehat R$.

\BlankLine
\textbf{Step 4:}\\
\For{$\tau = 1$ \KwTo $N_Z$}{
    Let $z = \tilde Z_\tau$, $N_Y = \lfloor  (n \vee m)^{(d_Y/4) \vee 1}\log(n \vee m) \rfloor$; \\
    Sample $(Y^{\tau}_k(0), 1 \leq k \leq N_Y)$ from $\widehat \PP_Y^{z}$;
    \\
    Sample $(Y^{\tau}_l(1), 1 \leq l \leq N_Y)$ from $\widehat \QQ_Y^{z}$; \\
    Compute 
    \[
        \widehat W_2^2(\widehat P_{\widetilde Z_\tau}, \widehat Q_{\widetilde Z_\tau}) := W_2^2\left(\frac{1}{N_Y} \sum_{k=1}^{N_Y} \delta_{Y^{\tau}_k(0)}, \frac{1}{N_Y} \sum_{l=1}^{N_Y} \delta_{Y^{\tau}_l(1)}\right)
    \]
    using the Python Optimal Transport (\texttt{POT}\footnote{\url{https://pythonot.github.io/}}) library \parencite{flamary2021pot};
    
}

\BlankLine
\textbf{Step 5:}\\
Define
\[
    \widehat {\Cwass}(\widehat P_n^{\dagger}, \widehat Q_n^{\dagger}) := \frac{1}{N_Z}\sum_{\tau=1}^{N_Z}\widehat W_2^2(\widehat P_{\widetilde Z_\tau}, \widehat Q_{\widetilde Z_\tau}).
\]

\Return{$\widehat {\Cwass}(\widehat P_n^{\dagger}, \widehat Q_n^{\dagger})$}
\end{algorithm}

\section{Additional Numerical Experiment}\label{appe:sec:numerical.experiment}

\subsection{Simulation Details of Estimation}

\subsubsection{Wavelet Algorithm Parameters}

\begin{itemize}
    \item \texttt{wavelet = "db4"}: Daubechies-4 wavelet basis, chosen for compact support and good smoothness properties.

    \item \texttt{mode = "periodization"}: boundary handling rule that treats the support as periodic, avoiding boundary artifacts for data supported on $[0,1]^d$.

    \item \texttt{J\_joint = 4}: wavelet resolution level for estimating the joint density $\widehat p_{Y,Z}(y,z)$. 
    Larger values lead to finer resolution (lower bias, higher variance) and constitute the primary smoothing parameter.

    \item \texttt{J\_z = 6}: wavelet resolution level for estimating the marginal density $\widehat p_Z(z)$. 
    A higher level is used since $Z$ is lower dimensional and easier to estimate accurately.
    
    \item \texttt{threshold = None}: no coefficient thresholding is applied, preserving the plug-in estimator form and simplifying theoretical analysis.

    \item \texttt{nonnegativity = True}: enforces nonnegativity of the reconstructed density, preventing oscillations from producing negative values.

    \item \texttt{renormalize = True}: rescales the estimated density to integrate to one, ensuring it is a valid probability density.

     \item \texttt{(i) $N_z = 200,\; N_y = 300$ (location model), (ii) $N_z = 50,\; N_y = 300$ (quadratic model), (iii) $N_z = 100,\; N_y = 600$ (scale model)}: number of grid points used to discretize the supports of $Z$ and $Y$ when numerically evaluating the conditional Wasserstein functional.
\end{itemize}

\subsubsection{Additional Estimation Result}
In Figure~\ref{fig:convergence_rateII}, we show the plots of convergence rate corresponding to Figure~\ref{fig:convergence_rate_loglog}.
\begin{figure*}[tbp]
        \centering
        \begin{minipage}{0.32\textwidth}
                \centering
                \includegraphics[clip, trim = 0cm 0cm 0cm 0.75cm, width = 1\textwidth]{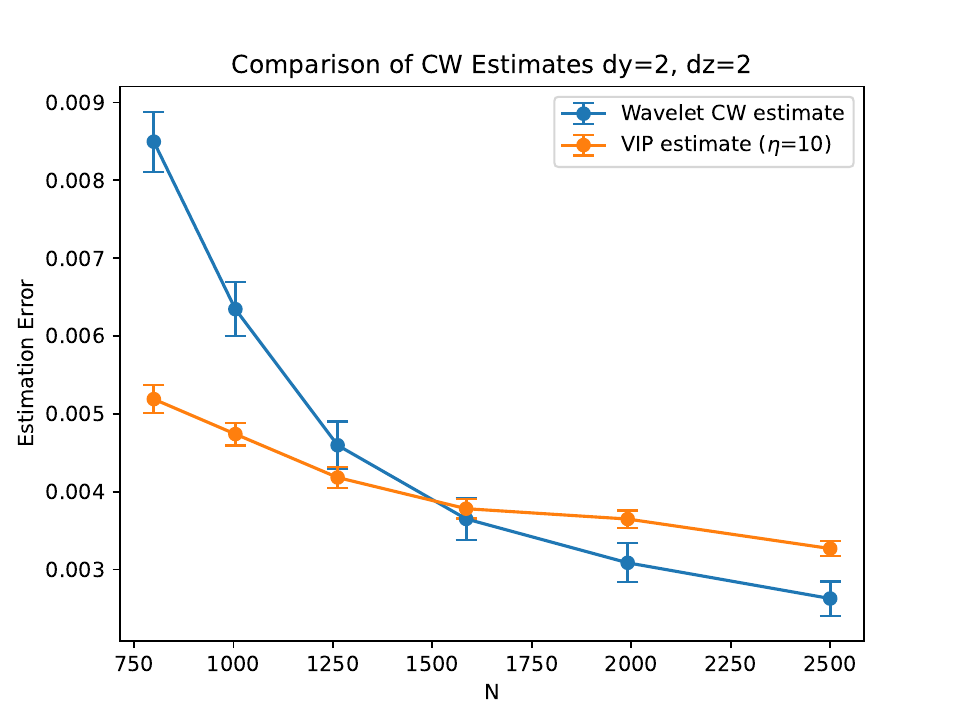}
                \subcaption{Location model}
        \end{minipage}
         \begin{minipage}{0.32\textwidth}
                \centering
                \includegraphics[clip, trim = 0cm 0cm 0cm 0.75cm, width = 1\textwidth]{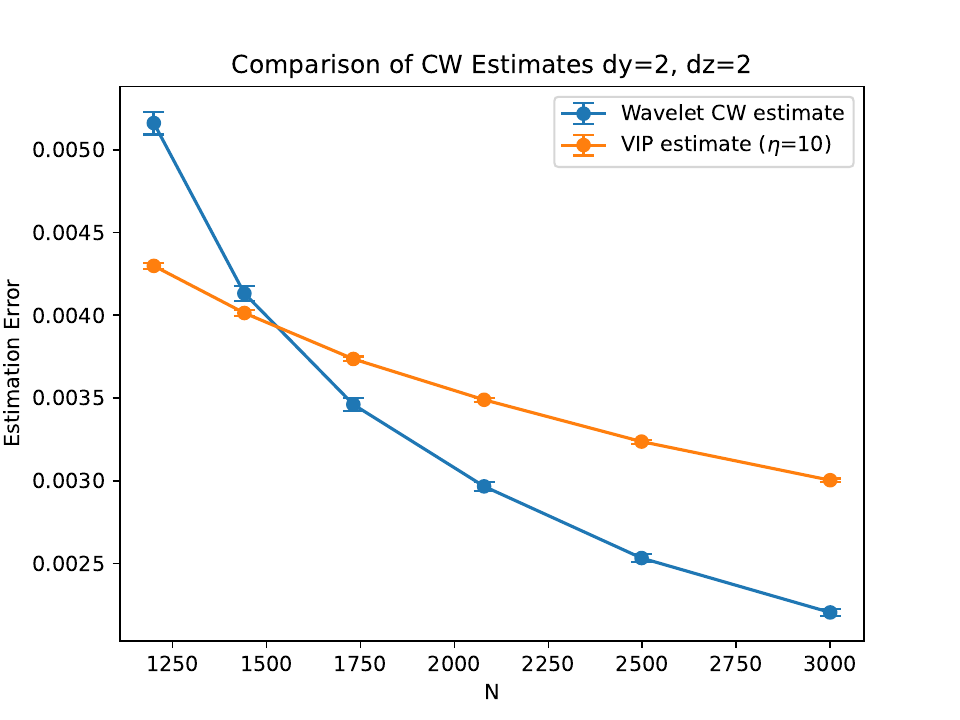}
               \subcaption{Quadratic model} 
        \end{minipage}
         \begin{minipage}{0.32\textwidth}
                \centering
                \includegraphics[clip, trim = 0cm 0cm 0cm 0.75cm, width = 1\textwidth]{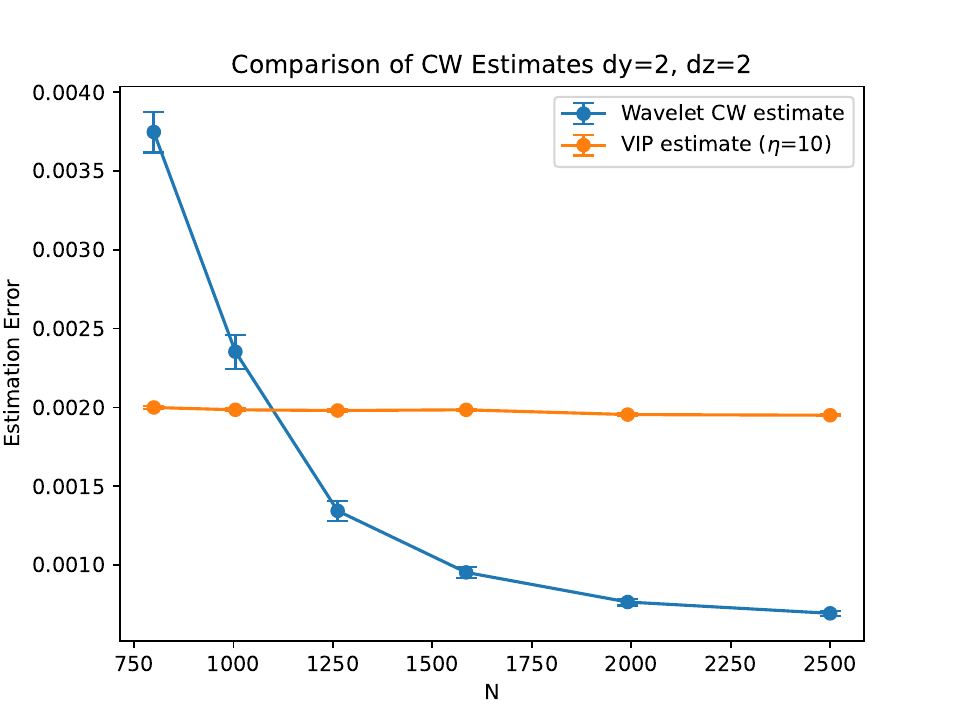}
                \subcaption{Scale model}
        \end{minipage}
        \caption{Plots of estimation error comparison of our method (\textit{Wavelet CW estimate}) and \parencite{lin2025tightening} (\textit{VIP estimate} with $\eta = 10$) with $d_Y=2$, $d_Z=2$. The estimation error is defined by $\EE[|V_{\cc} - \widehat V_{\cc}|]$. The mean error curve and the corresponding 90\% standard-error confidence bands are computed by aggregating results over 300 Monte Carlo repetitions.
        }
        \label{fig:convergence_rateII}
\end{figure*}

\subsection{Simulation Details of Inference}\label{appe:sec:numerical.experiment.simulation.details}

\subsubsection{Data Generation Mechanism} 

In all scenarios,
\[
Y(w)\mid Z=z \sim \mathcal{N}\big(\mu_w(z), \Sigma_w\big), \quad w\in\{0,1\}.
\]

\paragraph{Scenario 1: $d_Y=2,\; d_Z=1$}
\[
\mu_0(z)=
\begin{pmatrix}0.35\\0.55\end{pmatrix}
+
\begin{pmatrix}0.10\\-0.05\end{pmatrix}z,
\]
\[
\mu_1(z)=\mu_0(z)+
\begin{pmatrix}0.12\\-0.08\end{pmatrix}
+
\begin{pmatrix}0.10\\0.02\end{pmatrix}z.
\]
\[
\Sigma_0=
\begin{pmatrix}
0.05^2 & 0\\
0 & 0.03^2
\end{pmatrix},\qquad
\Sigma_1=
\begin{pmatrix}
0.07^2 & 0.01\cdot0.07\cdot0.04\\
0.01\cdot0.07\cdot0.04 & 0.04^2
\end{pmatrix}.
\]

\paragraph{Scenario 2: $d_Y=2,\; d_Z=2$ (default)}
\[
\mu_0(z)=
\begin{pmatrix}0.35\\0.55\end{pmatrix}
+
\begin{pmatrix}
0.20 & -0.10\\
0.05 & 0.15
\end{pmatrix}z,
\]

\[
\mu_1(z)=\mu_0(z)+
\begin{pmatrix}0.12\\-0.08\end{pmatrix}
+
\begin{pmatrix}
0.10 & 0.04\\
0.02 & 0.06
\end{pmatrix}z.
\]
\[
\Sigma_0=
\begin{pmatrix}
0.05^2 & 0\\
0 & 0.03^2
\end{pmatrix},\qquad
\Sigma_1=
\begin{pmatrix}
0.07^2 & 0.01\cdot0.07\cdot0.04\\
0.01\cdot0.07\cdot0.04 & 0.04^2
\end{pmatrix}.
\]

\paragraph{Scenario 3: $d_Y=3,\; d_Z=2$}
\[
\mu_0(z)=
\begin{pmatrix}0.35\\0.55\\0.45\end{pmatrix}
+
\begin{pmatrix}
0.20 & -0.10\\
0.05 & 0.15\\
-0.12 & 0.08
\end{pmatrix}z,
\]
\[
\mu_1(z)=\mu_0(z)+
\begin{pmatrix}0.12\\-0.08\\0.05\end{pmatrix}
+
\begin{pmatrix}
0.10 & 0.04\\
0.02 & 0.06\\
-0.03 & 0.01
\end{pmatrix}z.
\]
\[
\Sigma_0=
\begin{pmatrix}
0.05^2 & 0 & 0\\
0 & 0.03^2 & 0\\
0 & 0 & 0.04^2
\end{pmatrix},\quad
\Sigma_1=
\begin{pmatrix}
0.07^2 & 0.01\cdot0.07\cdot0.04 & 0\\
0.01\cdot0.07\cdot0.04 & 0.04^2 & 0\\
0 & 0 & 0.05^2
\end{pmatrix}.
\]

\subsubsection{Wavelet Algorithm Parameters}

\begin{itemize}
    \item \texttt{wavelet = "db4"}: Daubechies-4 wavelet basis, chosen for compact support and good smoothness properties.

    \item \texttt{mode = "periodization"}: boundary handling rule that treats the support as periodic, avoiding boundary artifacts for data supported on $[0,1]^d$.

    \item \texttt{J\_joint = 4}: wavelet resolution level for estimating the joint density $\widehat p_{Y,Z}(y,z)$. 
    Larger values lead to finer resolution (lower bias, higher variance) and constitute the primary smoothing parameter.

    \item \texttt{J\_z = 6}: wavelet resolution level for estimating the marginal density $\widehat p_Z(z)$. 
    A higher level is used since $Z$ is lower dimensional and easier to estimate accurately.
    
    \item \texttt{threshold = None}: no coefficient thresholding is applied, preserving the plug-in estimator form and simplifying theoretical analysis.

    \item \texttt{nonnegativity = True}: enforces nonnegativity of the reconstructed density, preventing oscillations from producing negative values.

    \item \texttt{renormalize = True}: rescales the estimated density to integrate to one, ensuring it is a valid probability density.

     \item $N_z = 120,\; N_y = 120$: number of grid points used to discretize the supports of $Z$ and $Y$ when numerically evaluating the conditional Wasserstein functional.
\end{itemize}

\subsection{Inference}\label{appe:sec:numerical.experiment.inference}

In \Cref{fig:asymptotic.distribution.dy2dz1}, we illustrate the asymptotic distribution of our estimator 
$\Cwass_2^2(\widehat{\PP}_n^\dagger,\widehat{\QQ}_n^\dagger)$ for $d_Y=2$, $d_Z=1$. 
In \Cref{fig:asymptotic.distribution.dy3dz2}, we present the corresponding asymptotic distribution for $d_Y=3$, $d_Z=2$.
The results are consistent with those in \Cref{sec:empirical.inference}: larger sample sizes decrease the bias and improve the normality.

\begin{figure*}[tbp]
        \centering
        \begin{minipage}{0.32\textwidth}
                \centering
                \includegraphics[clip, trim = 0cm 0cm 0cm 0.75cm, width = 1\textwidth]{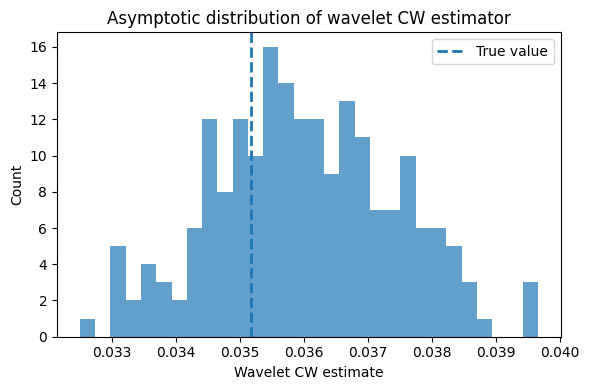}
                \subcaption{$n = 1000$}
        \end{minipage}
         \begin{minipage}{0.32\textwidth}
                \centering
                \includegraphics[clip, trim = 0cm 0cm 0cm 0.75cm, width = 1\textwidth]{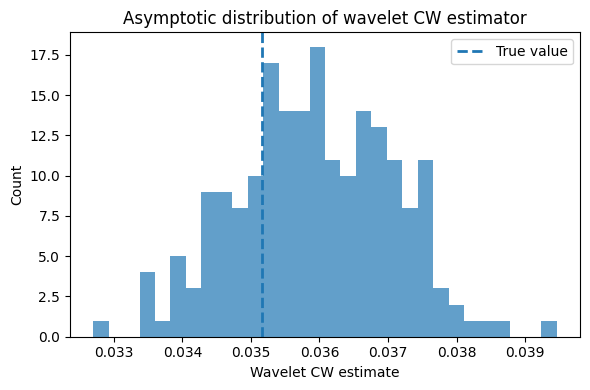}
               \subcaption{$n = 2000$} 
        \end{minipage}
         \begin{minipage}{0.32\textwidth}
                \centering
                \includegraphics[clip, trim = 0cm 0cm 0cm 0.75cm, width = 1\textwidth]{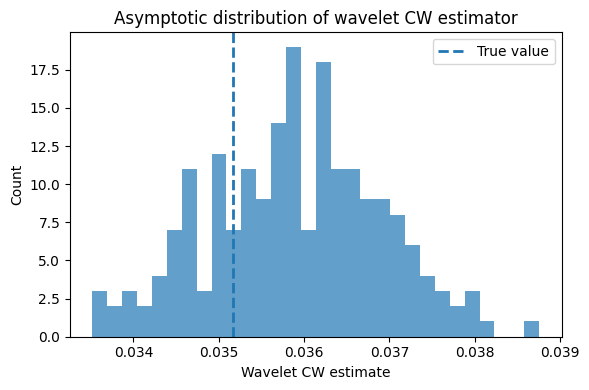}
                \subcaption{$n = 3000$}
        \end{minipage}
        \caption{Histogram of the proposed estimator with the true value marked by a vertical dashed line. 
        Here the default setting $d_Y=2$, $d_Z=1$ is adopted. The results are aggregated over $200$ Monte Carlo repetitions.
        }
        \label{fig:asymptotic.distribution.dy2dz1}
\end{figure*}

\begin{figure*}[tbp]
        \centering
        \begin{minipage}{0.32\textwidth}
                \centering
                \includegraphics[clip, trim = 0cm 0cm 0cm 0.75cm, width = 1\textwidth]{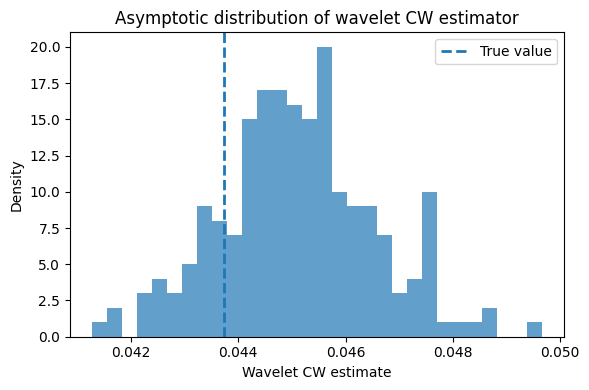}
                \subcaption{$n = 4000$}
        \end{minipage}
         \begin{minipage}{0.32\textwidth}
                \centering
                \includegraphics[clip, trim = 0cm 0cm 0cm 0.75cm, width = 1\textwidth]{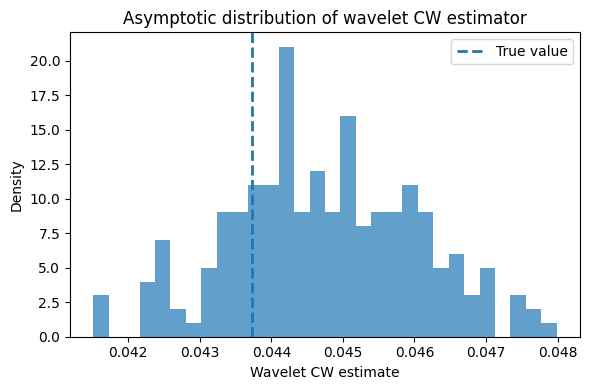}
               \subcaption{$n = 5000$} 
        \end{minipage}
         \begin{minipage}{0.32\textwidth}
                \centering
                \includegraphics[clip, trim = 0cm 0cm 0cm 0.75cm, width = 1\textwidth]{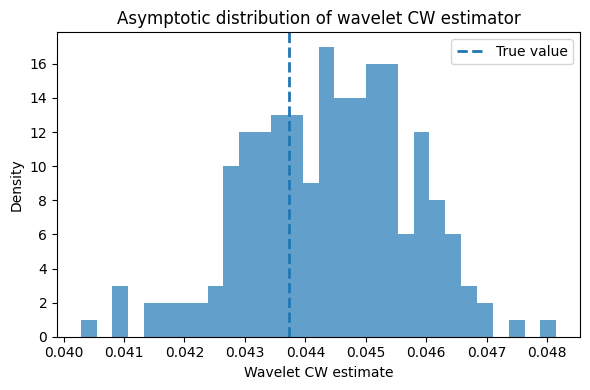}
                \subcaption{$n = 6000$}
        \end{minipage}
        \caption{Histogram of the proposed estimator with the true value marked by a vertical dashed line. 
        Here the default setting $d_Y=3$, $d_Z=2$ is adopted. The results are aggregated over $200$ Monte Carlo repetitions.
        }
        \label{fig:asymptotic.distribution.dy3dz2}
\end{figure*}


\end{document}

%% file: sty.tex
\def\EE{\mathbb{E}}

\def\RR{\mathbb{R}}

\def\calB{\mathcal{B}}
\def\calC{\mathcal{C}}

\def\calI{\mathcal{I}}

\def\calK{\mathcal{K}}

\def\calN{\mathcal{N}}

\def\calP{\mathcal{P}}

\def\calS{\mathcal{S}}

\def\calX{\mathcal{X}}
\def\calY{\mathcal{Y}}
\def\calZ{\mathcal{Z}}

\def\1{\mathbbm{1}}
\def\var{\mathsf{Var}}

\usepackage{booktabs}
\usepackage{adjustbox}
\usepackage{multirow}
\usepackage{listings}

\usepackage{tikz}

\usepackage{xspace}

\definecolor{myblue}{rgb}{.8, .8, 1}
\definecolor{mathblue}{rgb}{0.2472, 0.24, 0.6} 
\definecolor{mathred}{rgb}{0.6, 0.24, 0.442893}
\definecolor{mathyellow}{rgb}{0.6, 0.547014, 0.24}

\usepackage{enumitem}